\newtheorem{theorem}{Theorem}[section]     
\newtheorem{lemma}[theorem]{Lemma}
\newtheorem{proposition}[theorem]{Proposition}
\newtheorem{corollary}[theorem]{Corollary}
\newtheorem{remark}[theorem]{Remark} 
\DeclareMathOperator{\Plax}{Plax}
\DeclareMathOperator{\Styl}{Styl}
\DeclareMathOperator{\Supp}{Supp}
\DeclareMathOperator{\Part}{Part}
\DeclareMathOperator{\evac}{evac}
\DeclareMathOperator{\rk}{rk}
\newcommand{\N}{\mathbb N}
\newcommand{\C}{\mathcal C}
\newcommand{\p}{\mathbb P}
\newcommand{\D}{\mathbf D}
\title{The stylic monoid}
\author{A. Abram}
\address{Antoine Abram,
D\'epartement de math\'ematiques, Universit\'e du Qu\'ebec \`a Montr\'eal}
\email{abram.antoine@courrier.uqam.ca}
\author{C. Reutenauer}
\address{Christophe Reutenauer,
D\'epartement de math\'ematiques, Universit\'e du Qu\'ebec \`a Montr\'eal}
\email{Reutenauer.Christophe@uqam.ca}
\date{\today}
\begin{document}

\keywords{plactic monoid, stylic monoid, tableaux, partitions, standard immaculate tableaux, evacuation, $J$-trivial, $J$-order}

\begin{abstract} The free monoid $A^*$ on a finite totally ordered alphabet $A$ acts at the left on columns, by Schensted left insertion. This defines a finite monoid, denoted $\Styl(A)$ and called the stylic monoid. It is canonically a quotient of the plactic monoid. Main results are: the cardinality of $\Styl(A)$ is equal to the number of partitions of a set on $|A|+1$ elements. We give a bijection with so-called $N$-tableaux, similar to Schensted's algorithm, explaining this fact. Presentation of $\Styl(A)$: it is generated by $A$ subject to the plactic (Knuth) relations and the idempotent relations $a^2=a$, $a\in A$. The canonical involutive anti-automorphism on $A^*$, which reverses the order on $A$, induces an involution of $\Styl(A)$, which similarly to the corresponding involution of the plactic monoid, may be computed by an evacuation-like operation (Schützenberger involution on tableaux) on so-called standard immaculate tableaux (which are in bijection with partitions). The monoid $\Styl(A)$ is $J$-trivial, and the $J$-order of $\Styl(A)$ is graded: the co-rank is given by the number of elements in the $N$-tableau. The monoid $\Styl(A)$ is the syntactic monoid for the the function which associates to each word $w\in A^*$ the length of its longest strictly decreasing subword.
\end{abstract}

\maketitle

\subjclass{20M05, 20M20, 20C30, 05E10, 06F05}

\tableofcontents

\section{Introduction}
The plactic monoid is a fundamental object in combinatorics, representation theory, and algebra. It originates in a bijection 
of Schensted \cite{S}, often called the {\it Robinson-Schensted-Knuth correspondence}.
Let $A$ be a totally ordered finite alphabet $A$. The Schensted bijection maps each word $w\in A^*$ onto a 
pair $(P(w),Q(w))$, where $P(w)$ is a semi-standard Young tableau on $A$, and $Q(w)$ is standard Young tableau on $\{1,2,\ldots, n\}$ ($n$ is 
the length of $w$), both tableaux having the same shape.
It turns out 
that the condition $P(w)=P(w')$ defines a congruence on the free monoid $A^*$. This congruence was called the {\it plactic 
congruence} by Lascoux and Sch\"utzenberger, and they studied in \cite{LS1} the corresponding quotient monoid $A^*/
{\equiv_{plax}}$, called the {\it plactic monoid}. This monoid has a cubic presentation, given by Knuth \cite{K}, with set of 
generators $A$, and relations called 
the {\it plactic relations}. A survey on the plactic monoid and its applications is given by 
Lascoux, Leclerc and Thibon (Chapter 5 of Lothaire's book \cite{LLT}).

The plactic monoid has another natural finite generating set, the set of {\it columns}. A column is a strictly decreasing word.
With this generating set, it has a quadratic presentation, which turns out to be confluent \cite{BCCL,CGM} (note that the 
standard presentation is not confluent \cite{O}). 

Columns play a special role in the plactic monoid, which may be very deep as is seen in the first section of \cite{LS2}. 
Clearly, the first column of $P(w)$ depends only on the plactic class of $w$. In that way, one obtains by left multiplication 
an action of the plactic monoid on the finite set of columns. We call {\it stylic monoid} the finite monoid of endofunctions of 
this set obtained by this action (for the teminology, we use the Greek word for columns). Clearly, this monoid is a finite 
quotient of the plactic monoid.

Note that in the literature, one finds a class of monoids called {\it partition monoids}, see \cite{HR}. They are related to the 
Temperley-Lieb algebra, and different from the stylic monoids.

The two first main results give the cardinality of this monoid, and a presentation of it (Theorem \ref{main}). 
Let $n$ be the cardinality of $A$. Then the cardinality of $\Styl(A)$ (the stylic monoid on $A$) is equal to the number of partitions of a set with $n+1$ 
elements, the Bell number $B_{n+1}$. Moreover, the presentation on the set of generators $A$ is obtained by adding to 
the plactic relations the {\it idempotent relations} $a^2=a$ for each generator $a\in A$.

In course of the proof, we establish a bijection between  $\Styl(A)$ and a set of semi-standard tableaux that we call {\it 
$N$-tableaux}\,: they are obtained by the condition that the rows strictly increase, and that each row contains the next one.
The bijection is a variant of Schensted right insertion.

Next, we study a natural involution on $\Styl(A)$. It is obtained from the anti-automorphism $\theta$ of the free monoid 
$A^*$ which reverses words, and reverses the alphabet (for example, $123233\mapsto 112123$, $A=\{1,2,3\}$). It induces 
an anti-automorphism of both the plactic monoid and the stylic monoid, as is seen on the plactic relations and idempotent 
relations. Concerning the plactic monoid, there is a remarkable direct construction on tableaux of this involution by 
Sch\"utzenberger, called {\it evacuation}.

This leads us to a similar construction for the stylic monoid. First, it is easy to see that $N$-tableaux are bijectively 
represented by partitions of subsets of $A$. Such a partition may be represented by an increasing labeling of a lower ideal 
of $\p^2$, the latter being ordered as is shown in Figure \ref{order-preceq}. This allows to mimick the classical theory for 
standard tableaux: tableaux, skew-tableaux, jeu de taquin, evacuation. The third main result is that this modified 
evacuation corresponds to the involution (Theorem \ref{evac}). The proof is nontrivial, but we followed the classical case (skew diagrams with a hole \cite{Sc2}), 
as is shown in Sagan's book \cite{S}, with the help of Fomin's growth diagrams, which may be extended to our case: 
partitions are replaced by compositions, appropriately ordered. We use a notion that appeared previously in the literature: 
composition tableaux of \cite{HLMvW,LMvW} (with one condition removed), and more precisely, standard immaculate tableaux \cite{BBSSZ} (see also \cite{BBSSZ2}, \cite{C},  \cite{G}, \cite{AHM}, and \cite{NTT}).

Next, we prove a semigroup-theoretical property of the stylic monoid: it is $J$-trivial. This follows from the action on 
columns, and its order properties, once columns are naturally ordered. It is well-known that $J$-trivial monoids inherit the 
$J$-order: $x\leq_Jy$ if $x$ is in the two-sided ideal generated by $y$. The fourth main result is that in the stylic monoid, 
the $J$-order is graded (Theorem \ref{graded}). For the proof of this, we define the left insertion of a letter in an $N$-tableau, which corresponds 
to multiplication at the left in the monoid. Unlike Schensted left and right insertion, which are symmetric, the left and right 
insertion into $N$-tableaux are completely asymmetric. The $J$-order of the stylic monoid induces an order on set 
partitions, which seems new; in particular, the height of this graded poset is quadratic (unlike the usual refinement order of partitions, whose height is linear).

The fifth main result is an automata-theoretic result: the stylic monoid is syntactic with respect to the function which associates to each word the length of its longest strictly decreasing subsequence, equivalently  by Schensted's theorem, the length of the first column of its $P$-tableau (Theorem \ref{syntac}). 

We extend the methods to prove this result to give, in the Appendix, a proof of a statement given without proof by Lascoux and Sch\"utzenberger \cite{LS1}: the plactic monoid is syntactic with respect to the function which associates to each word the shape of its $P$-tableau (Theorem \ref{plactic-synt}).

We give also some order-theoretic properties of the action on columns, and as an application, a new proof of the quadratic presentation of the plactic monoid generated by columns, mentioned at the beginning of the introduction (Theorem \ref{quadr-pres}, due to \cite{BCCL,CGM}).

{\bf A remark} about terminology, notations and abuse of language: a word $a_1\cdots a_n, a_i\in A$, where $A$ is a totally ordered alphabet, is called {\it increasing} (resp. {\it strictly increasing}) if $a_1\leq \cdots\leq a_n$ (resp. $a_1< \cdots<a_n$). Similarly for {\it decreasing}.

We use the notion of {\it columns}, which are 
considered simultaneously as Young tableaux, as subsets of $A$, and as strictly decreasing words on $A$. 
We find this more convenient than introducing three different notations.

\section{Schensted insertions}\label{insert}

Let $A$ be a totally ordered finite {\it alphabet} (whose elements are called {\it letters}) and denote by $A^*$ the set of {\it words} on $A$, which is the {\it free 
monoid} freely generated by $A$.

In this article, we call a {\it tableau} what is called usually a {\it semi-standard Young tableau}; that is, a finite lower order ideal (that is, a finite subset $E\subset \N^2$ such that $x\leq y$ and $y\in E$ implies $x\in E$)
of the poset $\N^2$, ordered naturally, together with an increasing mapping into $A$, such that the restriction of this mapping to 
each subset with given $x$-coordinate is injective. A tableau is usually represented as in Figure \ref{tab}. The 
conditions may be expressed by saying that the letters in $A$ are weakly increasing from left to right in each row, and 
strictly 
increasing from the bottom to top in each column.

We call {\it support} of a word $w$, and denote it by $\Supp(w)$, the set of letters appearing in $w$. Similarly for the 
support of a tableau, denoted likewise.

Call {\it column} a tableau with only one column, and {\it row} a tableau with only one row. 
One may see a column
as a subset of $A$, and a row as a multiset of elements of $A$. We shall use therefore the symbol $\cup$ to express 
union of columns, and of rows (for rows, it is the multiset union). The empty column (resp. row) is denoted by $\emptyset$.

Another useful way to view columns is as {\it decreasing word} (a word whose letters decrease strictly from left to right).

We define now the {\it column insertion}. Let $\gamma$ be a column, viewed here as a subset 
of $A$, and let $x\in A$. There are two cases: if $\forall y\in\gamma, x>y$, then define $\gamma'= 
\gamma\cup x$. Otherwise, let $y$ be the smallest element in $\gamma$ with $y\geq x$; then define $
\gamma'=(\gamma\setminus y)\cup x$. Then $\gamma'$ is the column obtained by {\it column insertion of $x$ into} $
\gamma$, and in the second case, $y$ is said to be {\it bumped}.

%
%
%

One defines the {\it  column insertion of $x\in A$ into a tableau} $T$ recursively as follows: insert $x$ 
into the first column (the leftmost); in the case no element is bumped, stop; otherwise insert the bumped element in the 
second column, and so on.

Finally, given a word $w=a_1\cdots a_n$ on $A$, and a tableau $T$, one defines the  {\it column insertion of 
$w$ into} $T$ recursively by inserting $a_n$ into $T$, then $a_{n-1}$ into the tableau obtained, and so on.

The {\it  insertion into a row of} $x\in A$ is defined similarly: exchange $>$ and $\geq$ in the definition of the column insertion (for a multiset $E$ containing $y$, $E\setminus y$ means that one $y$ is removed from $E$).

The {\it  row insertion in a tableau} is defined similarly to column insertion, by using row insertion and starting from the 
first row (the one with $y$-coordinate 0).
 
Similarly, the {\it row insertion of a word $w$ into} $T$ is obtained recursively by row insertions, starting with $a_1$, then $a_2$ 
and so on.

\begin{figure}
\begin{ytableau}
 d\\
b&  b\\
a&a&c \end{ytableau}
\caption{A tableau}\label{tab}
\end{figure}

A fundamental result of Schensted \cite{S} is that inserting a word $w$ into the empty tableau gives the same 
tableau, by column insertion, or by row insertion. The resulting tableau is denoted by $P(w)$. See \cite{S}, or \cite[Chapter 3]{Sa}, for details.

It follows that for each words $u,v$,
$P(uv)$ is equal to the tableau obtained by column insertion of $u$ into $P(v)$, and also by row insertion of $v$ into $P(u)$.

Another fundamental result of Schensted \cite{S} states that the maximal length of a strictly decreasing subsequence 
of the word $w$ is equal to 
the number of rows of the tableau $P(w)$. Similarly, the maximal length of a weakly increasing subsequence of $w$ is 
equal to the number of columns of $P(w)$.

\section{The plactic monoid}\label{plactic}

The condition $P(u)=P(v)$ is a monoid congruence  on the free monoid, as follows from the previous section. This 
congruence was called the {\it plactic congruence}, denoted $\equiv_{plax}$, and the quotient monoid $\Plax(A)$ was 
called the {\it plactic monoid} by Lascoux and Sch\"utzenberger \cite{LS1}. It follows from the work of Knuth \cite{K} 
that the plactic congruence is generated by the relations
$$
bac \equiv_{plax} bca, \quad acb\equiv_{plax}cab, \quad baa\equiv_{plax}aba, \quad bba\equiv_{plax}bab,
$$
for all choices of letters $a<b<c$ in the first two relations, and for all choices of letters $a<b$ in the two others.

By definition, the plactic monoid may be identified with the set of tableaux on $A$, and the surjective monoid homomorphism from $A^*$ into $Plax(A)$ is therefore denoted $P$.

Define for each tableau $T$ its {\it row-word} to be the word, denoted $RW(T)$, obtained by reading its rows from left to right, starting with the row of largest 
$y$-coordinate; for example the row-word of the tableau in Figure \ref{tab} is $dbbaac$. Similarly, its column-word, denoted $CW(T)$, is obtained by 
reading the columns from left to right, each column being read by starting with the box with highest $y$-coordinate; in the figure, it is $dbabac$.

In particular, the row-word of a column $\gamma$ is a strictly decreasing word, equal to its column-word. We often identify $\gamma$ with this word.

It is a well-known result that for each tableau $T$, one has

$$
T=P(RW(T))=P(CW(T)),
$$
and thus
$$
RW(T)\equiv_{plax}CW(T).
$$
Moreover, for any word $u$,
$$
u\equiv_{plax}RW(P(u)).$$
See \cite[Lemma 3.6.5]{Sa}, \cite[Th. A1.1.6]{St}, \cite[Theorem 5.2.5 and Problem  5.2.4]{LLT}.

\section{An action on columns}\label{Action}

Denote by $\mathcal C(A)$ the set of columns on $A$. We define a left action of $A^*$ on $\mathcal C(A)$, denoted $u\cdot \gamma$, for each $u\in A^*$ and each 
column $\gamma$. Since $A^*$ is the free monoid on $A$, it is enough to define the action for each letter $a\in A$. 
Define 
$$a\cdot \gamma=\gamma'$$
if $\gamma'$ is obtained from $\gamma$ by  column insertion of $a$ into $\gamma$.

\begin{proposition}\label{first}
Let $\gamma$ be a column and $w$ be a word. Then $w\cdot \gamma$ is the first column of $P(wRW(\gamma))$, which is obtained by  row insertion of 
$RW(\gamma)$ into $P(w)$.
\end{proposition}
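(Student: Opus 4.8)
The plan is to prove the two assertions separately, beginning with the one that is essentially immediate. The claim that $P(w\,r(\gamma))$ is obtained by row insertion of $r(\gamma)$ into $P(w)$ follows directly from the Schensted identity recorded above: for any words $u,v$, the tableau $P(uv)$ is obtained by row insertion of $v$ into $P(u)$. Taking $u=w$ and $v=r(\gamma)$ yields exactly this clause, so no further work is needed for it.

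For the first assertion I would instead use the other form of the same identity, namely that $P(uv)$ is obtained by column insertion of $u$ into $P(v)$. With $u=w$ and $v=r(\gamma)$, this says that $P(w\,r(\gamma))$ is the column insertion of $w$ into $P(r(\gamma))$. Since $\gamma$ is a column, its row-word $r(\gamma)$ is the strictly decreasing word representing it, and the identity $T=P(r(T))$ applied to $T=\gamma$ gives $P(r(\gamma))=\gamma$. Hence $P(w\,r(\gamma))$ is obtained by column-inserting $w=a_1\cdots a_n$ into the single column $\gamma$, inserting letters from right to left: first $a_n$, then $a_{n-1}$, and so on.

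The key observation, which I would isolate as the heart of the argument, is that column insertion of a single letter $a$ into an arbitrary tableau $T$ modifies its first column exactly in the way prescribed by the action, i.e.\ the first column of the result is $a\cdot(\text{first column of }T)$. This is immediate from the recursive definition of column insertion: one first inserts $a$ into the leftmost column, producing precisely $a\cdot(\text{first column})$, and any bumped letter is then inserted into the second column and thereafter propagates only rightward, never returning to alter the first column. I would then iterate this observation along the letters of $w$. Writing $T_0=\gamma$ and letting $T_i$ be the tableau obtained after inserting $a_{n-i+1}$, the first column of $T_i$ equals $a_{n-i+1}\cdot(\text{first column of }T_{i-1})$; composing these equalities and using associativity of the left action shows that the first column of $T_n=P(w\,r(\gamma))$ is $a_1\cdot(a_2\cdots(a_n\cdot\gamma))=w\cdot\gamma$, as desired.

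The auxiliary facts (that $P$ of a strictly decreasing word is a single column, and that bumping during column insertion only moves rightward) are standard, so the only genuine content lies in the observation about the first column. I do not expect a serious obstacle here; the single point that requires care is simply confirming that the insertion process never revisits the first column after its initial step, so that the first column of the output is genuinely a function of the first column of the input alone.
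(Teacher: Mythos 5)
Your proposal is correct and follows essentially the same route as the paper's proof: both clauses rest on the two forms of Schensted's identity together with $P(r(\gamma))=\gamma$, exactly as in the paper. The only difference is that you explicitly unpack the step the paper dispatches with ``it follows from the definitions'' --- namely that column insertion of a letter changes the first column precisely according to the action, with bumps propagating only rightward --- which is a welcome clarification but not a different argument.
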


\begin{proof} 
$P(wRW(\gamma))$ is the tableau obtained by  column insertion of $w$ into $P(RW(\gamma))=\gamma$ (see Section \ref{insert}). It follows from the definitions of column insertion and the action on columns that its first column is precisely $w\cdot\gamma$. But $P(wRW(\gamma))$ is also the tableau obtained by row insertion of $RW(\gamma)$ into $P(w)$, see Section \ref{insert}.
\end{proof}

For a column $\gamma$, and a letter $x$, define $\gamma_x=\{y\in\gamma \mid y<x\}$ and $\gamma^x=\{y\in\gamma \mid y>x\}$.

\begin{lemma}\label{idemp} Let $\gamma$ be a column and $x$ be a letter.

(o) $x\cdot \gamma$ contains $x$.

(i) If $\gamma$ contains $x$, then $x\cdot\gamma=\gamma$. 

(ii) One has $(x\cdot \gamma)_x=\gamma_x$.
\end{lemma}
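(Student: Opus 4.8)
The plan is to unwind the definition of column insertion directly from the formula $a \cdot \gamma = \gamma'$ given in Section~\ref{Action}, handling the three cases that arise. Recall that inserting $x$ into $\gamma$ proceeds as follows: if every $y \in \gamma$ satisfies $x > y$, then $\gamma' = \gamma \cup x$; otherwise, if $y$ is the smallest element of $\gamma$ with $y \geq x$, then $\gamma' = (\gamma \setminus y) \cup x$. The statement contains three assertions (o), (i), (ii), and I would verify each by case analysis on which branch of the insertion rule applies.

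For part (o), in the first case $\gamma' = \gamma \cup x$ manifestly contains $x$; in the second case $\gamma' = (\gamma \setminus y) \cup x$ also contains $x$, and since $y \geq x$ and elements of a column are distinct, the removal of $y$ does not delete the freshly added $x$ (indeed if $y = x$ then $\gamma$ already contained $x$). So (o) is immediate in both branches. For part (i), suppose $x \in \gamma$. Then we are in the second case (taking $y = x$, which is the smallest element $\geq x$ since $x$ itself lies in $\gamma$), and $\gamma' = (\gamma \setminus x) \cup x = \gamma$. I would just note that no element strictly between any candidate and $x$ can be smaller than $x$, so $y = x$ is forced.

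For part (ii), the claim is $(x \cdot \gamma)_x = \gamma_x$, i.e. the elements of the inserted column that are strictly less than $x$ coincide with those of $\gamma$. In the first case, $x \cdot \gamma = \gamma \cup x$; intersecting with $\{y : y < x\}$ removes the newly added $x$ and leaves $\gamma_x$ unchanged. In the second case, $x \cdot \gamma = (\gamma \setminus y) \cup x$ with $y \geq x$; the bumped element $y$ lies in $\gamma^x$ or equals $x$, hence $y \notin \gamma_x$, so deleting $y$ does not affect the part below $x$, and the added $x$ is again removed upon intersecting with $\{y : y < x\}$. Thus $(x \cdot \gamma)_x = \gamma_x$ in both branches.

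I expect the only subtle point, and the step warranting the most care, to be confirming in part~(ii) that the bumped element $y$ never lies strictly below $x$: this is exactly the defining property $y \geq x$ of the bumped element, so the argument is clean, but one should state it explicitly to rule out the possibility that insertion disturbs the sub-$x$ portion of the column. No deep machinery is needed here; the whole lemma follows from a careful reading of the insertion rule, and the proof is essentially a verification that the two branches behave as claimed.
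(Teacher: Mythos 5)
Your proof is correct and takes the same route as the paper, which simply states that all three parts ``follow from the definition of the insertion of a letter in a column''; your case analysis on the two branches of the insertion rule is exactly the verification the authors leave implicit. The point you flag as subtle --- that the bumped element $y$ satisfies $y \geq x$ and hence never lies in $\gamma_x$ --- is indeed the only detail worth making explicit, and you handle it correctly.
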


\begin{proof} All these statements follows from the definition of the insertion of a letter in a column. 
\end{proof}

\begin{corollary}\label{recursive-action} Let $\gamma$ be a column and $w$ be a word.

(i) If $\Supp(w)\subseteq \gamma$, then $w\cdot\gamma =\gamma$.

(ii) Let $\ell $ be a letter and $B=\{x\in A\mid x\leq \ell\}$. If $B\subseteq \gamma$, then $B\subseteq w\cdot\gamma$.
\end{corollary}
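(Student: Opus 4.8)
The plan is to prove both parts by induction on the length of $w$, in each case reducing to a single-letter statement by means of the left-action identity $(a w')\cdot\gamma = a\cdot(w'\cdot\gamma)$, which is exactly how the action of a word on a column was defined in Section~\ref{insert} (the letters of $w=a_1\cdots a_n$ are inserted one at a time, from $a_n$ down to $a_1$, cf. Proposition~\ref{first}). The empty word acts as the identity in both parts, supplying the base case.

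For part (i), write $w = a w'$ in the inductive step. Since $\Supp(w')\subset\Supp(w)\subset\gamma$, the induction hypothesis gives $w'\cdot\gamma=\gamma$, so that $w\cdot\gamma = a\cdot(w'\cdot\gamma)=a\cdot\gamma$. As $a\in\Supp(w)\subset\gamma$, the column $\gamma$ contains $a$, and Lemma~\ref{idemp}(i) yields $a\cdot\gamma=\gamma$, closing the induction.

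For part (ii), the same peeling $w=aw'$ reduces the statement to the single-letter claim: for any column $\gamma'$ and any letter $a$, if $B\subset\gamma'$ then $B\subset a\cdot\gamma'$ (here $\gamma'$ stands for $w'\cdot\gamma$, which contains $B$ by the induction hypothesis applied to $w'$). I would prove this claim by comparing $a$ with $\ell$. If $a\leq\ell$, then $a\in B\subset\gamma'$, so $\gamma'$ contains $a$ and Lemma~\ref{idemp}(i) gives $a\cdot\gamma'=\gamma'\supset B$. If instead $a>\ell$, then every element of $B$ is strictly below $a$, so $B\subset\gamma'_a$; by Lemma~\ref{idemp}(ii) we have $(a\cdot\gamma')_a=\gamma'_a$, whence $B\subset(a\cdot\gamma')_a\subset a\cdot\gamma'$. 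In either case $B\subset a\cdot\gamma'$, as required.

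The computation itself is short, so there is no serious technical obstacle; the one conceptual point worth isolating is that the statement crucially uses the \emph{full} initial segment $B=\{x\in A\mid x\leq\ell\}$, and not an arbitrary set of small letters. It is precisely because $a\leq\ell$ forces $a\in B\subset\gamma'$ that the insertion of $a$ cannot bump any element of $B$, while for $a>\ell$ the bumped element (if any) lies above $\ell$ and so lies outside $B$. For a non-initial-segment $B$ the conclusion can genuinely fail, since inserting a small letter absent from $B$ may bump an element of $B$. Thus the case split on whether $a\leq\ell$ is the heart of the argument rather than a routine formality.
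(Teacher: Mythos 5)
Your proof is correct and follows essentially the same route as the paper's: induction on the length of $w$, peeling off the leftmost letter via $(aw')\cdot\gamma = a\cdot(w'\cdot\gamma)$, with part (i) reduced to Lemma~\ref{idemp}(i) and part (ii) handled by the same case split $a\leq\ell$ (Lemma~\ref{idemp}(i)) versus $a>\ell$ (Lemma~\ref{idemp}(ii)) that the paper uses. Your closing observation that the initial-segment hypothesis on $B$ is what prevents bumping out of $B$ is a correct and worthwhile aside, but it is commentary rather than a divergence from the paper's argument.
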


\begin{proof} (i) follows from Lemma \ref{idemp} (i) by induction on the length of $w$. For (ii), we argue also by induction. The case when $w$ is empty is clear. Suppose that $w=xu$, $x\in A$, $u\in A^*$. Then $B\subseteq u\cdot \gamma=\gamma'$ by induction. We have $w\cdot\gamma=x\cdot\gamma'$. If $x\leq \ell$, then $x\in B\subseteq \gamma'$, hence $x\cdot\gamma'=\gamma'$ by Lemma \ref{idemp} (i) and consequently $B\subseteq x\cdot\gamma'$. If $x>\ell$, then $B\subseteq \gamma'_x$; since $(x\cdot\gamma')_x=\gamma'_x$ by Lemma \ref{idemp} (ii), we have 
$B\subseteq x\cdot\gamma'$.
\end{proof}

 \section{The stylic monoid}
 
We denote by $\Styl(A)$ the monoid of endofunctions of the set $\mathcal C(A)$ of columns obtained by the action defined in the previous 
section. Since $\mathcal C(A)$ is finite, $\Styl(A)$ is finite. Let $\mu:A^*\to \Styl(A)$ be the canonical monoid homomorphism. We denote by $\equiv_{styl}$ the 
corresponding monoid congruence of $A^*$: $u\equiv_{styl} v$, if and only if $\mu(u)=\mu(v)$, if and only if for each 
column $\gamma$, $u\cdot \gamma=v\cdot \gamma$. The monoid $\Styl(A)$ acts naturally on the set of columns, and we 
take the same notation: $m\cdot \gamma=w\cdot \gamma$ if $m=\mu(w)$.

\begin{proposition}\label{plax-styl} If $P(u)=P(v)$, then for any column $\gamma$, $u\cdot \gamma=v\cdot \gamma$, and in particular, 
$u\equiv_{styl} v$. Thus $\Styl(A)$ is naturally a quotient of $\Plax(A)$: $u\equiv_{plax}v\Rightarrow u\equiv_{styl}v$.
\end{proposition}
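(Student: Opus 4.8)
The plan is to reduce the statement to Proposition \ref{first} together with the fact, recalled in Section \ref{plactic}, that the plactic congruence is a monoid congruence. First I would observe that the hypothesis $P(u)=P(v)$ says precisely that $u\equiv_{plax}v$. Since $\equiv_{plax}$ is a monoid congruence, it is in particular compatible with right multiplication, so appending the row-word $r(\gamma)$ on the right yields $u\,r(\gamma)\equiv_{plax}v\,r(\gamma)$, that is $P(u\,r(\gamma))=P(v\,r(\gamma))$.

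Next I would invoke Proposition \ref{first}: for any word $w$, the column $w\cdot\gamma$ is the first column of the tableau $P(w\,r(\gamma))$. Applying this to $w=u$ and to $w=v$, I get that $u\cdot\gamma$ and $v\cdot\gamma$ are the first columns of $P(u\,r(\gamma))$ and $P(v\,r(\gamma))$ respectively. These two tableaux coincide by the previous step, hence so do their first columns, giving $u\cdot\gamma=v\cdot\gamma$. As $\gamma$ was arbitrary, this establishes $u\cdot\gamma=v\cdot\gamma$ for every column, which by the definition of $\equiv_{styl}$ is exactly $u\equiv_{styl}v$.

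The final assertion then follows formally: we have shown that the plactic congruence is contained in the stylic congruence, so the identity on $A^*$ descends to a surjective monoid homomorphism $\Plax(A)\to\Styl(A)$. I expect no genuine obstacle here, since the substantive content has already been packaged into Proposition \ref{first}. The only point requiring care is to use the right-compatibility of $\equiv_{plax}$ (rather than merely the fact that $P$ is well defined on plactic classes), and this is immediate from its being a monoid congruence.
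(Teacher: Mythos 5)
Your proof is correct and follows essentially the same route as the paper: the paper's argument writes $P(ur(\gamma))=P(u)P(r(\gamma))=P(v)P(r(\gamma))=P(vr(\gamma))$ and then applies Proposition \ref{first} to both sides, which is exactly your use of right-compatibility of $\equiv_{plax}$ (multiplicativity of $P$) combined with the same proposition. No gaps; the final quotient statement follows formally in both versions.
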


\begin{proof} By Proposition \ref{first}, $u\cdot \gamma$ is the first column of $P(uRW(\gamma))$; the latter element of $\Plax(A)$ is equal to $P(u)P(RW(\gamma))=P(v)P(RW(\gamma))=P(vRW(\gamma))$, whose first column is by the same result equal to $v\cdot \gamma$. 
\end{proof}

\begin{lemma}\label{idemp2}
For $x\in A$, $x^2\equiv_{styl}x$.
\end{lemma} 

\begin{proof} 
This follows from Lemma \ref{idemp} (o) and (i).
\end{proof}

Note that one has for any $u\in A^*$:
$$
u\equiv_{styl} RW(P(u)),
$$
since $u\equiv_{plax} RW(P(u))$ (Section \ref{plactic}).

It follows that for each element $m=\mu(u)$ of $\Styl(A)$, one has $m=\mu(RW(P(u)))$. Take $u$ of smallest length. Then no row of $P(u)$ contains repeated elements, otherwise $RW(P(u))$ contains a factor $aa$, and by Lemma \ref{idemp2}, $RW(P(u))\equiv_{styl} v$ for some word of shorter length. 

Hence each element of $\Styl(A)$ is represented by a tableau which has strictly increasing rows (and columns are evidently strictly increasing, too).

We note that this set of tableaux is not bijectively mapped onto $\Styl(A)$ (only surjectively). Indeed, an example of two such distinct tableaux which are mapped onto the same element of $\Styl(A)$ are shown in Figure \ref{cabd} and \ref{cdab}.
\begin{figure}\centering
\begin{minipage}[b]{0.45\linewidth}\centering
\begin{ytableau}
c\\
a&b&  d
\end{ytableau}
\caption{}\label{cabd}
\end{minipage}
\hfill
\begin{minipage}[b]{0.45\linewidth}\centering
\begin{ytableau}
c&d\\
a&b
\end{ytableau}
\caption{}\label{cdab}
\end{minipage}
\end{figure}
Their row words are equal modulo $\equiv_{styl}$, since we have the sequence of equivalences, using only the plactic congruence and the relation $cc\equiv_{styl}c$ (Lemma \ref{idemp2}):
$\underline{c}abd\equiv_{styl} c\underline{cab}bd \equiv_{styl} ca\underline{cbd} \equiv_{styl} \underline{cac}db  \equiv_{styl} cc\underline{adb} \equiv_{styl}  \underline{ cc}dab \equiv_{styl}  
cdab$, where underlines indicate the left-hand side of the relation which is used.

For further use, we state the following lemma.

\begin{lemma}\label{support}
If two words $u$ and $v$ have the same action on the set of columns over $A=\Supp(u)\cup \Supp(v)$, then $\Supp(u)=\Supp(v)$.
\end{lemma}

It follows that the function $\Supp$ is well-defined on $\Styl(A)$ (this will be also a consequence of Theorem \ref{fixed}).

\begin{proof} Suppose that $\Supp(u)\neq \Supp(v)$. By symmetry, we may assume that there exists a letter $\ell$ such that $\ell\in\Supp(u),\ell\notin \Supp(v)$. Define the 
column $\gamma=A\setminus \ell$. Then, $\Supp(v)\subseteq \gamma$, hence  $v\cdot \gamma=\gamma$ by Corollary \ref{recursive-action} (i), and in particular $\ell\notin v\cdot \gamma$. We may write $u=u_1\ell u_2$, where $\ell\notin\Supp(u_2)$; then $u_2\cdot 
\gamma=\gamma$ by Corollary \ref{recursive-action}  (i); next,  $\ell\cdot \gamma=\gamma'$, where $\gamma'$ has the property that it contains all the letters less than or equal to $\ell$; hence, $u_1\cdot 
\gamma'$ has also this property, by Corollary \ref{recursive-action} (ii). Since $u\cdot\gamma=u_1\cdot 
\gamma'$, we have $u\cdot\gamma\neq v\cdot\gamma$, and $u,v$ are not equivalent modulo $\equiv_{styl}$.
\end{proof}

\begin{proposition}\label{zero} The monoid $\Styl(A)$ has a zero, which is the image under $\mu$ of the decreasing product of all letters in $A$.
\end{proposition}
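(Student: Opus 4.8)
The plan is to show that $z := \mu(\delta)$, where $\delta$ is the decreasing product of all letters of $A = \{a_1 < \cdots < a_n\}$, acts on $\mathcal{C}(A)$ as the constant function sending every column to the full column $A$. Once this is established, verifying that $z$ is a two-sided zero, i.e.\ that $zm = mz = z$ for every $m \in \Styl(A)$, becomes immediate from the two parts of Corollary \ref{recursive-action}, since two elements of $\Styl(A)$ coincide exactly when they induce the same endofunction of $\mathcal{C}(A)$.

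First I would prove that $\delta\cdot\gamma = A$ for every column $\gamma$. Recall that the action of $\delta = a_n a_{n-1}\cdots a_1$ inserts its letters from right to left, i.e.\ in the order $a_1, a_2, \ldots, a_n$. I would argue by induction that, after inserting $a_1, \ldots, a_k$, the resulting column $\gamma_k$ contains $\{a_1, \ldots, a_k\} = \{x \in A : x \le a_k\}$. The base case is Lemma \ref{idemp}(o), which gives $a_1 \in a_1\cdot\gamma$. For the inductive step, Lemma \ref{idemp}(o) gives $a_{k+1} \in \gamma_{k+1} := a_{k+1}\cdot\gamma_k$, while Corollary \ref{recursive-action}(ii), applied with $\ell = a_k$, shows that the property $\{x \le a_k\} \subset \gamma_k$ is preserved under the insertion of $a_{k+1}$; hence $\{x \le a_{k+1}\} \subset \gamma_{k+1}$, completing the induction. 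Taking $k = n$ yields $A \subset \delta\cdot\gamma$, and since a column is a subset of $A$ we conclude $\delta\cdot\gamma = A$.

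With this in hand, I would verify the zero condition through the action, using that $\Styl(A)$ acts by composition, so that $(mm')\cdot\gamma = m\cdot(m'\cdot\gamma)$. For the left equality $zm = z$, note that $(zm)\cdot\gamma = z\cdot(m\cdot\gamma) = A = z\cdot\gamma$ for every $\gamma$, because $z$ sends every column to $A$. For the right equality $mz = z$, I would first observe that the full column $A$ is fixed by the whole monoid: since $\Supp(u) \subset A$ trivially for $m = \mu(u)$, Corollary \ref{recursive-action}(i) gives $u\cdot A = A$, hence $m\cdot A = A$. Therefore $(mz)\cdot\gamma = m\cdot(z\cdot\gamma) = m\cdot A = A = z\cdot\gamma$, so $mz = z$ as well, and $z$ is the zero of $\Styl(A)$.

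I do not anticipate a genuine obstacle; the only point requiring care is the bookkeeping in the induction, namely checking that at each stage the accumulated set of letters is exactly an initial segment $\{x \le a_k\}$, so that Corollary \ref{recursive-action}(ii) applies verbatim. The right-to-left insertion order built into the action is precisely what makes the smallest letters enter first, so that these initial segments grow by one letter at each step.
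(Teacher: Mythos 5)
Your proof is correct and follows essentially the same route as the paper: show that the decreasing word acts as the constant map sending every column to the full column $\gamma_0=A$, then verify absorption on both sides through the action, using that $\gamma_0$ is fixed by every element. The only difference is that you spell out the letter-by-letter induction (via Lemma \ref{idemp} (o) and Corollary \ref{recursive-action} (ii) with $\ell=a_k$) that the paper compresses into a single citation of Corollary \ref{recursive-action} (ii).
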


\begin{proof}\footnote{We are indebted to the anonymous referee for pointing out an error in an earlier version of this proof.}  Let $w$ be this product, which we view also as column, denoted $\gamma_0$: it is the maximal column on $A$ for the inclusion order. We claim that for any column $\gamma$ on $A$, $w\cdot\gamma=\gamma_0$. Hence, for any letter $x$, $wx\cdot\gamma=w\cdot(x\cdot\gamma)=\gamma_0=w\cdot\gamma$; thus $wx\equiv_{styl} w$. Moreover, $xw\cdot\gamma=x\cdot(w\cdot\gamma)=x\cdot\gamma_0=\gamma_0=w\cdot \gamma$; thus $xw\equiv_{styl}w$. Therefore $w$ is the zero of the stylic monoid.

We prove now the claim. Let $x$ any letter; then $w=uxv$ and: ($*$) each letter in $u$ is greater than $x$. By Lemma \ref{idemp} (o), $(xv)\cdot \gamma=x\cdot(v\cdot\gamma)$ contains $x$. Then, an easy induction on the length of $u$, using ($*$) and Lemma \ref{idemp} (ii), implies that $u\cdot ((xv)\cdot \gamma)$ also contains $x$. Hence $w\cdot \gamma$ contains $x$. Thus $w\cdot\gamma$ contains $A$, and finally $w\cdot\gamma=\gamma_0$.
\end{proof}

\section{A variant of Schensted row insertion}\label{N-tab}

\subsection{$N$-tableaux and right $N$-insertion}

Define an \textit{$N$-tableau} to be a tableau satisfying the following two conditions:

(i) the rows are strictly increasing;

(ii) each row is contained in the row below.

Note that the support of an $N$-tableau coincides with its first row.
As an example, see Figure $\ref{Tableau}$.  

\begin{figure}
\begin{ytableau}
d&e\\
b&d &e\\
a & b& c&d&e
\end{ytableau}
\caption{An $N$-tableau}\label{Tableau}
\end{figure}

To each $N$-tableau whose support is $A_1\subseteq A$, associate  the decreasing sequence of subsets of $A_1$ 
\begin{equation}\label{decreasing}
A_1 \supseteq A_2 \supseteq   A_3 \ldots
\end{equation}
where $A_i$ is the $i$-th row, viewed as a set. One has 
\begin{equation}\label{minima}
\min(A_1)<\min(A_2)<\min(A_3)\ldots,
\end{equation} 
since these elements constitute the first column of the $N$-tableau. We call {\it $N$-filtration on $A_1$} a sequence of subsets of $A_1$ satisfying (\ref{decreasing}) and (\ref{minima}); when $A_1$ is understood, we also say simply {\it $N$-filtration}. Note that the condition on the minima implies that the sequence is {\it strictly decreasing}.

Conversely, given an $N$-filtration, one associates with it an $N$-tableau, as is easily verified. Therefore, $N$-tableaux and $N$-filtrations are in bijection.

We describe now an algorithm, called the {\it right} $N$-{\it algorithm}, which associates with each word $w \in A^*$ an $N$-tableau $N(w)$. Viewing strictly increasing rows as subsets of 
$A$, let $B\subseteq A$ be such a row. The {\it right} $N$-{\it insertion of a letter $x$ in} $B$ is equal to $B\cup x$, and 
 if $y$ is the smallest element of $B$ which is strictly greater than $x$, then a copy of $y$ is bumped (and $y$ does not disappear from $B$). 
Note  that no element is bumped if and only if $x$ is greater than or equal to the elements of $B$.

Now {\it right} $N$-{\it insertion of $x$ in an} $N$-tableau is recursively defined as for the Schensted row insertion: insert $x$ in the 
first row, then the bumped element, if any, in the second one, and so on. 
For an example of this, see Figure \ref{insertion}.

\begin{figure}
\begin{ytableau}
c\\
b&c&e\\
a&b&c&d&e&\none &\none[\leftarrow c]\\
\end{ytableau}
\vspace{1mm}

\begin{ytableau}
c\\
b&c&e&\none &\none[\leftarrow d]\\
a&b&c&d&e&\none &\none\\
\end{ytableau}
\vspace{1mm}

\begin{ytableau}
c&\none &\none[\leftarrow e]\\
b&c&d&e\\
a&b&c&d&e&\none &\none\\
\end{ytableau}
\vspace{1mm}

\begin{ytableau}
c&e\\
b&c&d&e\\
a&b&c&d&e&\none &\none\\
\end{ytableau}
\vspace{1mm}
\caption{Right $N$-insertion of $c$ into an $N$-tableau}\label{insertion}
\end{figure}

\begin{proposition} The right $N$-insertion of $x$ in an $N$-tableau produces an $N$-tableau.
\end{proposition}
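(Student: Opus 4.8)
The plan is to argue entirely at the level of rows viewed as subsets. Write the input as its $N$-filtration $A_1\supseteq A_2\supseteq\cdots\supseteq A_k$, where $A_i$ is the $i$-th row read as a subset of $A$ (so $A_1$ is the bottom row and condition (ii) reads $A_{i+1}\subseteq A_i$). First I record the precise effect of inserting $x$ as a chain of bumps: set $z_1=x$ and, as long as $A_i$ contains an element exceeding $z_i$, let $z_{i+1}=\min\{z\in A_i\mid z>z_i\}$ be the element bumped out of row $i$; the route halts at the first row $m$ with $z_m\geq\max A_m$ (no bump), or runs past the top row $k$ and creates a new one-box row. By definition $z_{i+1}\in A_i$ and $z_i<z_{i+1}$, so the $z_i$ strictly increase in the finite set $A$ and the route terminates. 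The modified rows are $A_i'=A_i\cup\{z_i\}$, with a possible extra top row $\{z_{k+1}\}$, and the rows beyond the route are unchanged.

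Then I check that the output is an $N$-tableau, that is, a (semistandard) tableau satisfying (i) and (ii). Property (i), strictly increasing rows, is automatic because every $A_i'$ is a set. For the nesting (ii), $A_{i+1}'\subseteq A_i'$, the two facts needed are $A_{i+1}\subseteq A_i$ (hypothesis) and $z_{i+1}\in A_i$ (definition of the bump): then $A_{i+1}'=A_{i+1}\cup\{z_{i+1}\}\subseteq A_i\subseteq A_i'$, and the same computation covers the stopping row and the new singleton row (which only needs $z_{k+1}\in A_k$).

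For semistandardness it remains to see that columns strictly increase, equivalently that the first column does, i.e. $\min A_{i+1}'>\min A_i'$ for every consecutive pair. Using $\min A_i'=\min(\min A_i,\,z_i)$, both quantities defining $\min A_{i+1}'=\min(\min A_{i+1},\,z_{i+1})$ exceed $\min A_i'$: indeed $\min A_{i+1}>\min A_i\geq\min A_i'$ (the original first column is strict) and $z_{i+1}>z_i\geq\min A_i'$. At a stopping row $m$ one has $z_m\geq\max A_m\geq\min A_m$, so $\min A_m'=\min A_m$ and the original inequality is inherited, while the new singleton row is topmost and only needs the single comparison $z_{k+1}>z_k\geq\min A_k'$. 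Together with the nesting this yields strict inclusions $A_{i+1}'\subsetneq A_i'$, hence a partition shape (as $|A_{i+1}'|\leq|A_i'|$), and, by the elementary fact that a left-justified nested pair of sets whose smaller member has the larger minimum is column-strict, full column-strictness between consecutive rows, hence throughout.

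The genuine point to be careful about is not the nesting, which is immediate, but that column-strictness survives the boundary of the bumping route: inserting a letter already present leaves a row unchanged as a set yet still triggers a bump, and a new top row may be created, so one must ensure that no two consecutive rows ever acquire a common minimum. This is exactly where the strict increase $z_i<z_{i+1}$ of the bumped values is used, and I would isolate this inequality together with the identity $\min A_i'=\min(\min A_i,z_i)$ as the two facts that settle the interior, stopping and new-row cases uniformly.
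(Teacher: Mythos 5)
Your proof is correct, and it takes a mildly different route from the paper's. The paper argues by induction on the number of rows: if $y$ is bumped from the first row, then deleting the first row of $T\leftarrow x$ yields $T'\leftarrow y$, where $T'$ is $T$ with its first row removed, so by induction only two interface conditions need checking, namely $\Supp(T'\leftarrow y)\subset\Supp(T\leftarrow x)$ and $\min(T\leftarrow x)<\min(T'\leftarrow y)$, the latter from $\min(T\leftarrow x)=\min(\min(T),x)$, $x<y$, and $\min(T)<\min(T')$. You instead unroll that recursion: you describe the entire bumping route $z_1=x<z_2<\cdots$ explicitly, record $A_i'=A_i\cup\{z_i\}$, and verify nesting and strict increase of minima for every consecutive pair in one pass. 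The two facts you isolate, $z_i<z_{i+1}$ and $\min A_i'=\min(\min A_i,z_i)$, are exactly the paper's $x<y$ and $\min(T\leftarrow x)=\min(\min(T),x)$, applied at each level of the route rather than once per induction step, so the proofs share the same engine but differ in organization. Where you genuinely go beyond the paper is semistandardness: the paper folds column-strictness into the correspondence between $N$-filtrations and $N$-tableaux asserted without proof in Section \ref{N-tab} (``as is easily verified''), whereas you prove the needed lemma --- if $B\subseteq A$ and $\min B>\min A$, then the $j$-th smallest element of $B$ strictly exceeds that of $A$ --- making your version self-contained where the paper delegates; your global bookkeeping also handles the stopping row and the new singleton row uniformly, which the paper's induction absorbs silently. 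One nit of wording only: ``the element bumped out of row $i$'' is inaccurate, since a \emph{copy} of the letter is bumped and the letter itself remains in the row, as your own formula $A_i'=A_i\cup\{z_i\}$ shows and as your closing remark about inserting an already-present letter correctly acknowledges.
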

 
If $T$ is an $N$-tableau, we denote by $T\leftarrow x$ the $N$-tableau obtained by right $N$-insertion of $x$ into $T$. 

We use in the proof below the fact that if $S$ is a tableau, with $S'$ the tableau obtained by removing the first row of $S$, assuming that $S'$ is nonempty, then 
$S$ is an $N$-tableau if and only if the three following conditions are satisfied: $S'$ is an $N$-tableau; $\min(S)<\min(S')$; $\Supp(S)\supseteq \Supp(S')$. 

\begin{proof} If in the  $N$-insertion $T\leftarrow x$, no letter is bumped, then $x$ is greater than or equal to any letter in $T$. Then $(T\leftarrow x)=T$ if $x\in T$, and otherwise $T\leftarrow x$ is obtained by adding $x$ at the end of the first row of  $T$. Thus $T\leftarrow x$ is clearly an $N$-tableau.

Otherwise, $y$ is bumped from the first row. Let $T'$ be the $N$-tableau obtained by removing the first row of $T$. Then the tableau obtained by removing the first row of $T\leftarrow x$ is the tableau $T'\leftarrow y$. This latter tableau is by induction an $N$-tableau.  By the criterion stated before the proof, it is therefore enough to show that $\min(T\leftarrow x)<\min(T'\leftarrow y)$ and that $\Supp(T\leftarrow x)\supseteq \Supp (T'\leftarrow y)$. 

We have $\Supp(T')\subseteq\Supp(T)$, $y\in \Supp(T)$, $\Supp(T\leftarrow x)=\Supp(T)\cup x$ and $\Supp(T'\leftarrow y)=\Supp(T')\cup y$; thus $\Supp(T'\leftarrow y)\subseteq \Supp (T\leftarrow x)$.

We have $\min(T\leftarrow x)=\min(\min(T),x)$ and similarly $\min(T'\leftarrow y)=\min(\min(T'),y)$. Moreover, $\min(T)<\min(T')$ and $x<y$. Thus $\min(T\leftarrow x)<\min(T'\leftarrow y)$ (since $a<a',b<b'$ implies $\min(a,b)<\min(a',b'))$.
\end{proof}

Similarly to Schensted row insertion, the {\it right $N$-insertion of a word $w$ into an $N$-tableau} $T$ is obtained by inserting the first letter of $w$ into $T$, then the second one,
and so on. We denote by $N(w)$ the $N$-tableau obtained by inserting the word $w$ into the empty $N$-tableau.

\subsection{Inflation and simulation by Schensted row insertion}

Define an {\it inflation} of a word $w=a_1\cdots a_n, a_i\in A$, to be any word 
of the form $a_1^{x_1}\cdots a_n^{x_n}$ for some positive exponents $x_i\in\N$.

We show that the right $N$-algorithm may be simulated by the Schensted row insertion algorithm, in the following sense.

\begin{lemma}\label{infl} Each word $w$ has an inflation $w'$ such that $N(w)$ and $P(w')$ have the same number of rows, and that corresponding rows in $N(w)$ and $P(w')$ have the same support.
\end{lemma}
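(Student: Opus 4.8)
The plan is to prove Lemma \ref{infl} by induction on the length of $w$, constructing the inflation word and tracking the relationship between the $N$-algorithm and the Schensted row insertion simultaneously. The key observation is that $N$-insertion and Schensted row insertion differ only in the bumping rule: $N$-insertion inserts $x$ into a strictly increasing row $B$ by forming $B\cup x$ and bumping the smallest element strictly larger than $x$ (with no bump if $x$ is maximal), whereas Schensted row insertion into a weakly increasing row bumps the leftmost element strictly larger than $x$. Since the rows of an $N$-tableau are strictly increasing (hence coincide with their supports), the two rules bump the \emph{same} element whenever the letter $x$ is not already present in the row. The only genuine difference arises when $x\in B$: then $N$-insertion simply re-forms $B$ (adds nothing new, bumps nothing, by Lemma \ref{idemp} applied to rows), while Schensted insertion would append a second copy of $x$ and bump the element immediately to its right.

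First I would make precise the inductive hypothesis. Suppose $w=a_1\cdots a_n$ and that by induction we have an inflation $w''$ of $a_2\cdots a_n$ such that $N(a_2\cdots a_n)$ and $P(w'')$ have the same number of rows with corresponding rows having the same support. I then want to choose an exponent $x_1$ for the leading letter $a_1$ so that forming $w'=a_1^{x_1}w''$ and performing Schensted row insertion of the block $a_1^{x_1}$ into $P(w'')$ mimics the single $N$-insertion of $a_1$ into $N(a_2\cdots a_n)$. The point of using a \emph{power} $a_1^{x_1}$ rather than a single letter is precisely to absorb the discrepancy from repeated letters: inserting several equal copies of $a_1$ by Schensted insertion will, row by row, reproduce on the level of supports the effect of inserting $a_1$ once by the $N$-rule. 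I would track the insertion path through the rows and argue that at each row the support of the resulting Schensted row equals the support of the corresponding $N$-row, with the bumped letter in the two processes being the same letter; the extra copies in $a_1^{x_1}$ are exactly what is needed to refill positions where Schensted insertion, unlike $N$-insertion, would otherwise have displaced a repeated entry.

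The cleanest way to organize the step, I expect, is to prove a sharper row-level statement and then iterate it: if $R$ is a weakly increasing row whose support is a strictly increasing row $B$, and one inserts a suitable power $x^k$ of a letter $x$ into $R$ by Schensted insertion, then the support of the resulting row equals $B\cup x$ (the $N$-insertion of $x$ into $B$), and the total content bumped down to the next row has support equal to $\{y\}$, where $y$ is the element $N$-bumped from $B$ (or is empty if nothing is $N$-bumped). Choosing $k$ large enough at the top and letting the appropriate sub-power cascade downward gives the matching of supports row by row, and the number of rows agrees because a nonempty bump in one process corresponds to a nonempty bump in the other.

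The hard part will be bookkeeping the exponents: I must verify that a single global choice of $x_1$ (equivalently, a coherent choice of how many copies survive to each successive row) simultaneously achieves the correct support in \emph{every} row, rather than requiring incompatible exponents at different rows. I expect this to work because once a copy of $x$ is bumped from a row it carries the bumped letter $y$ downward, and the surplus copies of $x$ remaining in the upper row only pad multiplicities without altering supports; but checking that the same $y$ is bumped in both algorithms at every level, and that no spurious new letter enters a lower row in the Schensted process, is where the argument needs care. Verifying this amounts to the row-level claim above together with the inductive hypothesis on $P(w'')$, so the obstacle is really confined to establishing that single-row statement cleanly.
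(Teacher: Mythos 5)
Your plan contains a structural error before the hard part even starts: the induction is oriented the wrong way. Schensted row insertion processes the letters of a word from left to right, so $P(a_1^{x_1}w'')$ is obtained by inserting $w''$ into the one-row tableau $a_1^{x_1}$ — it is \emph{not} obtained by "performing Schensted row insertion of the block $a_1^{x_1}$ into $P(w'')$"; that computation produces $P(w''a_1^{x_1})$ instead. Likewise $N(w)$ is not $N(a_2\cdots a_n)$ with $a_1$ then $N$-inserted: in both algorithms the outermost operation involves the \emph{last} letter of $w$. Already for $w=ba$ your scheme collapses: $N(ba)$ has two rows (supports $\{a,b\}$ and $\{b\}$), while inserting any block $b^{x_1}$ into $P(a^{j})$ yields the single row $a^{j}b^{x_1}$. (The correct inflation here is, e.g., $b^2a$, whose exponent sits on the \emph{first} letter precisely because it is chosen before the rest of the word is inserted.)

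There are two further genuine gaps. First, you misstate the $N$-insertion rule: when $x\in B$, a copy of $y=\min\{b\in B \mid b>x\}$ is \emph{still} bumped (bumping fails only when $x\geq\max(B)$), and in every case the new row is $B\cup x$, which \emph{retains} $y$; Lemma \ref{idemp} concerns the action on columns, not rows. So the true discrepancy with Schensted is not the case $x\in B$ — it is that Schensted replaces a copy of the bumped letter by $x$, possibly deleting that letter from the row's support, whereas $N$-insertion keeps it. Your row-level claim, built on "$N$-insertion bumps nothing when $x\in B$", is false as stated. Second, even after reorienting the induction to peel the last letter, the hypothesis "corresponding supports agree" is too weak to choose the new exponent: for the bumped word to be a pure power $b^m$ \emph{and} for $b$ to survive in the row, the multiplicity of $b$ must strictly exceed the inserted exponent at every row of the cascade, and support data gives no lower bound on multiplicities (with $u=b$, $u'=b$, no admissible exponent for a subsequent $a$ exists). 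Moreover "choosing $k$ large enough at the top" goes in the wrong direction — an oversized block exhausts $b$'s multiplicity and bumps spurious larger letters, breaking the block structure. The paper closes exactly this hole by fixing all exponents simultaneously via the triangular system $x_i-\sum_{j>i}x_j\geq 1$ and proving inductively that every multiplicity in the partial tableau $T_k$ has the form $x_i+\sum_{i<j\leq k}\epsilon_j x_j$, hence exceeds $x_{k+1}$; some quantitative invariant of this kind must be added to your induction for it to close.
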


An example will be useful to understand the lemma: the two row-words of the tableaux in Figures \ref{cabd} and \ref{cdab} are $cabd$ and $cdab$. They have the same $N$-tableau under the $N$-algorithm, namely the tableau shown in Figure \ref{cabcd}.

\begin{figure}\centering
\begin{minipage}[b]{0.45\linewidth}\centering
\begin{ytableau}
c\\
a&b&c&d
\end{ytableau}
\caption{}\label{cabcd}
\end{minipage}
\hfill
\begin{minipage}[b]{0.45\linewidth}\centering
\begin{ytableau}
c&c\\
a&b&c&d
\end{ytableau}
\caption{}\label{ccabcd}
\end{minipage}
\end{figure}


Consider $w'=c^3dab$, which is an inflation of  $w=cdab$. Then it is easily verified that $P(w')$ is equal to the tableau shown in Figure \ref{ccabcd}. The corresponding rows of $N(w)$ and $P(w')$ have the same support.


\begin{proof}[Proof of Lemma \ref{infl}] We consider the following equivalent version of Schensted row insertion of a word $w$ into a tableau $T$. For a word $w$, factorized as $w=u_1\cdots u_k$, one may insert first $u_1$ in the first row of $T$, constructing from left to right the word $v_1$ of bumped letters; then insert $v_1$ into the second row, and so on until the last row; then continue with the second factor $u_2$, and so on. We call this {\it row insertion by factors}.

It may be that each factor $u_i$ is a power of some letter, and also that each bumped word, $v_1$ and the others, are powers of some letter (not the same letter for all these words). In this case, we say that the insertion by factors {\it satisfies the block condition}. In order to be such, the necessary and sufficient condition is that each inserted factor is a power $a^i$ and that, when inserted in a row, and if letters are bumped, there must be in this row at least $i$ letters $b$, with $b$ being the minimum of the letters greater than $a$ in the row. Note that the bumped word is then $b^i$, with the same exponent.

Let $w=a_1\cdots a_n$. We show that for some choice of the exponents $x_i$, the row insertion by factors of $w'=a_1^{x_1}\cdots a_n^{x_n}$, with the factors $a_i^{x_i}$, satisfies the block condition.

Consider the linear forms $f_i(x)=x_i-\sum_{i<j}x_j$, in the variables $x_1,\ldots, x_n$. Due to their triangularity property, it is clear that the system of inequalities $f_i(x)\geq 1$ has at least one solution $x_1,\ldots,x_n$ in positive integers. We choose these exponents $x_i$ to inflate $w$. 

Denote by $T_k$ the tableau obtained after Schensted row insertion, into the empty tableau, of $a_1^{x_1}\cdots a_k^{x_k}$. We show by induction that the block condition is satisfied, and that 
each row of $T_k$, when viewed as a word, is an increasing product of letters with exponents equal to $x_i+\sum_{i<j\leq k} \epsilon_j x_j$, 
with $\epsilon_j\in\{-1,0,1\}$, for some $i\leq k$. This is clear for $T_1=a_1^{x_1}$, a tableau with one row. 

Now, insert $a_{k+1}^{x_{k+1}}$ into $T_k$, obtaining $T_{k+1}$. If nothing is bumped, the block condition 
is clearly satisfied, as are the exponent conditions for $T_{k+1}$. Otherwise, some $b^{x_{k+1}}$ is bumped. Moreover, the exponents in the first row are not changed, with the two following exceptions: 1) The exponent of $a_{k+1}$ increases by
$x_{k+1}$. 2) the exponent of $b$ decreases by $x_{k+1}$; note that this is possible (that is, the block condition is satisfied at this row insertion), since its 
exponent before bumping is of the form $x_i+\sum_{i<j\leq k} \epsilon_j x_j$, which is greater than $x_{k+1}$; indeed, this follows from $x_i+\sum_{i<j\leq k}\epsilon_jx_j-x_{k+1}\geq f_i(x)\geq 1$. Now one 
inserts $b^{x_{k+1}}$ in the second row, and so on, and the argument is similar.

Finally, the tableau $T_{n}$, which is $P(w')$, satisfies the required conditions, since one verifies recursively that each step of the previous insertion by factors corresponds to a step of the $N$-insertion of $w$, and that the corresponding rows have the same support.
\end{proof}

\subsection{The mapping $\delta$}\label{delta}

We define a mapping $\delta: A^*\to A^*$ as follows. Define for each subset $B$ of 
$A$, and each letter $x$ in $A$, the element $x^\uparrow_B\in B\cup 1$ to be the smallest letter in $B$ which is greater than $x$, and 
the empty word $1$ if such a letter 
does not exist (that is, if $x\geq \max(B)$). Then we define $\delta(1)=1$, and $\delta(wx)=\delta(w)x^\uparrow_{\Supp(w)}$, for any word $w$ and any letter $x$. 

Concretely, one scans the letters of $w$ from left to right, at each position one searches at the left the smallest letter which is greater than the letter in the current position (it may not exist), and write these letters form left to right. 

Example: let the alphabet be $\{a<b<c<d\}$; then $\delta(acccadbcbac)=ccdcbd$, and the algorithm just described is best seen on a two rows array:
$$
\begin{array}{cccccccccccccccc}
a&c&c&c&a&d&b&c&b&a&c&=&w\\
  &  &  & &c&  &c&d&c&b&d &=&\delta(w)
\end{array}
$$ 

The following lemma is a direct consequence of the definition of the right $N$-algorithm; indeed, the sequence of bumped letters from the first row during the right $N$-algorithm applied to $w$ is precisely the word $\delta(w)$.

\begin{lemma}\label{N-variant} The first row of $N(w)$ is $\Supp(w)$ (viewed as a strictly increasing word) and the remaining $N$-tableau is $N(\delta(w))$.
\end{lemma}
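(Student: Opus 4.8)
The plan is to prove Lemma \ref{N-variant} by unwinding the definition of the $N$-algorithm and identifying its two structural components: the first row produced, and the recursively-inserted remainder. Recall that $N(w)$ is built by $N$-inserting the letters of $w=a_1\cdots a_n$ one at a time, and each letter is first inserted into the first row (a strictly increasing word, viewed as a subset of $A$). I would track two things in parallel throughout the insertion: (1) the evolving first row, and (2) the sequence of letters bumped out of the first row into the second, which then seed the recursive $N$-insertion producing the rest of the tableau.

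First I would verify the claim about the first row. By the description of $N$-insertion of a letter $x$ into a strictly increasing row $B$, the result is exactly $B\cup x$ (the bumping removes a copy of $y=x^\uparrow_B$ but that copy is passed downward, while $x$ is added). Hence after inserting $a_1,\ldots,a_n$ the first row, read as a subset, is $\{a_1\}\cup\cdots\cup\{a_n\}=\Supp(w)$, which as a strictly increasing word is precisely $\Supp(w)$. This is a short induction on the length of $w$, using that the first-row operation is literally set-union with the inserted letter.

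Next I would identify the bumped sequence with $\delta(w)$. When $a_k$ is inserted into the first row, whose support at that moment is $\Supp(a_1\cdots a_{k-1})$, the bumped letter is the smallest element of that row strictly larger than $a_k$ — which is exactly $(a_k)^{\uparrow}_{\Supp(a_1\cdots a_{k-1})}$ — or nothing if $a_k$ exceeds all elements present, matching the convention that $x^\uparrow_B=1$ (the empty word) in that case. Comparing this with the recursive definition $\delta(wx)=\delta(w)\,x^\uparrow_{\Supp(w)}$, a straightforward induction on the length of $w$ shows that the word of letters bumped out of the first row, read in the order of insertion (left to right), is exactly $\delta(w)$. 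Since the remainder of the tableau below the first row is, by the recursive structure of $N$-insertion, precisely the $N$-tableau obtained by $N$-inserting this bumped word into the empty tableau, that remainder is $N(\delta(w))$.

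The main subtlety, rather than a genuine obstacle, is bookkeeping the interaction between the first-row support and the empty-word convention: one must confirm that ``no bump'' in the $N$-algorithm corresponds exactly to appending the empty word $1$ in $\delta$, so that the two inductions stay synchronized letter by letter. I would handle this by making the inductive statement assert both halves simultaneously — that after processing any prefix of $w$, the first row equals the support of that prefix and the accumulated bumped word equals $\delta$ of that prefix — and then checking the single inductive step for an arbitrary inserted letter $x$, splitting on whether $x\geq\max(\Supp(w))$ or not. Given Lemma \ref{idemp} and the explicit row-insertion description already recorded, this step is routine, and the lemma follows.
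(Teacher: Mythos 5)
Your proposal is correct and follows exactly the argument the paper uses: the paper dispenses with a formal proof, remarking only that the lemma is a direct consequence of the definition of the $N$-algorithm because the sequence of bumped letters from the first row is precisely $\delta(w)$, which is the identification at the heart of your two-part induction. Your write-up simply makes explicit the bookkeeping (first row equals the support of the processed prefix; accumulated bumped word equals $\delta$ of that prefix) that the paper treats as immediate.
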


Define, for two subsets $B,C$ of the alphabet, the set $$\D_B(C)=\{c^\uparrow_B \mid c\in C, c^\uparrow_B\neq 1\},$$
which is a subset of $B$. Note that if $B\subseteq C$ and $\min(B)>\min(C)$, then 
\begin{equation}\label{DBCB}
\D_B(C)=B.
\end{equation}

We denote by $\sigma$ the natural bijection associating to each subset of $A$ the increasing product of its elements. Note that if a word $u$ is increasing, then 
\begin{equation}\label{s}
u\equiv_{styl}\sigma(\Supp(u)),
\end{equation}
by Lemma \ref{idemp2}. For later use, we prove the following lemma.

\begin{lemma}\label{deltaN} Let $u_1,\ldots,u_k$ be strictly increasing words such that 
their supports $U_1, ..., U_k$ satisfy $U_1 \supseteq \cdots \supseteq U_k$. Let $x\in A^*$ and $X=\Supp(x)$. Then 
$$
\delta(xu_k\cdots u_1)\equiv_{styl}    \delta(x)\prod_{i=k}^{i=1}\sigma(\D_{U_{i+1}\cup X}(U_{i})),
$$
with the convention that $U_{k+1}=\emptyset$.
\end{lemma}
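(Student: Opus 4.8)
The plan is to reduce the statement to a single \textbf{one-step formula} describing how $\delta$ changes when one strictly increasing factor is appended on the right, and then to iterate it. Throughout I write $W=\Supp(w)$ for the current prefix $w$, and I recall the observation stated just before the lemma that any (weakly) increasing word $v$ satisfies $v\equiv_{styl}s(\Supp(v))$, which follows from Lemma~\ref{idemp2}.

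First I would prove the one-step formula: if $u=c_1c_2\cdots c_m$ is strictly increasing, so $c_1<c_2<\cdots<c_m$, with support $U$, and $w$ is any word with support $W$, then
$$
\delta(wu)=\delta(w)\,(c_1)^\uparrow_{W}(c_2)^\uparrow_{W}\cdots(c_m)^\uparrow_{W}.
$$
The key point, and really the crux of the whole argument, is that a letter of $u$ already inserted never affects the bumping of a later letter of $u$: when computing $(c_j)^\uparrow$ the current support is $W\cup\{c_1,\dots,c_{j-1}\}$, but each $c_\ell$ with $\ell<j$ satisfies $c_\ell<c_j$, so the smallest element exceeding $c_j$ is the same whether computed in $W$ or in $W\cup\{c_1,\dots,c_{j-1}\}$; hence $(c_j)^\uparrow_{W\cup\{c_1,\dots,c_{j-1}\}}=(c_j)^\uparrow_{W}$. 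A straightforward induction on $m$ using the recursion $\delta(w'y)=\delta(w')y^\uparrow_{\Supp(w')}$ then yields the displayed identity.

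Next I would note that the appended word $(c_1)^\uparrow_{W}\cdots(c_m)^\uparrow_{W}$ is weakly increasing: the map $c\mapsto c^\uparrow_{W}$ is monotone where defined, and the empty values $c^\uparrow_W=1$ occur only once $c\ge\max(W)$, hence only at the tail, so the non-empty prefix is weakly increasing. Its support is exactly $\D_W(U)$ by definition of $\D$. Therefore this word is $\equiv_{styl}s(\D_W(U))$ by Lemma~\ref{idemp2}, and since $\equiv_{styl}$ is a congruence I may multiply on the left by $\delta(w)$ to obtain the clean one-step statement $\delta(wu)\equiv_{styl}\delta(w)\,s(\D_W(U))$. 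I would then assemble the general statement by induction on $k$, applying this to the successive factors $u_k,u_{k-1},\dots,u_1$ of $xu_k\cdots u_1$. The remaining ingredient is to track the support of the prefix: after $xu_k\cdots u_{i+1}$ has been processed, its support is $X\cup U_{i+1}\cup\cdots\cup U_k$, which by the nestedness hypothesis $U_1\supseteq U_2\supseteq\cdots\supseteq U_k$ collapses to $X\cup U_{i+1}$ (and to $X$ when $i=k$, matching the convention $U_{k+1}=\emptyset$). Applying the one-step formula with $w=xu_k\cdots u_{i+1}$, $W=X\cup U_{i+1}$, and $u=u_i$ thus appends exactly the factor $s(\D_{U_{i+1}\cup X}(U_i))$, and reading the contributions in processing order $i=k,k-1,\dots,1$ produces the product $\prod_{i=k}^{i=1}s(\D_{U_{i+1}\cup X}(U_i))$ in the stated order.

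I do not anticipate a genuine obstacle: the entire argument is driven by the single observation that within a strictly increasing factor an already-inserted letter is too small to be bumped by a later one, so all bumping targets are computed against the ambient support $W$ alone. The only points requiring care are bookkeeping: verifying that the bumped word is weakly increasing (so that Lemma~\ref{idemp2} collapses it to $s(\D_W(U))$), for which the remark that the empty values $c^\uparrow_W=1$ sit only at the tail is essential, and tracking the nested supports so that the index $i+1$ rather than the whole remaining tail appears in each $\D$.
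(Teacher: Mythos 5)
Your proposal is correct and takes essentially the same route as the paper: the paper also unfolds $\delta(xu_k\cdots u_1)$ via the recursion $\delta(wy)=\delta(w)y^\uparrow_{\Supp(w)}$, uses exactly your key observation that letters to the left of $y$ inside a strictly increasing factor are $<y$ and hence do not change $y^\uparrow$, collapses the prefix support to $U_{i+1}\cup X$ by nestedness, and invokes Lemma~\ref{idemp2} to replace each weakly increasing bumped block by $s(\D_{U_{i+1}\cup X}(U_i))$. The only difference is presentational: you package the computation as a one-step formula iterated by induction on $k$, whereas the paper writes it as a single product over all factorizations $u_i=uyv$.
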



\begin{proof} It follows directly from the definition of $\delta$ that for any word $w$, $\delta(xw)=\delta(x)\prod_{w=vyv'
}y^\uparrow_{\Supp(xv)}$, where the product is over all factorizations $w=vyv'$, $v,v'\in A^*,y\in A$, and from left to right. Let 
$w=u_k\cdots u_1$; then $\delta(xw)=\delta(x)\prod_{i=k}^{i=1}\prod_{u_i=vyv'}
y^\uparrow_{\Supp(xu_k\cdots 
u_{i+1}v)}$. Note that, in the latter product, the letters in $v$ are less than $y$; hence $y^\uparrow_{\Supp(xu_k\cdots u_{i+1}v)}
=y^\uparrow_{\Supp(xu_k\ldots u_{i+1})}$. Moreover, the supports of the $u_i$ being decreasing from $1$ to $k$ in the inclusion order, we have $
\Supp(xu_k\cdots 
u_{i+1})=\Supp(xu_{i+1})=U_{i+1}\cup X$. 
Thus
$\delta(w)=\delta(x)\prod_{i=k}^{i=1}\prod_{u_i=vyv'}y^\uparrow_{U_{i+1}\cup X}$. 
Finally, note that if a word $m$ is strictly increasing, and $U$ a subset of $A$, then the word $p=\prod_{m=vyv'}y^\uparrow_U$ is 
increasing, so that $p\equiv_{styl}\sigma(\Supp(p))$, by (\ref{s}); thus $p\equiv_{styl}\sigma(\D_U(\Supp(m)))$, since 
$\Supp(p)=\{y^\uparrow_U\mid y\in \Supp(m), y^\uparrow_U\neq 1\}=\D_U(\Supp(m))$. It follows from this that
$\delta(w)\equiv_{styl}\delta(x)\prod_{i=k}^{i=1}\sigma(\D_{U_{i+1}\cup X}(U_i))$.
\end{proof}

\section{A bijection}

\begin{theorem}\label{bijection} The mapping $w\mapsto N(w)$ induces a bijection from the monoid $\Styl(A)$ onto the set of $N$-tableaux on $A$.
\end{theorem}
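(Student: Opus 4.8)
The plan is to build the inverse map explicitly and check the two maps are mutually inverse. For an $N$-tableau $T$ with $N$-filtration $A_1\supset A_2\supset\cdots\supset A_k$, let $r(T)=s(A_k)s(A_{k-1})\cdots s(A_1)$ be its row word (the rows read top to bottom, each as the strictly increasing word on its support), and let the candidate inverse send $T$ to $\mu(r(T))\in\Styl(A)$. To make things fit I would isolate three statements: (A) every word satisfies $w\equiv_{styl} r(N(w))$; (B) every $N$-tableau satisfies $N(r(T))=T$; and (C) distinct $N$-tableaux $T\neq T'$ satisfy $r(T)\not\equiv_{styl} r(T')$. Granting these, injectivity of the induced map follows from (A) (if $N(u)=N(v)$ then $u\equiv_{styl} r(N(u))=r(N(v))\equiv_{styl} v$), surjectivity follows from (B), and well-definedness of $w\mapsto N(w)$ on $\Styl(A)$ is exactly (C) read through (A) (if $u\equiv_{styl} v$ then $r(N(u))\equiv_{styl} r(N(v))$, so $N(u)=N(v)$).

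Statement (A) I would prove using Lemma \ref{infl}: take the inflation $w'$ of $w$ provided there, so that $P(w')$ and $N(w)$ have the same number of rows with equal supports row by row. Since $w'$ repeats letters in adjacent blocks, Lemma \ref{idemp2} gives $w'\equiv_{styl} w$; since $w'\equiv_{plax} r(P(w'))$, Proposition \ref{plax-styl} gives $w'\equiv_{styl} r(P(w'))$; and reducing each weakly increasing row of $P(w')$ to the strictly increasing word on its support, again by Lemma \ref{idemp2}, turns $r(P(w'))$ into $r(N(w))$ modulo $\equiv_{styl}$. Statement (B) I would prove directly by induction on $k$ using Lemma \ref{N-variant}: the first row of $N(r(T))$ is $\Supp(r(T))=A_1$, and it remains to see that $N$-inserting the increasing word $s(A_1)$ into $N(s(A_k)\cdots s(A_2))$ — which equals the filtration $A_2\supset\cdots\supset A_k$ by induction — reproduces $A_1\supset A_2\supset\cdots\supset A_k$. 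This is a short self-contained computation: the letters of $s(A_1)$, inserted in increasing order, fill the bottom row up to $A_1$, and the cascade of bumped letters regenerates the old rows (the inequalities $\min A_1<\min A_2$ and $A_2\subset A_1$ are exactly what make the bumping rebuild $A_2$, and so on upward).

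Statement (C) is the heart of the matter, and I would prove it by induction on the number of rows. If $A_1\neq A'_1$, then Lemma \ref{support} already gives $r(T)\not\equiv_{styl} r(T')$; concretely $\gamma=A\setminus\ell$ for $\ell\in A_1\triangle A'_1$ separates them, as in the proof of that lemma. Otherwise $A_1=A'_1=:R$, the shifts $\sigma T,\sigma T'$ (remove the bottom row) are distinct, and by induction some column $\gamma_0$ satisfies $r(\sigma T)\cdot\gamma_0\neq r(\sigma T')\cdot\gamma_0$. Since $r(T)=r(\sigma T)\,s(R)$ and $r(T')=r(\sigma T')\,s(R)$, for every column $\gamma$
\[
 r(T)\cdot\gamma=r(\sigma T)\cdot\bigl(s(R)\cdot\gamma\bigr),\qquad r(T')\cdot\gamma=r(\sigma T')\cdot\bigl(s(R)\cdot\gamma\bigr),
\]
so it suffices to realise $\gamma_0$ in the image of $\gamma\mapsto s(R)\cdot\gamma$: any $\gamma$ with $s(R)\cdot\gamma=\gamma_0$ then separates $r(T)$ from $r(T')$.

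The main obstacle is this last step, namely understanding and inverting the column transformation $\gamma\mapsto s(R)\cdot\gamma$. I would prove the sub-lemma that its image is exactly the set of columns containing $\min R$. One inclusion is easy: $\min R$ is the last (leftmost) letter inserted, and Lemma \ref{idemp} shows that inserting it produces a column containing $\min R$, so every value lies in that set; the reverse inclusion amounts to constructing a preimage of a given $\eta\ni\min R$. Granting the sub-lemma, the induction closes provided the separators $\gamma_0$ can be kept inside the image, i.e. chosen to contain $\min R$. This holds in the base case (the separators $A\setminus\ell$ use $\ell\ge\min A_2>\min R$, hence retain $\min R$), and I would carry ``the separator contains $\min R$'' as an inductive invariant, exploiting that a letter strictly below every inserted letter survives insertion unchanged (so small letters pass through $s(R)\cdot(-)$), together with Corollary \ref{recursive-action}. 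Making the image description precise and propagating this invariant through the successive pullbacks is the delicate bookkeeping I expect to occupy the bulk of the proof.
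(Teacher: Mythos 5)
Your overall architecture (A)+(B)+(C) is logically sound, and (A) is exactly the paper's argument for injectivity (Eq.~(\ref{wequivrNw}), via Lemma \ref{infl}, Lemma \ref{idemp2} and the plactic identity $w'\equiv_{plax}r(P(w'))$), while your direct insertion proof of (B) is a plausible variant of Lemma \ref{NT} (the paper instead deduces it from Lemma \ref{deltaN}). But the heart of your proof, step (C), rests on a sub-lemma that is false: the image of $\gamma\mapsto s(R)\cdot\gamma$ is \emph{not} in general the set of columns containing $\min R$. Take $A=\{a<b<c\}$ and $R=\{a,c\}$, so $s(R)=ac$; since the action processes a word from right to left, $s(R)\cdot\gamma=a\cdot(c\cdot\gamma)$. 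Now $c\cdot\gamma$ always contains $c$, hence lies in $\{\{c\},\{a,c\},\{b,c\},\{a,b,c\}\}$, and applying $a\cdot(-)$ to these four columns gives $\{a\}$, $\{a,c\}$, $\{a,c\}$, $\{a,b,c\}$ respectively. So the image of $s(R)\cdot(-)$ is $\{\{a\},\{a,c\},\{a,b,c\}\}$, and the column $\{a,b\}$ contains $\min R=a$ yet has no preimage. Worse, this breaks your argument precisely where you invoke it: in the support-splitting case your separators are of the form $\gamma_0=A\setminus\ell$, and in this example $A\setminus c=\{a,b\}$ is exactly such a separator that cannot be pulled back through $s(R)\cdot(-)$. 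So the invariant ``the separator contains $\min R$'' does not propagate, and the induction does not close as written. (A separator inside the image does exist here, namely $\{a\}$, but nothing in your scheme produces one; a repair would require a correct description of the image of $s(R)\cdot(-)$ --- which depends delicately on the gaps of $R$ inside $A$, as the example shows --- together with a genuinely stronger inductive hypothesis.)

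It is worth noting that the paper's proof of well-definedness (Lemma \ref{class}) is organized specifically to avoid this inversion problem. Instead of pulling separating columns \emph{backward} through the bottom row, it argues \emph{forward}: by induction on $|A|$, the first row of $N(w)$ is $\Supp(w)$, a stylic invariant by Lemma \ref{support}; the rest of the tableau is $N(\delta(w))$ by Lemma \ref{N-variant}; and Lemma \ref{action-} shows that the action of $\delta(w)$ on $\C(A\setminus a)$ is completely determined by the action of $w$ on $\C(A)$ (via $w\cdot\gamma=a\cup\delta(w)\cdot\gamma^+$), so $w\equiv_{styl}w'$ forces $N(\delta(w))=N(\delta(w'))$ by induction on the alphabet. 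This determination goes in the direction in which the column maps behave well, and it sidesteps entirely the question of which columns are realizable as $s(R)\cdot\gamma$. If you want to keep your bottom-row-peeling strategy, you should replace your sub-lemma by an argument of this forward type, or else prove a correct (and necessarily more intricate) characterization of the image of $s(R)\cdot(-)$ and re-derive the existence of separators inside it.
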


The theorem is a consequence of several lemmas. 

\begin{lemma}\label{first-column} (i) The tableaux $P(w)$ and $N(w)$ have the same first column, which is $w\cdot \emptyset$.

(ii) Let $\gamma$ be any column. Then $w\cdot\gamma$ is equal to the first column of $N(wu)$, where $u$ is the strictly decreasing word associated to $\gamma$.
\end{lemma}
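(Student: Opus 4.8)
The plan is to deduce both parts from Proposition \ref{first} together with the simulation Lemma \ref{infl}. For part (i), first take $\gamma$ to be the empty column in Proposition \ref{first}: then $r(\gamma)$ is the empty word, $P(wr(\gamma))=P(w)$, and the proposition says exactly that $w\cdot 1$ is the first column of $P(w)$. So everything reduces to showing that $P(w)$ and $N(w)$ have the same first column.

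The key observation is that, in any tableau, the first column is the sequence of row-minima read from bottom to top: the shape is left-justified, so every nonempty row meets the leftmost column, where it contributes its smallest letter. Hence the first column of a tableau is determined by its number of rows and the supports of its rows. Now I would apply Lemma \ref{infl} to obtain an inflation $w'$ of $w$ for which $N(w)$ and $P(w')$ have the same number of rows and equal corresponding row-supports; by the observation they then have the same first column. On the other hand $w'\equiv_{styl}w$, since an inflation only repeats letters and $a^k\equiv_{styl}a$ follows from Lemma \ref{idemp2} ($\equiv_{styl}$ being a congruence). Applying the first reduction to both $w'$ and $w$, this gives that $P(w')$ and $P(w)$ have the same first column, namely $w'\cdot 1=w\cdot 1$. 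Chaining the two equalities, the first column of $N(w)$ equals that of $P(w')$, which equals that of $P(w)$, which is $w\cdot 1$; this is (i).

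Part (ii) is then immediate. By Proposition \ref{first}, $w\cdot\gamma$ is the first column of $P(wr(\gamma))=P(wu)$, and applying part (i) to the word $wu$ identifies this with the first column of $N(wu)$; hence $w\cdot\gamma$ is the first column of $N(wu)$, as claimed. I expect no genuine obstacle here: all the substance sits in Lemma \ref{infl} (the inflation that lets Schensted row insertion emulate the $N$-algorithm), which is assumed. The only points requiring care are the identification of a first column with the row-minima and the fact that inflation preserves the stylic class, both of which are routine bookkeeping.
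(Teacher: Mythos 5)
Your proof is correct and follows essentially the same route as the paper: both parts rest on Proposition \ref{first} combined with the inflation Lemma \ref{infl}, using $w'\equiv_{styl}w$ (via Lemma \ref{idemp2}) to identify the first columns of $P(w')$ and $P(w)$, and then deducing (ii) from (i) applied to $wu$. Your explicit justification that the first column consists of the row-minima, so that equal row-supports force equal first columns, merely spells out a step the paper leaves implicit.
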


\begin{proof} (i) We know by Lemma \ref{infl} that $w$ has some inflation $w'$ such that the corresponding rows in $N(w)$ and $P(w')$ have the same support. Hence these two tableaux have the same first column. Moreover $w'\equiv_{styl}w$ by Lemma \ref{idemp2}; thus $w'\cdot \emptyset=w\cdot \emptyset$. Hence $P(w')$ and $P(w)$ have the same first column, by Proposition \ref{first}.

(ii) We know by Proposition \ref{first} that $w\cdot\gamma$ is equal to the first column of $P(wu)$; hence also to the first column of $N(wu)$ by (i).
\end{proof}

Let $\gamma$ be a column on the alphabet $A$. We denote by $\gamma^-$ the column obtained by replacing each letter by the previous one in the alphabet $A$, removing if necessary the smallest letter. The column $\gamma^+$ is defined symmetrically.

\begin{lemma}\label{action-} Let $a=\min(A)$ and  $z=\max(A)$. Let $\gamma$ be a column on $A\setminus z$, and $w\in A^*$ with $A=\Supp(w)$. Then $w\cdot \gamma=a\cup \delta(w)\cdot \gamma^+$ and $(w\cdot \gamma)^-=\delta(w)^-\cdot \gamma$.
\end{lemma}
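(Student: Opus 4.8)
The plan is to prove the two identities in sequence: first the ``lifting'' identity $w\cdot\gamma = a\cup\delta(w)\cdot\gamma^+$, using the $N$-tableau description of the action together with the $\delta$-formula, and then to deduce the ``lowering'' identity $(w\cdot\gamma)^-=\delta(w)^-\cdot\gamma$ from it by showing that the decrement operation $\beta\mapsto\beta^-$ commutes with the action of any word that does not use the letter $a$.

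For the first identity, let $u=r(\gamma)$ be the decreasing word of $\gamma$. By Lemma \ref{first-column}(ii), $w\cdot\gamma$ is the first column of $N(wu)$. Since $\Supp(w)=A$ we have $\Supp(wu)=A$, so by Lemma \ref{N-variant} the first row of $N(wu)$ is all of $A$ and the tableau sitting above it is $N(\delta(wu))$. Reading off the first column, its bottom entry is $\min(A)=a$ and the entries above form the first column of $N(\delta(wu))$, which by Lemma \ref{first-column}(i) equals $\delta(wu)\cdot 1$; hence $w\cdot\gamma=a\cup(\delta(wu)\cdot 1)$. It then remains to identify $\delta(wu)\cdot 1$ with $\delta(w)\cdot\gamma^+$. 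For this I would invoke the identity $\delta(xw')=\delta(x)\prod_{w'=pyq}y^\uparrow_{\Supp(xp)}$ established at the start of the proof of Lemma \ref{deltaN}, with $x=w$ and $w'=u$. Because $\Supp(w)=A$, every prefix satisfies $\Supp(wp)=A$, so each factor $y^\uparrow_{\Supp(wp)}$ is simply the successor of $y$ in $A$; as $\gamma\subseteq A\setminus z$ these successors exist, and running over the letters of $u=r(\gamma)$ produces exactly $r(\gamma^+)$. Thus $\delta(wu)=\delta(w)\,r(\gamma^+)$ \emph{as words}, and since $\cdot$ is a left action, $\delta(wu)\cdot 1=\delta(w)\cdot(r(\gamma^+)\cdot 1)=\delta(w)\cdot\gamma^+$, the last equality because inserting a decreasing word into the empty column rebuilds the corresponding column. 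This gives the first identity.

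For the second identity the key auxiliary fact is a commutation lemma: if $a\notin\Supp(v)$, then $(v\cdot\beta)^-=v^-\cdot\beta^-$ for every column $\beta$, where $v^-$ decrements every letter of $v$ (no deletion occurs, since no letter equals $a$). I would prove this by induction on the length of $v$, reducing to the one-letter statement $(x\cdot\beta)^-=x^-\cdot\beta^-$ whenever $x>a$ (with $x^-$ the predecessor of $x$), which is checked directly by comparing the two cases (append versus bump) of column insertion and noting that the decrement map is order-preserving and respects the append-versus-bump decision. Granting this, I apply $-$ to the first identity: every letter of $\delta(w)$ is a bumped letter $y^\uparrow_B>y\ge a$, so $a\notin\Supp(\delta(w))$ and therefore the column $\delta(w)\cdot\gamma^+$ contains no $a$; hence $(a\cup\delta(w)\cdot\gamma^+)^-=(\delta(w)\cdot\gamma^+)^-$. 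The commutation lemma turns this into $\delta(w)^-\cdot(\gamma^+)^-$, and finally $(\gamma^+)^-=\gamma$ because $\gamma$ omits $z$ (so no truncation occurs in passing up to $\gamma^+$ nor coming back down). This yields $(w\cdot\gamma)^-=\delta(w)^-\cdot\gamma$.

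The main obstacle is the commutation lemma, and specifically the fact that it is \emph{false} for the letter $a$ itself: inserting $a$ into a column whose minimum exceeds $a$ behaves differently from inserting the empty word $a^-$, so the naive commutation breaks exactly there. The whole argument hinges on the observation that $\delta(w)$ never uses $a$, which is precisely what licenses applying the commutation lemma to $v=\delta(w)$. I would therefore isolate this one-letter case analysis as the single genuinely computational step; everything else is a bookkeeping assembly of the cited lemmas.
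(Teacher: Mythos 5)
Your proof is correct and takes essentially the same route as the paper: the first identity is derived exactly as there (Lemma \ref{first-column}, Lemma \ref{N-variant}, and the factorization $\delta(wu)=\delta(w)u^+$, which the paper merely asserts and you justify via the product formula from the proof of Lemma \ref{deltaN}), and the second identity is then deduced from the first. The paper compresses that deduction into ``which implies the lemma''; your commutation lemma $(v\cdot\beta)^-=v^-\cdot\beta^-$ for $a\notin\Supp(v)$ makes explicit precisely the implicit step (equivariance of column insertion under the order isomorphism $x\mapsto x^-$ from $A\setminus a$ to $A\setminus z$), and your isolation of why it fails for the letter $a$ itself is exactly the right point to flag.
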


\begin{proof} By Lemma \ref{first-column}, $w\cdot \gamma$ is the first column of $N(wu)$, where $u$ is the strictly decreasing word having same support as $\gamma$. Since $a$ appears in $w$, $a$ appears in $N(wu)$, necessarily at the bottom of the first column. By Lemma $\ref{N-variant}$, the first column of $N(wu)$ is equal to the first column of $N(\delta(wu))$ with $a$ added at the bottom. 

Now, since $u$ does not involve the letter $z$ and since $w$ involves each letter in $A$, we have $\delta(wu)=\delta(w)u^+$, where $u^+$ is obtained by replacing in $u$ each letter by the next one in the alphabet $A$. Hence the first column of $N(\delta(wu))=N(\delta(w)u^+)$ is by Lemma \ref{first-column} equal to $\delta(w)\cdot \gamma^+$.

It follows from the previous remarks that $w\cdot \gamma=a\cup \delta(w)\cdot \gamma^+$, which implies the lemma.
\end{proof}

\begin{lemma}\label{class} $N(w)$ depends only on the class of $w$ modulo $\equiv_{styl}$.
\end{lemma}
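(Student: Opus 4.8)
The plan is to reduce the statement to a compatibility property of the map $\delta$, and then to induct on the number of rows of $N(w)$. The starting observation is Lemma \ref{N-variant}: the first row of $N(w)$ is $\Supp(w)$ (as a strictly increasing word) and the rest of $N(w)$ is $N(\delta(w))$, which has exactly one fewer row. Consequently $N(u)=N(v)$ will follow, by induction on the number of rows, once I know two things: that $\Supp(u)=\Supp(v)$, so the top rows agree, and that $\delta(u)\equiv_{styl}\delta(v)$ in a form to which the induction hypothesis applies, so the remaining tableaux agree. The first point is exactly Lemma \ref{support}. Thus the whole argument rests on showing that $\delta$ carries $\equiv_{styl}$-equivalent words to $\equiv_{styl}$-equivalent words.

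To prove this compatibility I would use the second identity of Lemma \ref{action-}, namely $(w\cdot\gamma)^-=\delta(w)^-\cdot\gamma$, which translates the action of $w$ into the action of $\delta(w)^-$. Since that lemma requires $\Supp(w)$ to be the whole alphabet, I first pass to the common support $B=\Supp(u)=\Supp(v)$ and view $u,v$ as words over $B$. This is legitimate because for a column $\gamma$ contained in $B$ the column insertion of a letter of $B$ never leaves $B$, so $u\equiv_{styl}v$ over $A$ restricts to $u\equiv_{styl}v$ over $B$. Over $B$ the words $u,v$ have full support, so for every column $\gamma$ on $B\setminus\max B$ the cited identity gives $\delta(u)^-\cdot\gamma=(u\cdot\gamma)^-=(v\cdot\gamma)^-=\delta(v)^-\cdot\gamma$; hence $\delta(u)^-\equiv_{styl}\delta(v)^-$ over the alphabet $B\setminus\max B$.

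It then remains to remove the downward shift. Every letter of $\delta(w)$ is a bumped letter, hence strictly larger than the inserted letter and in particular different from $\min B$; therefore $\delta(u),\delta(v)$ avoid $\min B$, and the map $w\mapsto w^-$, restricted to such words, is the relabeling induced by the order isomorphism $B\setminus\min B\to B\setminus\max B$. Such a relabeling is an isomorphism of stylic monoids, since column insertion depends only on the order of the alphabet, so it reflects $\equiv_{styl}$: from $\delta(u)^-\equiv_{styl}\delta(v)^-$ I recover $\delta(u)\equiv_{styl}\delta(v)$ over the alphabet $B\setminus\min B$. Since $N(\delta(u))$ has one fewer row than $N(u)$, the induction hypothesis applied over this smaller alphabet yields $N(\delta(u))=N(\delta(v))$, and stacking the common first row $B$ on top gives $N(u)=N(v)$; the base case of the empty word is immediate from Lemma \ref{support}.

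The main obstacle is the alphabet bookkeeping: Lemma \ref{action-} speaks only about words of full support and compares only the shifted columns $(w\cdot\gamma)^-$, so the delicate points are (a) justifying the restriction to the support $B$, (b) verifying that $\delta$-images avoid $\min B$, so that the downward shift is a genuine order isomorphism rather than a lossy operation, and (c) framing the induction over all finite totally ordered alphabets at once, so that the inductive hypothesis can be invoked over the successively smaller and relabeled alphabets produced by $\delta$ and the shift. Once these are in place, turning the identity of Lemma \ref{action-} into the congruence $\delta(u)\equiv_{styl}\delta(v)$ is routine.
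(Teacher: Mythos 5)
Your proof is correct and takes essentially the same route as the paper's: both reduce the statement via Lemmas \ref{N-variant} and \ref{support} to showing that the action of $\delta(w)$ on the smaller alphabet is determined by the action of $w$, both extract this from Lemma \ref{action-}, and both conclude by induction on the alphabet. The only differences are cosmetic---the paper applies the first identity of Lemma \ref{action-} directly (setting $\gamma=\gamma_1^-$ for a column $\gamma_1$ on $A\setminus a$), whereas you use the second identity together with an explicit relabeling isomorphism, and you spell out the restriction to the common support, a reduction the paper leaves implicit.
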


\begin{proof} 
It is enough to show that if $w,w'$ have the same action on the set of columns over $\Supp(w)\cup\Supp(w')$, then $N(w)=N(w')$. Note that by Lemma \ref{support}, the hypothesis implies that they have the same support.

We prove the lemma by induction on $|\Supp(w)\cup \Supp(w')|$; the case where it is empty is clear. Suppose now that $A=\Supp(w)\cup \Supp(w')$ is nonempty and let $a=\min(A)$. 
By hypothesis, $w,w'$ have the same action on $\mathcal C(A)$. 

By Lemma \ref{N-variant}, the first row of $N(w)$, viewed as a set, is $\Supp(w)$, and the remaining tableau is $N(\delta(w))$. Hence the first rows of $N(w)$ and $N(w')$ are equal. 
Note that every letter of $\delta(w)$ and $\delta(w')$ is in the alphabet $A \setminus a$;
hence $\Supp(\delta(w))\cup\Supp(\delta(w'))\subseteq A\setminus a$.

We claim that the action of $\delta(w)$ on $\mathcal C(A\setminus a)$ depends only on the action of $w$ on $\mathcal C(A)$. Indeed, let $\gamma_1$ be a column on $A\setminus a$. Then $\gamma=\gamma_1^-$ is a column on $A\setminus z$, where $z=\max(A)$; note also that $\gamma^+=\gamma_1$, hence by Lemma \ref{action-}, $a\cup \delta(w)\cdot\gamma_1=w\cdot \gamma$, which implies $\delta(w)\cdot \gamma_1=(w\cdot \gamma)\setminus a$.

The claim is also true for $\delta(w')$, so that $\delta(w)$ and $\delta(w')$ have the same action of $\mathcal C(A\setminus a)$. Hence, they have the same action on the set of columns over $\Supp(\delta(w))\cup\Supp(\delta(w'))$. By induction $N(\delta(w))=N(\delta(w'))$. It follows that $N(w)=N(w')$ by Lemma \ref{N-variant}.
\end{proof}

\begin{lemma}\label{NT} Let $T$ be an $N$-tableau. Then $N(RW(T))=T$.
\end{lemma}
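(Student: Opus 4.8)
The plan is to induct on the number of rows of $T$, peeling off the bottom row by means of Lemma \ref{N-variant}, which tells us that the first row of $N(w)$ is $\Supp(w)$ (as a strictly increasing word) and that the rest of $N(w)$ is $N(\delta(w))$. Concretely, I would write the $N$-filtration of $T$ as $A_1\supset A_2\supset\cdots\supset A_k$, where $A_i$ is the $i$-th row, so that $A_1$ is the bottom, full-support row and $\min(A_1)<\min(A_2)<\cdots$. Reading the rows from the top down, the row-word is $r(T)=s(A_k)s(A_{k-1})\cdots s(A_1)$, where $s(B)$ denotes the increasing product of the elements of $B$. Since $\Supp(r(T))=A_1$, Lemma \ref{N-variant} gives at once that the first row of $N(r(T))$ is $A_1$, which is the first row of $T$. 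It then remains to show $N(\delta(r(T)))=T'$, where $T'$ is $T$ with its bottom row removed, that is, the $N$-tableau with filtration $A_2\supset\cdots\supset A_k$.

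The heart of the argument is the evaluation of $\delta(r(T))$ modulo $\equiv_{styl}$. I would apply Lemma \ref{deltaN} with $x=1$ and $u_i=s(A_i)$, $U_i=A_i$; the hypotheses hold because the supports $A_1\supseteq\cdots\supseteq A_k$ decrease for inclusion. This yields $\delta(r(T))\equiv_{styl}\prod_{i=k}^{i=1}s(\D_{A_{i+1}}(A_i))$, with the convention $A_{k+1}=\emptyset$. The crucial simplification is that, for each $i$, one has $A_{i+1}\subset A_i$ and $\min(A_{i+1})>\min(A_i)$, which are exactly the two $N$-filtration conditions; by the remark following the definition of $\D$, this forces $\D_{A_{i+1}}(A_i)=A_{i+1}$. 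Hence $\delta(r(T))\equiv_{styl}s(A_k)s(A_{k-1})\cdots s(A_2)=r(T')$.

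Finally, Lemma \ref{class} guarantees $N(\delta(r(T)))=N(r(T'))$, and the inductive hypothesis applied to $T'$, which has one fewer row, gives $N(r(T'))=T'$. Combining this with the identification of the first row completes the proof; the base case $k=0$ (or $k=1$) is immediate. I expect the only real obstacle to be the step involving Lemma \ref{deltaN}: one must present the row-word in the precise factored form that lemma requires and verify that the $N$-filtration conditions are exactly what collapse each $\D_{A_{i+1}}(A_i)$ to $A_{i+1}$. Once that collapse is observed, the telescoping of the product back to $r(T')$ and the induction are routine.
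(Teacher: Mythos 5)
Your proof is correct and follows essentially the same route as the paper's: apply Lemma \ref{deltaN} with $x=1$ to the row-word, collapse each $\D_{A_{i+1}}(A_i)$ to $A_{i+1}$ via the remark after the definition of $\D$, conclude $\delta(r(T))\equiv_{styl} r(T')$, and finish with Lemma \ref{class}, Lemma \ref{N-variant}, and induction on the number of rows. The only cosmetic difference is that the paper drops the $i=k$ factor from the product explicitly, whereas your uniform claim still covers it since $\D_{\emptyset}(A_k)=\emptyset$ makes that factor the empty word.
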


\begin{proof}
Let $T$ have $k$ rows, and let $u_1,\ldots,u_k$ be the row-words of the rows from $i=1$ to $i=k$; moreover, let $U_i=\Supp(u_i)$. Then $RW(T)=u_k\cdots u_1$. By Lemma \ref{deltaN}, with $x=1$, we have $\delta(RW(T))=\delta(u_k\cdots u_1)\equiv_{styl}  \prod_{i=k-1}^{i=1}\sigma(\D_{U_{i+1}}(U_{i}))$ (since the factor for $i=k$ is the empty word). Now, by (\ref{DBCB}), one has $\D_{U_{i+1}}(U_{i})=U_{i+1}$, since $U_{i+1}\subseteq U_i$ and $\min(U_i)<\min(U_{i+1})$; therefore $\sigma(\D_{U_{i+1}}(U_{i}))=u_{i+1}$. 
Hence $\delta(RW(T))\equiv_{styl}u_k\ldots u_2=RW(T')$, the row word of the $N$-tableau $T'$ obtained by removing the first 
row from $T$. It follows from Lemma \ref{class} that $N(\delta(RW(T)))=N(RW(T'))$; by induction, this is the 
$N$-tableau $T'$. By Lemma \ref{N-variant}, we deduce that $N(RW(T))$ is equal to $T$, since the support of $RW(T)$ is equal to that of $T$ and therefore to the first row of $T$.
\end{proof}

\begin{proof}[Proof of Theorem \ref{bijection}]
The mapping is well-defined by Lemma \ref{class}. 
Surjectivity follows from Lemma \ref{NT}.

The mapping is injective, since, using Lemma \ref{infl} and its notations, one has $w\equiv_{styl}w'$ by Lemma 
\ref{idemp2}. And $w'\equiv_{plax}RW(P(w'))$ by Section \ref{plactic}, 
and finally $RW(P(w'))\equiv_{styl}RW(N(w))$, by Lemma \ref{idemp2} and Lemma \ref{infl}.
Thus \begin{equation}\label{wequivrNw}
w\equiv_{styl} RW(N(w)),
\end{equation}
which proves injectivity.
\end{proof}

\begin{corollary}\label{right-insert} Let $T$ be an $N$-tableau and $x$ a letter. Then $(T\leftarrow x)=N(RW(T)x)$.
\end{corollary}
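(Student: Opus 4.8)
The plan is to reduce the statement to Lemma \ref{NT} together with the purely definitional fact that the $N$-algorithm processes a word one letter at a time, from left to right, so that the computation of $N(r(T)x)$ can be factored through the intermediate tableau obtained from $r(T)$.

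First I would recall the recursive definition of $N$-insertion of a word: for $w=a_1\cdots a_n$, the tableau $N(w)$ is obtained by $N$-inserting $a_1$, then $a_2$, \ldots, then $a_n$ into the empty $N$-tableau. The key observation is that this insertion of a concatenation $uv$ coincides with $N$-inserting the letters of $v$, one by one, into the tableau $N(u)$ already built from $u$; indeed, each single-letter $N$-insertion depends only on the current tableau and not on how it was produced, so the process splits at any prefix boundary. Applying this with $v$ a single letter, I would write $r(T)=b_1\cdots b_m$ for the row-word of $T$ and split the insertion of $w=r(T)x$ after its $m$-th letter: inserting $b_1,\ldots,b_m$ into the empty tableau yields $N(r(T))$, and inserting the remaining letter $x$ into that tableau yields, by the very meaning of the notation $T'\leftarrow x$, the tableau $N(r(T))\leftarrow x$. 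Hence $N(r(T)x)=N(r(T))\leftarrow x$.

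Finally I would invoke Lemma \ref{NT}, which asserts $N(r(T))=T$ for every $N$-tableau $T$. Substituting this identity into the displayed equality gives $N(r(T)x)=T\leftarrow x$, which is exactly the claim.

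There is essentially no hard step: the entire combinatorial content is already packaged in Lemma \ref{NT}, and what remains is only unwinding the definition of $N$-insertion of a word. The one point demanding a moment's care is verifying that the left-to-right, letter-by-letter definition of $N(\cdot)$ genuinely permits factoring the computation through $N(r(T))$; but this is immediate from the recursion and needs no argument about bumping, so I expect the proof to be short and free of obstacles.
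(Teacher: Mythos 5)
Your proposal is correct and matches the paper's own proof exactly: the paper also observes that by definition of $N$-insertion $N(r(T)x)=N(r(T))\leftarrow x$, and then concludes via Lemma \ref{NT}. Your additional remark that single-letter insertion depends only on the current tableau, justifying the factorization at the prefix boundary, is a fair (if routine) elaboration of what the paper leaves implicit.
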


\begin{proof} By definition of the $N$-insertion, $N(RW(T)x)=(N(RW(T))\leftarrow x)=(T\leftarrow x)$, by Lemma \ref{NT}.
\end{proof}

\begin{corollary}\label{wdeltaSupp} Let $w\in A^*$. Then $w\equiv_{styl} \delta(w)\sigma(\Supp(w))$.
\end{corollary}

\begin{proof} Let $u_1,\ldots,u_k$ the increasing words corresponding to the rows of $T=N(w)$, from the longest row to the shortest. Then $RW(T)=u_k\cdots u_1$. Let $T'$ obtained from $T$ by removing the first row; then $RW(T')=u_k\cdots u_2$. Moreover, $T'=N(\delta(w))$ by Lemma \ref{N-variant}. By (\ref{wequivrNw}), $w\equiv_{styl}RW(N(w))=u_k\cdots u_1=RW(T') u_1\equiv_{styl} \delta(w) \sigma(\Supp(w))$, by (\ref{wequivrNw}).
\end{proof}

\section{Cardinality and presentation of the stylic monoid}\label{card}

Recall that the {\it Bell number} $B_n$ is the number of partitions of a set with $n$ elements. The first few values, starting with $n=1$, are $1,2,5,15,52,203,877$.

\begin{theorem}\label{main} (i) If the cardinality of $A$ is $n$, then the cardinality of $\Styl(A)$ is $B_{n+1}$.

(ii) $\Styl(A)$ is presented by the plactic relations and the relations $x^2=x$, $x\in A$.
\end{theorem}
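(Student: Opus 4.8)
The plan is to prove both parts together by establishing a bijection between $N$-tableaux on $A$ and set partitions of a set with $n+1$ elements, and separately to verify that the idempotent-plactic presentation defines exactly $\equiv_{styl}$. For the cardinality (part (i)), I would exploit the bijection of Theorem \ref{bijection} between $\Styl(A)$ and $N$-tableaux, together with the correspondence (already noted in the excerpt) between $N$-tableaux whose support is $A$ and $N$-filtrations $A=A_1\supsetneq A_2\supsetneq\cdots\supsetneq A_k$. Since a general $N$-tableau need not have support all of $A$, I would first reduce to the full-support case: an $N$-tableau with support $S\subseteq A$ is the same as an $N$-filtration of $S$, so $|\Styl(A)|=\sum_{S\subseteq A}(\text{number of }N\text{-filtrations of }S)$. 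The core combinatorial step is then to biject $N$-filtrations (over all supports) with set partitions of $[n+1]=\{1,\ldots,n,n+1\}$. The natural idea: given a partition $\pi$ of $[n+1]$, look at the block structure relative to the distinguished element $n+1$ and the positions $1,\ldots,n$ indexing the letters of $A$; a strictly decreasing chain of subsets records, for each letter, the first row from which it is absent, which is exactly the data of a set partition once one accounts for the element $n+1$ as a ``terminal'' marker. I would make this precise by encoding the filtration $A_1\supset A_2\supset\cdots$ as the partition whose blocks are the fibers of the map sending each of the $n$ letters to the index of the row where it disappears, with the extra element $n+1$ absorbing the empty-from-the-start data; a careful count shows the number of such encodings is $B_{n+1}$ by the standard recurrence $B_{n+1}=\sum_k\binom{n}{k}B_{n-k}$.

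For the presentation (part (ii)), let $M$ be the monoid presented by $A$ subject to the plactic relations and $x^2=x$. There is a surjective homomorphism $M\twoheadrightarrow\Styl(A)$, because all these relations hold in $\Styl(A)$ (the plactic relations by Proposition \ref{plax-styl}, and $x^2=x$ by Lemma \ref{idemp2}). To show it is an isomorphism it suffices to show $|M|\le|\Styl(A)|$, i.e. that every element of $M$ has a canonical representative indexed by an $N$-tableau. I would argue that in $M$ every word is equivalent to the row-word of an $N$-tableau: starting from any $w$, pass to $r(P(w))$ using the plactic relations, then repeatedly apply $x^2=x$ to delete repeated letters within rows; the computation in the excerpt (the chain $\underline{c}abd\equiv_{styl}\cdots\equiv_{styl}cdab$) illustrates exactly the kind of reduction, and Lemma \ref{infl} together with equation \eqref{wequivrNw} shows that the resulting $N$-tableau representative $r(N(w))$ is reached using only plactic relations and $x^2=x$. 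Thus the map from $N$-tableaux to $M$ is onto, giving $|M|\le|\Styl(A)|$, and combined with surjectivity onto $\Styl(A)$ we get equality and hence the presentation.

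The main obstacle I anticipate is the explicit bijection with partitions of $[n+1]$ in part (i): it is easy to count $N$-filtrations of a fixed support by hand for small $n$, but producing a clean bijective encoding that manifestly yields $B_{n+1}$ (rather than merely matching the first few Bell numbers) requires choosing the right distinguished $(n+1)$-st element and verifying the map is a bijection in both directions, including the treatment of varying supports and the empty filtration. A secondary subtlety in part (ii) is ensuring the reduction to the canonical $N$-tableau representative uses \emph{only} the listed defining relations of $M$ and not facts proven using the action; the safe route is to quote equation \eqref{wequivrNw}, whose proof already factors through Lemma \ref{idemp2} and the plactic equivalence $w'\equiv_{plax}r(P(w'))$, both available inside $M$.
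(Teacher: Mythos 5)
Your proposal is correct and follows essentially the same route as the paper: your part (i) is the paper's Lemma \ref{bij1} (the blocks-ordered-by-minima inversion that you flag as the main obstacle is exactly what that lemma supplies, using that the row minima form the strictly increasing first column, and your partition-of-$[n+1]$ encoding via the distinguished element is just the bijective form of the identity $\sum_k\binom{n}{k}B_k=B_{n+1}$ that the paper simply quotes), while your part (ii) is the paper's argument recast as a cardinality count, resting on the same chain through inflation (Lemma \ref{infl}) and Eq.~(\ref{wequivrNw}). One caution: your first formulation of the reduction --- pass to $r(P(w))$ by plactic moves and then delete repeated letters within rows via $x^2=x$ --- would not by itself reach $r(N(w))$, since $P(w)$ and $N(w)$ can have rows with different supports (the paper's Figures \ref{cabd} and \ref{cdab} exhibit two distinct tableaux with strictly increasing rows in the same stylic class), so the inflation $w\mapsto w'$ making the rows of $P(w')$ match those of $N(w)$ is essential; your ``safe route'' of quoting Eq.~(\ref{wequivrNw}) and checking its proof uses only the listed relations is precisely what the paper does.
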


We call {\it stylic relations} the plactic relations together with the relations $x^2=x$, $x\in A$. Denote by $\Part(E)$ the set of partitions on a set $E$.

\begin{lemma}\label{bij1} To each $N$-tableau $T$ on $A$, associate the partition $R$ of the set $\Supp(T)$ obtained as follows: 
denoting the rows of $T$ by $R_i$, $i=1,\ldots,k$, from the longest to the shortest, and viewing them as subsets of $A$, 
the parts of $R$ are $R_k,R_{k-1}\setminus R_k,\ldots,R_1\setminus R_2$. This mapping is a bijection
from the set of $N$-tableaux on $A$ onto the set $\bigcup_{B\subseteq A}\Part(B)$. The inverse mapping is defined as follows: 
let $R=\{B_1,\ldots,B_k\}$, ordered in such a way that $\min(B_1)<\ldots<\min(B_k)$; then the rows of the associated 
$N$-tableau, viewed as subset of $A$, are the sets $\bigcup_{i\leq j\leq k}B_j$, $i=1,\ldots,k$.
\end{lemma}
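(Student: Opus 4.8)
The plan is to factor the claimed bijection through the correspondence between $N$-tableaux and $N$-filtrations established in Section \ref{N-tab}, so that the statement reduces to the elementary fact that a strictly decreasing chain of subsets is the same datum as a set partition of its top element, recorded by successive differences. Writing $B=\Supp(T)$, I first note that the longest (bottom) row $R_1$ equals $B$, since every row is contained in $R_1$; hence, reading the rows of $T$ as subsets (the $N$-filtration of $T$, with $A$ replaced by $B=\Supp(T)$, the conditions being intrinsic to this subset), the tableau $T$ is encoded by the chain $R_1\supset R_2\supset\cdots\supset R_k$ with $R_1=B$ and $\min(R_1)<\min(R_2)<\cdots<\min(R_k)$, the last being the first-column condition. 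Setting $R_{k+1}=\emptyset$ and $B_i=R_i\setminus R_{i+1}$, the parts $R_k,\,R_{k-1}\setminus R_k,\dots,R_1\setminus R_2$ listed in the statement are exactly these successive differences $B_k,B_{k-1},\dots,B_1$.

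I would then verify directly that $R=\{B_1,\dots,B_k\}$ is a set partition of $B$: the $B_i$ are pairwise disjoint, their union telescopes to $R_1=B$, and each $B_i$ is nonempty precisely because the chain is strictly decreasing. The identification of orderings is the crux of matching this with the stated inverse: since $\min(R_i)<\min(R_{i+1})$, the minimum of $R_i$ cannot lie in $R_{i+1}$, so $\min(B_i)=\min(R_i)$; consequently the parts are already indexed so that $\min(B_1)<\cdots<\min(B_k)$, which is exactly the ordering convention used to define the inverse map. Under this indexing one has $R_i=\bigcup_{i\leq j\leq k}B_j$, so the row-sets produced by the inverse map coincide with $R_1,\dots,R_k$.

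For the reverse direction I would start from an arbitrary partition $\{B_1,\dots,B_k\}$ of a subset $B\subseteq A$, ordered so that $\min(B_1)<\cdots<\min(B_k)$, and set $R_i=\bigcup_{i\leq j\leq k}B_j$. The chain $R_1=B\supset R_2\supset\cdots\supset R_k$ is strictly decreasing because the parts are nonempty, and the only point requiring a short argument is that its minima strictly increase: since the parts are ordered by minima, $\min(R_{i+1})=\min(B_{i+1})$, and as $R_i=B_i\cup R_{i+1}$ with $\min(B_i)<\min(B_{i+1})$, we get $\min(R_i)=\min(B_i)<\min(B_{i+1})=\min(R_{i+1})$. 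Thus $(R_i)$ is a genuine $N$-filtration, so the inverse map does land among $N$-tableaux, and the telescoping identities $B_i=R_i\setminus R_{i+1}$ and $R_i=\bigcup_{j\geq i}B_j$ show the two maps are mutually inverse. I expect the main, though modest, obstacle to be exactly this compatibility of orderings — checking that sorting the parts by their minima is what forces the partial-union chain to satisfy the strict-minimum condition characterizing $N$-filtrations; once that is pinned down, the rest is routine set bookkeeping resting on the already-proven $N$-tableau/$N$-filtration correspondence.
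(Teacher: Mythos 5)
Your proof is correct and follows essentially the same route as the paper, which likewise reduces the lemma to the $N$-tableau/$N$-filtration correspondence of Section \ref{N-tab}; the paper's proof is just that one-line citation, while you additionally spell out the successive-differences bookkeeping (in particular $\min(B_i)=\min(R_i)$ and the strict-minima check for the partial-union chain) that the paper leaves to the reader.
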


As an illustration, consider the $N$-tableau of Figure \ref{Tableau}, whose rows are $\{a,b,c,d,e\}, \{b,d,e\},\{d,e\}$: then $R=\{\{a,c\}, \{b\}, \{d,e\} \}$. 

\begin{proof} This follows from the bijection between $N$-tableaux and $N$-filtrations, as stated at the beginning of Section \ref{tab}.
\end{proof}

\begin{proof}[Proof of Theorem \ref{main}] (i) The cardinality of $\Styl(A)$ is equal by Theorem \ref{bijection} to the number 
of $N$-tableaux on $A$. This number is by Lemma \ref{bij1} equal to $\sum_{B\subseteq A}|\Part(B)|=\sum_k\binom{n}{k}B_k$, which is well-known to be equal to $B_{n+1}$.

(ii) By Corollary \ref{plax-styl} and Lemma \ref{idemp2}, the stylic relations are satisfied in $\Styl(A)$. 

Conversely, denote by $\equiv$ the congruence of $A^*$ generated by the stylic relations.
Suppose that $u\equiv_{styl}v$;  it is enough to show that $u\equiv v$.
We have by Lemma \ref{class}, $N(u)=N(v)$.
We have $u\equiv u', v\equiv v'$, where $u',v'$ are some inflation of $u,v$ respectively, as 
indicated in Lemma \ref{infl}; by this lemma, and the idempotence of the generators, we have $RW(N(u))\equiv RW(P(u')), 
RW(N(v))\equiv RW(P(v'))$. We have by Section \ref{plactic}, $u'\equiv RW(P(u')), v'\equiv RW(P(v'))$, since $\equiv_{plax}$ 
implies $\equiv$. In conclusion, we have $u\equiv u'\equiv RW(P(u')) \equiv RW(N(u)) = RW(N(v)) \equiv RW(P(v'))
\equiv v' \equiv v$.
\end{proof}

The proof also yields the following corollary.

\begin{corollary}\label{repres} The set of words of the form $RW(T)$, $T$ an $N$-tableau on $A$, is a set of unique representatives of the stylic classes.
\end{corollary}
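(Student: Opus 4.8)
The plan is to show that the assignment $T \mapsto r(T)$ selects exactly one word from each stylic class, i.e.\ that the bijection $\Styl(A) \ni \mu(w) \mapsto N(w)$ of Theorem \ref{bijection} and the map sending an $N$-tableau $T$ to the stylic class of $r(T)$ are mutually inverse. Concretely, two things must be checked: (a) every stylic class contains at least one word of the form $r(T)$; and (b) distinct $N$-tableaux yield row-words lying in distinct stylic classes. Establishing both says precisely that $\{r(T) : T \text{ an } N\text{-tableau on } A\}$ is a transversal of the stylic congruence.

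For part (a), I would take an arbitrary stylic class, represented by some word $w$, and set $T = N(w)$, which is an $N$-tableau. Equation \eqref{wequivrNw}, obtained in the proof of Theorem \ref{bijection}, gives $w \equiv_{styl} r(N(w)) = r(T)$, so $r(T)$ lies in the class of $w$. Hence the words $r(T)$, as $T$ ranges over all $N$-tableaux, exhaust the stylic classes, and the representing set is complete.

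For part (b), I would suppose $r(T) \equiv_{styl} r(T')$ for two $N$-tableaux $T$ and $T'$, and then apply the $N$-algorithm. By Lemma \ref{class}, the tableau $N(\cdot)$ depends only on the stylic class of its argument, so $N(r(T)) = N(r(T'))$. But Lemma \ref{NT} reconstructs the tableau from its row-word, namely $N(r(T)) = T$ and $N(r(T')) = T'$, forcing $T = T'$. Thus no two distinct $N$-tableaux can produce stylically equivalent row-words, and the representing set is irredundant.

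I do not expect any genuine obstacle, since all the inputs are already available: the identity $w \equiv_{styl} r(N(w))$, the invariance of $N$ on stylic classes (Lemma \ref{class}), and the reconstruction $N(r(T)) = T$ (Lemma \ref{NT}). The only point that deserves care is the irredundancy step (b), where invoking Lemma \ref{NT} is essential: without it one would only know that the row-words cover every class, not that they are pairwise inequivalent, which is exactly the content that upgrades a generating family of words to a set of class representatives.
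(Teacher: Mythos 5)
Your proof is correct and follows essentially the same route as the paper, which derives the corollary from the proof of Theorem \ref{main} using exactly the ingredients you invoke: the relation $w\equiv_{styl} r(N(w))$ of Eq.~(\ref{wequivrNw}), the class-invariance of $N$ (Lemma \ref{class}), and the reconstruction $N(r(T))=T$ (Lemma \ref{NT}). Your explicit split into completeness and irredundancy is just a slightly more detailed write-up of the same argument packaged by the bijection of Theorem \ref{bijection}.
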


\begin{corollary} Let $B\subseteq A$. The natural injection $B^*\to A^*$ induces an injection $\Styl(B)\to\Styl(A)$. In other 
words, if two words $u,v$ in $B^*$ have the same action on $\C(B)$, then they have the same action on $\C(A)$.
\end{corollary}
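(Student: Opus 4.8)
The plan is to exhibit the natural map explicitly and to separate the claim into a well-definedness part (which is exactly the implication stated in the ``in other words'') and an injectivity part (which will be almost immediate). Write $\equiv_{styl}^{B}$ and $\equiv_{styl}^{A}$ for the stylic congruences on $B^{*}$ and on $A^{*}$, so that $\Styl(B)=B^{*}/\!\equiv_{styl}^{B}$ and $\Styl(A)=A^{*}/\!\equiv_{styl}^{A}$. The inclusion $\iota\colon B^{*}\to A^{*}$ is a monoid homomorphism, and the candidate map is the composite $B^{*}\xrightarrow{\iota}A^{*}\xrightarrow{\mu}\Styl(A)$; I would show that it factors through $\Styl(B)$ and that the induced map $\phi\colon\Styl(B)\to\Styl(A)$ is injective.

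For well-definedness — that is, for the displayed implication $u\equiv_{styl}^{B}v\Rightarrow u\equiv_{styl}^{A}v$ for $u,v\in B^{*}$ — I would invoke the presentation from Theorem \ref{main}(ii). Since $B$ is a subset of $A$ carrying the induced total order, every stylic relation on $B$ (a plactic relation such as $bac\equiv bca$ for letters $a<b<c$ in $B$, or an idempotent relation $x^{2}=x$ for $x\in B$) is verbatim a stylic relation on $A$. Hence any derivation witnessing $u\equiv_{styl}^{B}v$ by rewriting inside $B^{*}$ is also a valid derivation inside $A^{*}$, so $\equiv_{styl}^{B}$ is contained in the restriction of $\equiv_{styl}^{A}$ to $B^{*}\times B^{*}$. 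This makes $\phi$ a well-defined monoid homomorphism and gives precisely the ``in other words'' statement.

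For injectivity I would use that the action on columns is computed locally. A column on $B$ is in particular a column on $A$, so $\C(B)\subset\C(A)$; and for a letter $x\in B$ and a column $\gamma\subset B$, the column insertion of $x$ into $\gamma$ only compares $x$ with the elements of $\gamma$ and never leaves $B$. Consequently, for $u\in B^{*}$ and $\gamma\in\C(B)$, the value $u\cdot\gamma$ computed in $\C(A)$ coincides with the one computed in $\C(B)$. Thus if $u,v\in B^{*}$ satisfy $u\equiv_{styl}^{A}v$, then restricting the equality $u\cdot\gamma=v\cdot\gamma$ to $\gamma\in\C(B)$ yields $u\equiv_{styl}^{B}v$; equivalently, $\phi$ is injective. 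The main obstacle is conceptual and lies entirely in the well-definedness direction: that $B$-stylic equivalence forces $A$-stylic equivalence is not visible from the action on $\C(A)$ alone, because a column on $A$ may involve letters outside $B$ whose bumping behaviour under letters of $B$ is a priori delicate; the presentation theorem is exactly what lets me bypass a direct (and considerably more involved) analysis of that behaviour.
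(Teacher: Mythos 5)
Your proof is correct, and it isolates the two halves of the statement exactly as one should; the interesting point is that you and the paper use different tools for the injectivity half. The paper's argument there is purely syntactic: it observes that the stylic presentation is \emph{support-preserving} (no elementary plactic move, and no move $x^2\equiv_{styl}x$, changes the set of letters of a word), so a chain of elementary moves in $A^*$ connecting two words of $B^*$ passes only through words with support contained in $B$ and is therefore already a derivation over the alphabet $B$; hence $u\equiv_{styl}v$ in $\Styl(A)$ forces $u\equiv_{styl}v$ in $\Styl(B)$. You instead argue semantically, via the locality of column insertion: for $u\in B^*$ and $\gamma\in\C(B)\subset\C(A)$, the column $u\cdot\gamma$ is the same whether computed inside $\C(B)$ or inside $\C(A)$, so equal actions on $\C(A)$ restrict to equal actions on $\C(B)$. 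Both arguments are valid; yours buys independence from the presentation for that direction (it uses only the definition of the action and Eqs.\ of insertion), while the paper's buys a one-line syntactic proof that stays entirely within the rewriting framework and never needs the compatibility of the two actions. For the well-definedness half --- which is precisely the ``in other words'' clause, and the direction the paper flags with ``a direct proof seems not evident'' --- you argue exactly as the paper does (implicitly): the stylic relations over $B$, with its induced order, are verbatim stylic relations over $A$, so by Theorem \ref{main}(ii) every derivation over $B$ transports to $A$. One further merit of your write-up: the paper's proof sentence only makes the support-preservation (injectivity) direction explicit, whereas you state and justify both directions, which is the more complete account.
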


A direct proof of the latter assertion seems not evident.

\begin{proof} This follows since the presentation is support-preserving: if one applies an elementary plactic move, or a move according to $x^2\equiv_{styl} x$, the alphabet does not change. Hence the relations $u\equiv_{styl} v$ in the large alphabet imply the relations in the small alphabet.
\end{proof}

We say that an element $w$ of $\Styl(A)$ is {\it complete} if its support is equal to $A$.

\begin{corollary} If $|A|=n$, then the number of complete elements in $\Styl(A)$ is equal to $B_n$. 
\end{corollary}

\begin{proof} The complete elements correspond in the bijection of Theorem \ref{bijection} to the $N$-tableaux whose support is $A$. Hence their number is $B_n$ by the argument seen in part (i) of the proof of Theorem \ref{main}.
\end{proof}

%
%

\section{Evacuation of partitions}

\subsection{An involution}\label{invol}

Recall that $A$ is a totally ordered finite alphabet. Denote by $\theta$ the unique order-reversing permutation of $A$. It extends uniquely to an anti-automorphism of the free monoid, that we still denote $\theta$. For example, with $A=\{a<b<c<d\}$, $\theta(acdaadc)=baddabd$. The mapping $\theta$ is clearly an involution.

Strictly speaking, $\theta$ depends on $A$ and we denote it $\theta_A$ if necessary. For later use, we note that if $a$ is the smallest element of $A$, and denoting by $i_a:(A\setminus a)^*\to A^*$ the monoid homomorphism sending each letter $x$ in $A\setminus a$ onto the letter that precedes $x$ in the total order of $A$, then
\begin{equation}\label{theta-A-a}
\forall w\in (A\setminus a)^*, \theta_A(w)=i_a\circ\theta_{A\setminus a}(w).
\end{equation}
Both sides are indeed anti-homomorphisms, which coincide on the alphabet $A\setminus a$. Likewise, if $z$ is the largest letter of $A$, and $j_z$ the homomorphism from $(A\setminus z)^*\to A^*$ sending each letter to the next one in the order of $A$, then
\begin{equation}\label{theta-A-z}
\forall w\in (A\setminus z)^*, \theta_A(w)=j_z\circ\theta_{A\setminus z}(w).
\end{equation}

Let us come back to the fixed alphabet $A$ and $\theta=\theta_A$. Clearly, and as is well-known, the plactic relations (see Section \ref{plactic}) are invariant under $\theta$. It follows that $\theta$ induces an anti-automorphism of the plactic 
monoid. Similarly, the stylic relations (see the definition following Theorem \ref{main}) are invariant under $\theta$, and therefore $\theta$ induces an anti-automorphism of 
the stylic monoid. Both anti-automorphisms are involutions, and we denote them with the same notation $\theta$. We thus obtain the commutative diagram of
Figure \ref{commuting}, where the vertical mappings are the canonical quotient homomorphisms. 

\begin{figure}
\begin{center}
\begin{tikzpicture} 
\node at (0, 0) {$A^*$} ;

\node at (0, -2.3) {$\Plax(A)$} ;

\node at (4, -2.3) {$\Plax(A)$} ;

\node at (0, -4.6){$\Styl(A)$};

\node at (4, -4.6){$\Styl(A)$};
    
\node at (4,0) {$A^*$};
  
\node at (2, 0.3) {$\theta$};
  
\node at (2, -2) {$\theta$};
  
\node at (2, -4.2) {$\theta$};
  
\draw [->] (0,-0.3) -- (0,-1.9) ; 
\draw [->] (0,-2.6) -- (0,-4.3) ;
\draw [->] (0.3,0) -- (3.7,0) ;
\draw [->] (0.8,-2.3) -- (3.2,-2.3);
\draw [->] (0.8,-4.6) -- (3.2,-4.6);
\draw [->] (3.9,-0.3) -- (3.9,-2);
\draw [->] (3.9,-2.6) -- (3.9,-4.3);
\end{tikzpicture}
\end{center}
\caption{Commuting homomorphisms and anti-automorphisms}
\label{commuting}
\end{figure}

The plactic monoid is in bijection with Young tableaux. The endomorphism $\theta$ of the plactic monoid is described directly on the set of tableaux by the {\it Schützenberger involution} (\cite[p.127]{Sc}), also called {\it evacuation} (see \cite[3.9]{Sa}, \cite[p.425]{St}). 

We give now a construction on (set-theoretical) partitions, similar to Schützenberger's evacuation, which will be shown to correspond to the involution $\theta$ of the stylic monoid.

Fix the alphabet $A$ and the involution $\theta=\theta_A$. For each nonempty subset $B$ of $A$, we define a mapping $\Delta:\Part(B)\to\Part(B\setminus \min(B))$. 
For this, we order the blocks of each partition on the totally 
ordered set $B$, according to the order of the minimum of the blocks. Therefore, we may speak of the $j$-th block of a partition.

Let $R=\{B_1,B_2,\ldots,B_k\}\in \Part(B)$. Let $x_i=\min(B_i)$; we assume that $x_1<x_2<\cdots<x_k$. 
Let $u_i$ be the strictly increasing word whose support is $B_i$; then $x_i$ is the first letter of $u_i=x_iv_i$.

Consider the word $w=u_1u_2\cdots u_k=x_1v_1x_2v_2\cdots x_kv_k$. We determine an integer $e(R)$ as follows:
\begin{itemize}
\item
Define first $x:=x_1$ and $e:=1$.
\item Look for the smallest letter $y$ at the right of $x$ in $w$: if $y$ is some $x_j$, let $x:=x_j$, $e:=j$ and iterate this step. If $y$ is not an $x_j$, or there is no letter at the right of $x$, then the algorithms stops.
\item Put $e(R):=e$.
\end{itemize}

Let $e=e(R)$. Define $B'_j=(B_j\setminus x_j)
\cup x_{j+1}$ for $j=1,\ldots,e-1$, $B'_e=B_i\setminus x_e$ and $B'_j=B_j$ for $j>e$. 
Then $\Delta(R)$ is the partition whose blocks are the nonempty sets $B'_j$ (only $B'_e$ may be empty, and in this case $e$ must be equal to $k$).

For example, with $A=[8]=\{1,2,3,4,5,6,7,8\}$, and $R=13/28/457/6$  (with evident notations), $w=u_1u_2u_3u_4=(13)(28)(457)(6)$, we have 
$x_1=1,x_2=2,x_3=4, x_4=6$, $e(R)=3$: indeed, at the end of the algorithm, when $x$ is set to $x_3=4$, then $y$ is set to $5$, which is not an $x_j$. Thus $\Delta(R)=23/48/57/6$, which is a partition of the set $[8]\setminus 1=\{2,3,4,5,6,7\}$. This example is also given in another form in Figures \ref{partition}, \ref{Delta1} and \ref{Delta2}.


For each subset $B$ of $A$, the {\it evacuation mapping} $\evac$, from $\Part(B)$ into itself, is then recursively defined as follows. If $B$ is empty and $R\in \Part(B)$, then $\evac(R)=R$ ($R$ is here the empty partition). Otherwise, let $R\in\Part(B)$, $B\subseteq A$, $B$ nonempty. Let $b=\min(B)=\min(R)$. Then, with the notation $e(R)$ above, $\evac(R)$ is the partition on $B$, obtained from $\evac(\Delta(R))$ by adding $\theta_A(b)$ to its $e(R)$-th block (and creating this block if necessary; note that it is then the last block).

Note that the definition of evacuation implies that $\theta(b)$ is the largest letter in $R$, and that
\begin{equation}\label{evac-z}
\evac(\Delta(R))=\evac(R)\setminus \theta(b).
\end{equation}

Denote by $\pi$ the mapping associating to each word $w$ the partition corresponding bijectively to the $N$-tableau $N(w)$, as 
described in Lemma \ref{bij1}; see the example following it. 

\begin{theorem}\label{evac} One has $\pi(\theta(w))=\evac(\pi(w))$ for any word $w$.
\end{theorem}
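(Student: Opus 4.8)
The plan is to prove the identity by induction, matching the recursive definition of $\evac$ against the recursive description of the $N$-algorithm furnished by Lemma \ref{N-variant}. Because the stylic presentation is support-preserving (so that $\Styl(\Supp w)$ embeds in $\Styl(A)$), and because $\theta_A$ and $\theta_{\Supp w}$ are intertwined by the shift homomorphisms of \eqref{theta-A-a} and \eqref{theta-A-z}, I would first reduce to the case of a \emph{complete} word, i.e. $\Supp(w)=A$. In that case both $\pi(w)$ and $\pi(\theta w)$ are partitions of $A$, and writing $a=\min A$, $z=\max A=\theta(a)$, the two sides of the claimed equality are partitions of $A$ whose distinguished letters $a$ and $z$ can be tracked.

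Next I would set the two recursions side by side. On one side, $\evac(\pi w)$ is obtained from $\evac(\Delta(\pi w))\in\Part(A\setminus z)$ by adjoining $\theta(a)=z$ to its $e(\pi w)$-th block; here $\Delta$ deletes $a=\min$ and the integer $e(\pi w)$ records a propagation path through the blocks. On the other side, Lemma \ref{N-variant} says the first row of $N(\theta w)$ is all of $A$ and the remaining tableau is $N(\delta(\theta w))$; equivalently, the block of $\pi(\theta w)$ containing $a=\min$ is $A\setminus\Supp(\delta(\theta w))$, and deleting it yields $\pi(\delta(\theta w))$, a partition of a subset of $A\setminus a$. The heart of the argument is therefore a \emph{commutation lemma} identifying $\delta(\theta_A w)$, modulo $\equiv_{styl}$, with the canonical word of $\Delta$ applied to $\pi(w)$ after the alphabet shift $i_a$ of \eqref{theta-A-a}; together with the identification of the propagation integer $e(\pi w)$ with the index of the block of $\pi(\theta w)$ into which the largest letter $z$ is placed. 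Granting these two facts, the inductive hypothesis applied to the smaller alphabet $A\setminus a$ (via \eqref{theta-A-a}) closes the induction, since then $\evac(\Delta(\pi w))=\pi(\delta(\theta w))$ and the two copies of $z$ land in the same block.

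To organize and prove the commutation lemma cleanly, I would follow the classical treatment of Sch\"utzenberger evacuation on skew shapes with a hole, transported to our setting: partitions are replaced by compositions ordered as in the paper, $N$-tableaux by standard immaculate tableaux, and the $N$-algorithm by a growth process whose cells are filled by local rules. In this language $\theta$ is realized as the $180^\circ$ rotation of the growth array (reversal of the word together with reversal of the alphabet), and the content of the theorem is exactly that reading the boundary of the rotated diagram reproduces $\evac$. The integer $e(R)$ is then the exit row of the jeu-de-taquin-like slide triggered by deleting the minimal corner, which is precisely what the rotation transports to the placement of $z=\theta(a)$.

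I expect the main obstacle to be the verification of the local (forward and backward) growth rules for compositions and their compatibility with the rotation --- equivalently, the precise matching of $e(R)$ with the slide path in the commutation lemma. Unlike the partition case, compositions carry no conjugation symmetry, so the rules are genuinely asymmetric (mirroring the asymmetry between left and right $N$-insertion emphasized earlier), and one must check by hand that $\Delta$, $e(R)$, and the $\delta$-step of the $N$-algorithm fit together under $\theta$. Once this local compatibility is established, the global statement follows from the functoriality of growth diagrams under the reflection, and the base case ($|A|\le 1$) is immediate.
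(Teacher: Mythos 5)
Your overall architecture (reduce to words of full support via Eqs.~(\ref{theta-A-a}) and (\ref{evac-A-a}), then induct on the alphabet) matches part 2 of the paper's induction, but the step you call the heart of the argument rests on a misalignment of the two recursions, and the identity you plan to prove is false. The recursion defining $\evac$ peels off a \emph{single letter}: by definition $\evac(\Delta(R))=\evac(R)\setminus\theta(b)$, i.e.\ deletion of the largest letter $z$. The recursion of Lemma~\ref{N-variant} peels off the \emph{whole first row} of the $N$-tableau, i.e.\ the entire block of $\pi(\theta w)$ containing $a$. These are different operations, so ``$\evac(\Delta(\pi w))=\pi(\delta(\theta w))$'' cannot serve as the inductive link. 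Concretely, take $A=\{1,2\}$ and $w=21$, so $\theta(w)=21$. Then $\pi(w)=\{\{1\},\{2\}\}$, $\Delta(\pi(w))=\{\{2\}\}$ and $\evac_A(\Delta(\pi(w)))=\{\{1\}\}$, whereas $\delta(\theta(w))=2$ and $\pi(\delta(\theta(w)))=\{\{2\}\}$; likewise $\delta(\theta_A w)=2$ is not stylically equivalent to $i_a$ applied to the row word of $\Delta(\pi(w))$, which is the letter $1$ (wrong support, Lemma~\ref{support}). What is true, and what the paper proves, is $\pi(\theta(w))\setminus z=\evac(\Delta(\pi(w)))$, where $\setminus z$ deletes the letter $z$; its proof goes through the identity $\theta(w)\setminus z=\theta(w\setminus a)$ and the compatibility of $\pi$ with deleting \emph{all} occurrences of an extreme letter (Lemma~\ref{removing}: $\pi(w\setminus a)=\Delta(\pi(w))$ and $\pi(w\setminus z)=\pi(w)\setminus z$). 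The operation your induction needs is $w\mapsto w\setminus a$, not $w\mapsto\delta(w)$, and it forces a double induction on $|w|$ and $|A|$, since $w\setminus a$ is no longer complete.

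Second, even after this repair, knowing $\pi(\theta w)\setminus z$ does not determine $\pi(\theta w)$: one must still locate the block receiving $z$, and your identification of that index with $e(\pi w)$ is exactly the statement you defer to an unproved ``$180^\circ$ rotation of the growth array,'' which, as you note yourself, has no classical conjugation symmetry to lean on in the composition setting. The paper never proves this identification. Instead it establishes three weaker facts about $R_1=\pi(\theta(w))$ and $R_2=\evac(\pi(w))$: they have the same number of blocks (by Schensted's theorem both equal the length of the longest strictly decreasing subword, which is invariant under $\theta$); $R_1\setminus z=R_2\setminus z$ (induction on $|w|$ via $w\setminus a$); and $\Delta(R_1)=\Delta(R_2)$ (induction via $w\setminus z$, together with Corollary~\ref{Deltaev}, $\evac(R\setminus z)=\Delta(\evac(R))$, which is the only place growth diagrams enter, through the left--right symmetry of the evacuation pyramid). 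It then concludes by the rigidity Lemma~\ref{R1R2}: these three data determine the partition. Your proposal contains no analogue of this rigidity step, and without it --- or a fully proved rotation theorem for composition growth diagrams, which would be a substantial new verification --- the argument does not close.
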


In other words, the involutive anti-automorphism $\theta$ of the stylic monoid corresponds at the level of partitions to evacuation of partitions. 
We shall prove the theorem in Section \ref{proof-evac}, after a detour through a generalization of jeu de taquin, which is interesting for itself.

For later use, we note that evacuation, as defined above, depends on the mapping $\theta$, which depends in turn on $A$, and is 
therefore denoted $\evac_A$ if necessary. As for $\theta$, we have the following rules. We use the functions $i_a$ and $j_z$ defined before and after (\ref{theta-A-a}), naturally extended to partitions.

\begin{lemma} Let $a$ (resp. $z$) be the smallest (resp. largest) letter of $A$. 
\begin{equation}\label{evac-A-a}
\forall R\in \Part(B), B\subseteq A\setminus a, \evac_A(R)=i_a\circ\evac_{A\setminus a}(R).
\end{equation}
\begin{equation}\label{evac-A-z}
\forall R\in \Part(B), B\subseteq A\setminus z, \evac_A(R)=j_z\circ\evac_{A\setminus z}(R).
\end{equation}
\end{lemma}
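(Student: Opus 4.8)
The plan is to prove Eq.~(\ref{evac-A-a}) by induction on $|B|$, and to observe that Eq.~(\ref{evac-A-z}) follows by the identical argument, with $j_z$ and Eq.~(\ref{theta-A-z}) in place of $i_a$ and Eq.~(\ref{theta-A-a}). The conceptual point that makes the induction go through is that, in the recursive definition of $\evac$, the integer $e(R)$ and the partition $\Delta(R)\in\Part(B\setminus\min(B))$ depend only on $R$ viewed as a partition of the totally ordered set $B$; they never reference the ambient alphabet $A$ nor the involution $\theta$. Consequently, the \emph{only} place where the alphabet enters the recursion is the final step, where the letter $\theta_A(b)$, with $b=\min(B)$, is adjoined to the $e(R)$-th block.

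I would first dispose of the base case $B=\emptyset$, where $R$, $\evac_A(R)$, $\evac_{A\setminus a}(R)$ and $i_a(\evac_{A\setminus a}(R))$ are all the empty partition. For the inductive step, set $b=\min(B)$; since $B\subseteq A\setminus a$ we have $b\neq a$, so $b$ is a legitimate one-letter word of $(A\setminus a)^*$ and Eq.~(\ref{theta-A-a}) gives $\theta_A(b)=i_a(\theta_{A\setminus a}(b))$. Unfolding the definition of $\evac_A$ once, $\evac_A(R)$ is obtained from $\evac_A(\Delta(R))$ by adjoining $\theta_A(b)$ to its $e(R)$-th block. Since $\Delta(R)\in\Part(B\setminus b)$ is again supported in $A\setminus a$ and has strictly smaller cardinality, the induction hypothesis yields $\evac_A(\Delta(R))=i_a(\evac_{A\setminus a}(\Delta(R)))$.

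Next I would unfold $\evac_{A\setminus a}(R)$ symmetrically: it is $\evac_{A\setminus a}(\Delta(R))$ with $\theta_{A\setminus a}(b)$ adjoined to its $e(R)$-th block, where $e(R)$ and $\Delta(R)$ are literally the same objects as before (this is the alphabet-freeness noted above). The remaining task is to check that $i_a$ commutes with the operation ``adjoin a letter to the $e(R)$-th block'': applying $i_a$ to the partition obtained by adjoining $\theta_{A\setminus a}(b)$ to the $e(R)$-th block of $\evac_{A\setminus a}(\Delta(R))$ yields the partition obtained by adjoining $i_a(\theta_{A\setminus a}(b))$ to the $e(R)$-th block of $i_a(\evac_{A\setminus a}(\Delta(R)))$. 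This holds because $i_a$ restricts to an order isomorphism of $A\setminus a$ onto $A\setminus z$: it preserves the ordering of blocks by their minima, so the index of the $e(R)$-th block is unchanged, and it preserves the property that the adjoined letter is the largest one present, so the enlarged (or newly created last) block keeps its position. Combining this with $\theta_A(b)=i_a(\theta_{A\setminus a}(b))$ identifies $\evac_A(R)$ with $i_a(\evac_{A\setminus a}(R))$, closing the induction.

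The only real care is in that last commutation: one must verify that the bookkeeping of ``which block is the $e(R)$-th, and whether a block is enlarged or a new last block is created'' is transported faithfully by the order isomorphism $i_a$, including the edge case $e(R)=k$ in which $B'_e$ may be empty so that a block is created rather than enlarged. This is where I expect the main (if modest) obstacle to lie; everything else reduces to the observation that $e$ and $\Delta$ are alphabet-free together with a single application of Eq.~(\ref{theta-A-a}) to the one-letter word $b$. The proof of Eq.~(\ref{evac-A-z}) is word-for-word the same, using that $j_z$ is an order isomorphism of $A\setminus z$ onto $A\setminus a$ and that $\min(B)\neq z$ for $B\subseteq A\setminus z$.
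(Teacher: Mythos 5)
Your proposal is correct and follows essentially the same route as the paper's proof: induction on the partition, the observation that $\Delta$ and $e(R)$ are independent of the ambient alphabet, one application of Eq.~(\ref{theta-A-a}) to $b=\min(B)$, and the commutation of $i_a$ with insertion into the $e(R)$-th block. You merely spell out in more detail the commutation step (via $i_a$ being an order isomorphism of $A\setminus a$ onto $A\setminus z$, including the new-block edge case), which the paper states without elaboration.
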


\begin{proof} Note that  the function $\Delta$ is independent of the alphabet. Let $R\in \Part(B)$, $B\subseteq A$, $b=\min(B)=\min(R)$, and let $R'=\Delta(R)$, $e=e(R)$, $R_1=\evac_A(R)$. Then, by definition of evacuation, $R_1$ is obtained from $\evac_A(R')$ by inserting $\theta_A(b)$ into its $e$-th block.

Suppose that  $B\subseteq A\setminus a$. By definition, $\evac_{A\setminus a}(R)$ is obtained from $\evac_{A\setminus a}(R')$ by inserting $\theta_{A\setminus a}(b)$ into its $e$-th block. We have clearly $R'\in \Part(B'), B'\subseteq A\setminus a$; hence by induction, $\evac_A(R')=i_a\circ\evac_{A\setminus a}(R')$; now, since by (\ref{theta-A-a}), $\theta_A(b)=i_a\circ\theta_{A\setminus a}(b)$, inserting $\theta_A(b)$ into the $e$-th block of $\evac_A(R')$ amounts to first inserting $\theta_{A\setminus a}(b)$ into the $e$-th block of
$\evac_{A\setminus a}(R')$ and then applying $i_a$. This proves (\ref{evac-A-a}), and (\ref{evac-A-z}) is proved similarly.
\end{proof}

\subsection{Skew-partitions with a hole}\label{skew-hole}
Comparison of the definitions below with Ferrers diagram, lower poset ideals in $\N^2$, Young tableaux, skew Young tableaux, and paths in Young's lattice may be useful (see \cite{Sa, St}), since what we do now is very similar, after a change of the order on $\N^2$.

Let $\p=\N\setminus 0$. We consider the order on $\p^2$, denoted $
\preceq$, such that the covering relations are $(1,y)\preceq (1,y+1)$ and $(x,y)\preceq (x+1,y)$; its Hasse diagram is 
represented in Figure \ref{order-preceq}, where one increases in the order by going up or to the right (north or east). When we speak of the order on $\p^2$, it will be always the order $\preceq$.

A {\it lower ideal} in a poset $E$ is a subset $I\subseteq E$ such that for 
any elements $\alpha\leq \beta$ in $E$, if $\beta\in I$, then $\alpha\in I$. 

We call {\it $A$-labelling} of a finite poset a bijective increasing mapping from the poset into the totally ordered set $A$. The mapping is indicated by labelling the vertices of the Hasse diagram of the poset. 

It is easy to see that 
a finite lower ideal in $\p^2$ (with the order $\preceq$) corresponds bijectively to a composition: the parts of the composition are the number of points in the ideal with equal $y$-coordinate, starting from the bottom ($y=1$); see Figure \ref{compos} for an example, with the composition $(2,2,3,1)$. 

The order induced on compositions\footnote{Another order on compositions, with more covering relations, has been considered in \cite{BBMD}.} by the inclusion of finite lower ideals of $\p^2$ is easily described by its covering relation $\to$: $C\to C'$ if and only if either $C'$ is obtained by increasing one part of $C$ by 1, or if $C'$ is obtained by adding the new part 1 at the end of $C$ (so that the number of covering compositions of $C$ is one more than the number of parts of $C$). For example, $(2,2,3,1)\to(2,3,3,1)$ and $(2,2,3,1)\to(2,2,3,1,1)$. 

Note that the set of finite lower ideals of  $\p^2$, denoted $\mathcal I$, is a lattice for the inclusion order. For simplicity, we say {\it ideal} instead of ``finite lower ideal of $\p^2$".

\begin{figure}\centering
\begin{minipage}[b]{0.33\linewidth}\centering
$\begin{array}{ccccccccc}
(1,4)&-&(2,4)&-&(3,4)&\cdots\\
|\\
(1,3)&-&(2,3)&-&(3,3)&\cdots\\
| \\
(1,2)&-&(2,2)&-&(3,2)&\cdots\\
| \\
(1,1)&-&(2,1)&-&(3,1)&\cdots
\end{array}$
\caption{
}
\label{order-preceq}
\end{minipage}
\hfill
\begin{minipage}[b]{0.33\linewidth}\centering
$\begin{array}{ccccc}
\circ\\
|\\
\circ&-&\circ&-&\circ\\
| \\
\circ&-&\circ\\
| \\
\circ&-&\circ
\end{array}$
\caption{
}\label{compos}
\end{minipage}
\end{figure}

Let $I$ be an ideal of $(\p,\preceq)$, or equivalently, a composition. Consider an {\it $A$-labelling} of 
$I$, with $I$ considered as a poset with the order $\preceq$. To such a labelling $I\to A$, we associate the partition $\{B_1,\ldots,B_k\}$ of $A$, where $B_i$ is 
the set of labels of the points in $I$ with $y$-coordinate $i$; in the example of Figure \ref{partition}, one has $k=4$ and 
$B_1=\{1,3\},B_2=\{2,8\},B_3=\{4,5,7\},B_4=\{6\}$. Observe that one has necessarily $
\min(B_1)<\min(B_2)<\cdots<\min(B_k)$ since the labelling is increasing. 

Note that a (set-theoretical) partition on a finite totally ordered set $A$ may be uniquely represented by the sequence of its 
blocks $
(B_1,\ldots,B_k)$ with $\min(B_1)<\min(B_2)<\cdots<\min(B_k)$. It follows that increasing $A$-labellings of
ideals of $\p^2$, of cardinality $|A|$, correspond bijectively to partitions of $A$. We call $I$ the {\it shape} of the partition, if the latter corresponds to an $A$-labelling of $I$.

Call {\it path} in a poset  a sequence of elements such that each element covers the previous one. Note that an $A$-labelling (hence a partition) is equivalent to a path in $\mathcal I$, starting 
from the singleton $\{(1,1)\}$; equivalently, to a path of compositions $C_1\to\cdots\to C_k$ with $C_1=(1)$; for 
example, in Figure \ref{partition}, it is the sequence 
$(1)\to(1,1)\to(2,1)\to(2,1,1)\to(2,1,2)\to(2,1,2,1)\to(2,1,3,1)\to(2,2,3,1)$. 

Given two ideals $I,J$ in $\mathcal I$, the set $I\setminus J$ will be called a {\it skew ideal}; clearly, one may assume that 
$J\subseteq I$, what we assume in the sequel. If $S=I\setminus J$ is a skew ideal, then a point $H\in S$ such that $S\setminus H$ is still a skew ideal is called a {\it corner} of $S$. We call it a {\it lower corner} if $J\cup H$ is an ideal, and an {\it upper corner} if $I\setminus H$ is an ideal. For example, in Figure \ref{skew}, with $S$ the set of labelled points, the lower corners are $(1,3),(2,2), (3,1)$ and the upper corners are $(1,4),(3,3),(4,1)$.

A {\it skew partition } is an $A$-labelling of a skew ideal; the latter is called its {\it shape}. Equivalently, a skew partition is an upwards path in the 
Hasse diagram of $\mathcal I$; equivalently, an upwards path in the Hasse diagram of compositions with the order $\preceq$. See Figure \ref{skew}, where the 
sequence of compositions is $(2,1)\to(2,1,1)\to(3,1,1)\to(3,1,2)\to (3,2,2)\to(3,3,2)\to(4,3,2)\to (4,3,2,1)\to(4,3,3,1)$.

We call {\it pointed skew ideal} a pair $(S,H)$ of a skew ideal $S$, together with some point $H\in S$. 

Finally, we call {\it skew partition with a hole} an $A$-labelling of 
subset $S\setminus H$, where $(S,H)$ is a pointed skew ideal. We call $S$ the {\it shape} and $H$ the {\it hole}; note that the hole has no label. We call the hole {\it upper} (resp. {\it lower}) if $H$ is an upper (resp. lower) corner of $S$; otherwise, the hole is {\it inner}.
For example, in 
Figure \ref{hole}, the hole is the point of coordinates $H=(1,3)$, indicated by a $\circ$, and is inner.

\begin{figure}
\begin{minipage}{0.33\linewidth}\centering
$\begin{array}{ccccc}
6\\
|\\
4&-&5&-&7\\
| \\
2&-&8\\
| \\
1&-&3
\end{array}$
\caption{}\label{partition}
\end{minipage}
\hfill
\begin{minipage}{0.33\linewidth}\centering
$\begin{array}{ccccccc}
7\\
|\\
1&-&3&-&8\\
| \\
*&-&4&-&5\\
| \\
*&-&*&-&2&-&6
\end{array}$
\caption{}\label{skew}
\end{minipage}
\end{figure}

\begin{figure}
\begin{minipage}{0.33\linewidth}\centering
$\begin{array}{ccccccc}
7\\
|\\
\circ&-&3&-&8\\
| \\
1&-&4&-&5\\
| \\
*&-&*&-&2&-&6
\end{array}$
\caption{}\label{hole}
\end{minipage}
\hfill
\begin{minipage}{0.33\linewidth}\centering
$\begin{array}{ccccccc}
7\\
|\\
3&-&\circ&-&8\\
| \\
1&-&4&-&5\\
| \\
*&-&*&-&2&-&6
\end{array}$
\caption{}\label{down}
\end{minipage}
\end{figure}

%

\subsection{Jeu de taquin on skew partitions}

Given a skew partition with a hole $S$, we define two types of moves, which change it into another skew partition, with or without hole.

The {\it downward move} is defined as follows. If $H$
is an upper hole, one removes it and one obtains a skew partition (without hole).
If $H$ is not an upper hole, then there may be one or two points in $S$ covering $H$. In the first case, the point $K$ covering $H$ becomes the new hole, and $H$ gets the label previously on $K$. In the second case, let $K,L$ be the two points, with respective labels $x,y$ and suppose that $x<y$ in $A$; then $K$ becomes the new hole, and $x$ becomes the new label of $H$. One obtains a new skew partition with a hole. For example, the downward move applied in Figure \ref{hole} gives Figure \ref{down}. Observe that the hole of the new skew diagram is further from the minimum $(1,1)$ in the Hasse diagram of $\p^2$.

The {\it upward move} is defined similarly by looking to the point covered by $H$. Note that $H$ can cover at most one point.

A {\it downward slide} on a skew partition $R$ is defined as follows; let $I\setminus J$ be its shape. If $J$ is empty, then $R$ is a partition and the slide is completed, producing $R$.
If $J$ is nonempty, choose a point $H$ that is a maximal element in $J$. Then $(H\cup (I\setminus J),H)$ is a pointed skew diagram, with lower corner $H$, and 
$R$ together with $H$ is a skew partition with the hole $H$. We then apply iteratively downward moves, until one obtains a skew partition without hole (and we call $H'$ the hole that was removed in the last step).
The fact that this ends in finitely many steps follows from the observation above about distance
from $(1,1)$.
Observe that the new skew diagram is of the form $I'\setminus J'$, where $J'=J\setminus H$ and $I'=I\setminus H'$. Each downward slide is determined on the initial skew partition $R$ by a {\it trail}, which is the set of labels obtained starting form $H$ and choosing iteratively the smallest label among the covering points; see Figure \ref{taquin1}, where the trail is indicated by bold numbers; the slide is then obtained by sliding downward (in the poset) the labels in the trail, see Figure \ref{taquin2}.

Finally, {\it downward jeu de taquin} on a skew partition is applying to it iteratively a sequence of downward slides until a partition is obtained. Note that there are several ways to do it, since one has to choose a point $H$ for each slide, and there may be several choices. The final partition is however unique, as stated below.

An example is given in Figures \ref{taquin1} - \ref{taquin4}. Each slide is indicated by its trail in bold.

\begin{figure}
\begin{minipage}[b]{0.33\linewidth}\centering
$\begin{array}{ccccccc}
7\\
|\\
\bf 1&-&\bf 3&-&\bf 8\\
| \\
\circ&-&4&-&5\\
| \\
*&-&*&-&2&-&6
\end{array}$
\caption{}\label{taquin1}
\end{minipage}
\hfill
\begin{minipage}[b]{0.33\linewidth}\centering
$\begin{array}{ccccccc}
7\\
|\\
 3&-& 8\\
| \\
1&-&4&-&5\\
| \\
*&-&\circ&-&\bf 2&-&\bf 6
\end{array}$
\caption{}\label{taquin2}
\end{minipage}
\end{figure}

\begin{figure}
\begin{minipage}[b]{0.33\linewidth}\centering
$\begin{array}{ccccccc}
\bf 7\\
|\\
\bf 3&-& 8\\
| \\
\bf 1&-&4&-&5\\
| \\
\circ&-&2&-&6
\end{array}$
\caption{}\label{taquin3}
\end{minipage}
\hfill
\begin{minipage}[b]{0.33\linewidth}\centering
$\begin{array}{ccccccc}

 7&-& 8\\
| \\
3&-&4&-&5\\
| \\
1&-&2&-&6
\end{array}$
\caption{}\label{taquin4}
\end{minipage}
\end{figure}

Define for each word $w$ the increasing rearrangement $\overline w$ of $w$; for example, $\overline{bacbdbc}=abbbccd$.
For each skew partition $R$, with or without hole, we define its {\it row-word} $RW(R)$ as follows: suppose that the 
shape of $R$ is the skew ideal $I\setminus J$, where the largest $y$-coordinate of a point in $I$ is $k$; denote by $u_i$ 
the word obtained by reading from left to right the labels in $R$ located in the line of $y$-coordinate $i$. Then 
$$
RW(R)=u_k\overline{u_{k-1}u_k}\cdots\overline{u_1\cdots u_{k-1}u_k}.
$$
Note that $u_k$ is already increasing, since the labelling is. For example, for the skew partition in Figure \ref{skew}, its row-word is $7 \,1378 \,134578\, 12345678$, while the row-word of 
the skew partition with hole of Figure \ref{hole} is $7\, 378\, 134578 \,12345678$. Observe that this definition is such that for a 
partition $R$, corresponding to the $N$-tableau $T$, one has $RW(R)=RW(T)$, as follows from Lemma \ref{bij1}.

\begin{theorem}\label{normal} The partition obtained by downward jeu de taquin from a skew partition is independent of 
the choices of the lower corners during the algorithm.
\end{theorem}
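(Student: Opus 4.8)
The plan is to show that downward jeu de taquin preserves the row word modulo $\equiv_{styl}$; the theorem then follows from the uniqueness of the $N$-tableau representing a given stylic class. Concretely, I would prove that each downward move transforms a skew partition with a hole $R$ into one whose row word is stylic-equivalent to $r(R)$. Granting this, every downward slide (a composition of downward moves) preserves the class of $r(R)$ modulo $\equiv_{styl}$, hence so does any completed downward jeu de taquin. If the final partition is $R^\ast$, corresponding via Lemma \ref{bij1} to the $N$-tableau $T^\ast$, then $r(R^\ast)=r(T^\ast)$, and by Corollary \ref{repres} together with Theorem \ref{bijection} the word $r(T^\ast)$ is the unique word of the form $r(T)$, $T$ an $N$-tableau, lying in the stylic class of $r(R)$. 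Thus $R^\ast$ depends only on the class of $r(R)$ modulo $\equiv_{styl}$, and in particular not on the choices of lower corners.

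First I would dispose of the easy moves. Removing an upper hole leaves all labels in place, so $r(R)$ is unchanged. A horizontal move relocates a label within its own row, hence leaves the multiset of labels in each row fixed; since every cumulative factor $\overline{u_i\cdots u_k}$ depends only on these multisets, $r(R)$ is again unchanged, even before passing to $\equiv_{styl}$.

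The content of the argument is the vertical move, say moving a label $x$ from the first-column cell of row $b+1$ down to row $b$, the hole moving up. Here the multiset of labels of rows $b$ and $b+1$ taken together is preserved, so every factor $\overline{u_i\cdots u_k}$ with $i\le b$ or $i\ge b+2$ is unchanged; only the factor $P:=\overline{u_{b+1}\cdots u_k}$ changes, losing exactly one copy of $x$, producing $P'$. Writing $Q:=\overline{u_b\cdots u_k}$, the factor immediately following $P$ in the row word, it suffices by the congruence property to prove $PQ\equiv_{styl}P'Q$. The key structural input is that $x$ is the minimum of $\Supp P$ and of $\Supp Q$: indeed the cell carrying $x$ is the first-column cell of row $b+1$, the first column is a chain for $\preceq$, and the complementary set is a lower ideal, so every nonempty row $c\ge b+1$ contains its first-column cell, whose label is the minimum of that row and which increases with $c$; moreover the vertical move is chosen precisely when every label of row $b$ exceeds $x$. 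With $x=\min\Supp P=\min\Supp Q$ (and $x$ occurring once in $P$), one reduces, using that increasing words collapse to $s(\Supp(\cdot))$ (Lemma \ref{idemp2}), to the equivalence $xMxM'\equiv_{styl}MxM'$ with $M,M'$ increasing words on letters $>x$; this I would verify directly on the action on columns, using that $x\cdot\gamma$ always contains $x$ (Lemma \ref{idemp} (o)), that inserting letters $>x$ preserves membership of $x$ (Lemma \ref{idemp} (ii)), and that inserting $x$ into a column already containing $x$ does nothing (Lemma \ref{idemp} (i)).

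The main obstacle is precisely this vertical-move step, and within it the verification that $x$ is the minimum of the labels in all rows weakly above the hole. This is where the nonstandard order $\preceq$ must be used carefully: a priori a higher row could begin with a small label in a non-first-column cell, which would break the equivalence (and indeed the naive identity $PQ\equiv_{styl}P'Q$ fails without the minimality of $x$); the lower-ideal structure, together with the fact that the first column is a chain, is exactly what rules this out. Once this case is settled, assembling the slides and invoking Corollary \ref{repres} and Theorem \ref{bijection} is routine.
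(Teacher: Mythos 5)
Your proposal is correct and follows essentially the same route as the paper: you show the stylic class of the row-word is invariant under each downward move (the horizontal move and upper-hole removal being trivial), reduce the vertical move to the relation $xMxM'\equiv_{styl}MxM'$ for increasing $M,M'$ on letters $>x$ --- which is exactly the paper's Lemma \ref{aua}, proved there by the same column-action argument via Lemma \ref{idemp} (o), (i), (ii) that you sketch --- and then conclude uniqueness from Theorem \ref{bijection} and Lemma \ref{bij1}. Your careful verification that $x$ is the minimum of the labels in all rows weakly above the hole (via the lower-ideal structure and the first column being a chain) is a correct elaboration of a point the paper merely asserts.
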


\begin{lemma}\label{aua} Let $a\in A, u\in A^*$ be such that each letter in $u$ is greater or equal to $a$. Then $aua\equiv _{styl}ua$.
\end{lemma}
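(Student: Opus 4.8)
The plan is to use the defining property of the stylic congruence: $aua\equiv_{styl}ua$ holds if and only if the two words act identically on every column, i.e. $aua\cdot\gamma=ua\cdot\gamma$ for every $\gamma\in\mathcal C(A)$. Since the action is a left action, evaluated so that the rightmost letter is inserted first, writing $\gamma'=a\cdot\gamma$ we have $ua\cdot\gamma=u\cdot\gamma'$ and $aua\cdot\gamma=a\cdot(u\cdot\gamma')$. Thus it suffices to prove $a\cdot(u\cdot\gamma')=u\cdot\gamma'$ for every $\gamma$.

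By Lemma \ref{idemp}(i) this reduces to showing that the column $u\cdot\gamma'$ contains $a$. Now $\gamma'=a\cdot\gamma$ contains $a$ by Lemma \ref{idemp}(o), so it is enough to establish the following invariant: if a column $\delta$ contains $a$ and $x\geq a$ is a letter, then the column $x\cdot\delta$ still contains $a$. Applying this invariant once for each letter of $u$ (all of which are $\geq a$ by hypothesis), starting from $\gamma'$, a straightforward induction on the length of $u$ then shows that $u\cdot\gamma'$ contains $a$, which completes the argument.

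For the invariant itself I distinguish two cases. If $x=a$, then $a\cdot\delta$ contains $a$ directly by Lemma \ref{idemp}(o). If $x>a$, then $a\in\delta_x=\{y\in\delta\mid y<x\}$, and by Lemma \ref{idemp}(ii) one has $\delta_x=(x\cdot\delta)_x\subseteq x\cdot\delta$, so again $a\in x\cdot\delta$. This is the only computational step, and it is immediate once Lemma \ref{idemp} is invoked; the remaining care is purely bookkeeping of the direction of the action (rightmost letter inserted first), which I expect to be the only genuine source of confusion.

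An alternative, purely combinatorial route would rewrite $aua$ into $ua$ using the plactic relations together with $a^2\equiv_{styl}a$, in the spirit of the explicit derivation $\underline{c}abd\equiv_{styl}\cdots\equiv_{styl}cdab$ given earlier; this is feasible but messier and requires controlling how the trailing $a$ migrates through the letters of $u$, so I would prefer the action-based proof above.
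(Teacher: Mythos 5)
Your proof is correct and takes essentially the same route as the paper's: reduce via the left action to showing that $a$ appears in the column $(ua)\cdot\gamma$, then propagate this through the letters of $u$ by recursive use of Lemma \ref{idemp}, and conclude with Lemma \ref{idemp}(i). The only cosmetic difference is that your induction step invokes parts (o) and (ii) where the paper cites (i) and (ii); both case splits are valid.
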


\begin{proof}
It is enough to show that for each column $\gamma$, $(aua)\cdot\gamma=(ua)\cdot \gamma$. This is equivalent to the 
fact that $a$ fixes $(ua)\cdot\gamma$, which will follow, by Lemma \ref{idemp} (i), from the fact that $a$ appears in the column $(ua)
\cdot\gamma$. Now, $a$ appears in $a\cdot\gamma$; and, since the letters in $u$ are all greater or equal to $a$, using 
recursively Lemma \ref{idemp} (i) and (ii), we obtain that $a$ appears in $(ua)\cdot\gamma$.
\end{proof}

\begin{proof}[Proof of Theorem \ref{normal}] Let $R$ be a skew partition and $R_0$ a partition obtained by downward jeu 
de taquin applied to $R$, for some choices of the lower corners. We claim that 
\begin{equation}\label{RequivR0}
RW(R)\equiv_{styl} RW(R_0).
\end{equation}
The claim being admitted, suppose 
that we obtain another partition $R_1$ by downward jeu de taquin; by the claim, we have $RW(R)\equiv_{styl} RW(R_1)$. Let $T_i$ be the $N$-tableau corresponding to the 
partition $R_i$ through the natural bijection of Lemma \ref{bij1}. Then by the observation before the theorem, we have 
$RW(T_0)\equiv_{styl}RW(R_0)\equiv_{styl}RW(R)\equiv_{styl}RW(R_1)\equiv_{styl}RW(T_1)$. Thus $T_0=T_1$ by Corollary
\ref{repres}, and finally $R_0=R_1$ by Lemma \ref{bij1}.

We prove now the claim. It is enough to prove that the stylic class of the row-word is invariant under downward moves of skew 
partitions with holes. Thus let $R'\to R''$ be such a move. The two cases two consider are: (i) shifting the hole to the right; 
(ii) shifting the hole above.

In case (i), the row word does not change. In case (ii), let $i$ and $i+1$ the indices of the rows where the move occurs; note that the hole in $R'$ is then in the first column ($x$-coordinate 1) and in row $i$. 
Denote by $u_j$ the row-word of row $j$ of $R'$. Then $u_{i+1}=av$, with $a$ smaller than each letter in 
$v$, $u_{i}$, $u_{i+2}, u_{i+3}$, \ldots. The row-word of the $i$-th and $i+1$-th rows of $R''$ are $au_i$ and $v$ respectively. For $j\neq i,i+1$, the rows of $R'$ and $R''$ are identical.
Let $k$ be the number of rows in $R'$ and $R''$ (row $k$ of $R''$ may be empty, when $i+1=k$, but this does not change the argument that follows).

For some words $x,y$, $$RW(R')=x(\overline{u_{i+1}u_{i+2}\cdots u_k})(\overline {u_iu_{i+1}\cdots 
u_k})y,$$ and 
$$RW(R'')=x(\overline{vu_{i+2}\cdots u_k})(\overline{au_ivu_{i+2}\cdots u_k})y. $$

Thus it is enough to show that $$(\overline{u_{i+1}u_{i+2}\cdots u_k})(\overline {u_iu_{i+1}\cdots 
u_k})\equiv_{styl}(\overline{vu_{i+2}\cdots u_k})(\overline{au_ivu_{i+2}\cdots u_k}).$$
But the left word is $$(\overline{avu_{i+2}\cdots u_k})(\overline {u_iavu_{i+2}\cdots u_k})=a(\overline{vu_{i+2}\cdots u_k})a(\overline {u_ivu_{i+2}\cdots u_k})$$
and the right word is $(\overline{vu_{i+2}\cdots u_k})a(\overline{u_ivu_{i+2}\cdots u_k})$. Thus the congruence follows from Lemma 
\ref{aua}.
\end{proof}


\subsection{Properties of the mappings $\Delta$ and $\pi$.}\label{deltaPi}

The operator $\Delta$ of Section \ref{invol} may be computed as follows: let $R$ be a partition of a subset of $A$, viewed as in 
Section \ref{skew-hole} as an $A$-labelling of an ideal in $\p^2$. Note that $a=\min(R)$ is in position $(1,1)$; remove it from the 
labels, obtaining a skew partition $R\setminus a$. Then $\Delta(R)$ is the partition obtained by downward jeu de taquin on 
$R\setminus a$. See Figures \ref{Delta1} and \ref{Delta2} for an example, which is the same as the one illustrating the 
definition of $\Delta$ in Section \ref{invol}.

\begin{figure}\centering
\begin{minipage}[b]{0.33\linewidth}\centering
$\begin{array}{ccccc}
6\\
|\\
\bf 4&-&\bf 5&-&\bf7\\
| \\
\bf2&-&8\\
| \\
\circ&-&3
\end{array}$
\caption{}\label{Delta1}
\end{minipage}
\hfill
\begin{minipage}[b]{0.33\linewidth}\centering
$\begin{array}{ccccc}
6\\
|\\
5&-&7\\
| \\
4&-&8\\
| \\
2&-&3
\end{array}$
\caption{}\label{Delta2}
\end{minipage}
\end{figure}

Consider a nonempty word $w\in A^*$, and let $x$ denote a letter appearing in $w$. Denote by $w\setminus x$ the word obtained by removing all $x$'s from $w$. 

Recall that if two words are equal modulo $\equiv_{styl}$, then they have the same underlying alphabet, and in particular the same smallest letter. The next result shows the compatibility of the operations of removing the smallest letter, and the link with $\Delta$. Recall that for any word $w$, $\pi(w)$ is the partition bijectively associated to the $N$-tableau $N(w)$.

\begin{lemma}\label{removing} (i) If $u\equiv_{styl}v$, with smallest letter $a$, then $u\setminus a\equiv_{styl}v\setminus a$; in particular, $\pi(u\setminus a)=\pi(v\setminus a)$. The same holds when removing the largest letter.

(ii) If $a$ is the smallest letter in $w$, then $\pi(w\setminus a)=\Delta(\pi(w))$.

(iii) If $z$ is the largest letter of $w$, then $\pi(w\setminus z)=\pi(w)\setminus z$.
\end{lemma}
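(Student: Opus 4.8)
Looking at this lemma, I need to prove three statements about how the mapping $\pi$ (and the stylic congruence) interacts with removing the smallest or largest letter of a word. Let me think through the approach.

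The plan is to prove the three parts in a logical order, starting with part (i) which is foundational. For part (i), I would establish that $u \equiv_{styl} v$ implies $u \setminus a \equiv_{styl} v \setminus a$ when $a$ is the smallest letter. The key tool should be the presentation from Theorem~\ref{main}: the stylic congruence is generated by the plactic relations together with $x^2 = x$. I would check that removing the smallest letter $a$ is compatible with each of these defining relations. For the idempotent relation $x^2 = x$, removing $a$ is harmless (if $x = a$ both sides collapse identically; if $x \neq a$ the relation survives). For the four plactic relations $bac \equiv bca$, $acb \equiv cab$, $baa \equiv aba$, $bba \equiv bab$, removing the smallest letter $a$ of the whole word is more delicate because $a$ may or may not appear as one of the letters in the relation. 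The cleanest route is to use the homomorphism that ``erases'' the smallest letter and verify it respects all the relations, possibly invoking Eq.(\ref{theta-A-a}) style reasoning about the alphabet $A \setminus a$. The statement for the largest letter follows symmetrically via the anti-automorphism $\theta$, since $\theta$ exchanges smallest and largest letters and induces an anti-automorphism of $\Styl(A)$.

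For part (ii), I would connect $\pi(w \setminus a)$ with $\Delta(\pi(w))$ using the characterization of $\Delta$ as downward jeu de taquin given just before this lemma: $\Delta(R)$ is obtained by removing $a = \min(R)$ from position $(1,1)$ and applying downward jeu de taquin to the resulting skew partition $R \setminus a$. I expect to show that the row-word of this skew partition is stylically equivalent to $w \setminus a$. Here the crucial identity is Eq.(\ref{wequivrNw}), namely $w \equiv_{styl} r(N(w))$, combined with part (i) applied to $u = w$ and $v = r(N(w))$. This gives $w \setminus a \equiv_{styl} r(N(w)) \setminus a$. The final step invokes Theorem~\ref{normal} (invariance of jeu de taquin) together with the row-word computation to identify $\pi(w \setminus a)$ with $\Delta(\pi(w))$ through the stylic class. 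For part (iii), the analogous statement for the largest letter $z$ should follow directly from the definition of $\pi$ via $N$-tableaux: removing $z$ from $w$ corresponds to removing $z$ from every row of the $N$-tableau, which is exactly $\pi(w) \setminus z$; again part (i) for the largest letter guarantees this is well-defined on stylic classes.

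The main obstacle I anticipate is part (ii): verifying that removing the minimum letter from $r(N(w))$ and then forming its $N$-tableau genuinely realizes the downward jeu de taquin description of $\Delta$. The subtlety is that $\min(R)$ sits precisely in position $(1,1)$ of the ideal (as noted, since minima of blocks strictly increase), so its removal creates a hole exactly at the bottom-left corner, and I must confirm that the trail structure of the jeu de taquin slide matches the way $N$-insertion reconstructs the tableau from $w \setminus a$. I would handle this by comparing the row-word $r(R \setminus a)$ of the skew partition with $a$ removed against $r(N(w)) \setminus a$ and showing they lie in the same stylic class, then applying Theorem~\ref{normal} to conclude that the jeu de taquin result equals $N(w \setminus a)$; translating back through Lemma~\ref{bij1} yields $\Delta(\pi(w)) = \pi(w \setminus a)$. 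The bookkeeping of which row-words agree modulo $\equiv_{styl}$ is where the real work lies, but the tools of the preceding subsection make each step routine once the correspondence is set up.
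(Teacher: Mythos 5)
Your plans for (i) and (ii) are essentially the paper's own proofs. For (i) the paper reduces, exactly as you propose, to elementary steps of the presentation of Theorem \ref{main} and checks that each step involving $a$ becomes an identity after erasing the $a$'s, with the largest-letter case by symmetry; for (ii) it runs precisely your chain: $w\equiv_{styl}r(\pi(w))$ by Eq.~(\ref{wequivrNw}), then part (i), then the definitional identity $r(\pi(w))\setminus a=r(\pi(w)\setminus a)$ (this is immediate from the definition of the row-word of a skew partition, since $a$ occurs only in the last factor --- less work than you anticipate), then the invariance statement (\ref{RequivR0}) together with the description of $\Delta$ as a single downward slide, and finally Lemma \ref{NT}. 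One point you must make explicit in (i): the letter-erasing map does \emph{not} respect arbitrary stylic relation instances, so ``verify the homomorphism respects all the relations'' is false as a global claim about a map $\Styl(A)\to\Styl(A\setminus a)$; for instance erasing $b$ from the Knuth relation $bac\equiv bca$ ($a<b<c$) leaves $ac$ and $ca$, and $ac\cdot 1=\{a\}\neq\{c,a\}=ca\cdot 1$. What saves the argument is that the presentation is support-preserving, so along any chain of elementary steps joining $u$ to $v$ the letter $a$ remains the minimum of every intermediate word; hence whenever $a$ occurs in a relation instance it is the smallest of its letters, and only then does erasure turn the step into an identity. The paper's proof uses exactly this observation.

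The genuine gap is in (iii). You assert that removing the largest letter $z$ ``corresponds to removing $z$ from every row of the $N$-tableau'' and that this ``follows directly from the definition of $\pi$.'' It does not: the $N$-insertion of $w\setminus z$ runs a different bumping process from that of $w$, and the equality $N(w\setminus z)=N(w)\setminus z$ is precisely the content to be proved. The paper establishes it by first proving the claim $\delta(w\setminus z)=\delta(w)\setminus z$, by induction on $|w|$ under the weaker hypothesis that $z$ is $\geq$ every letter of $w$, the key step being the verification that $x^{\uparrow}_{\Supp(w\setminus z)}=x^{\uparrow}_{\Supp(w)}\setminus z$ for $x<z$; statement (iii) then follows row by row via Lemma \ref{N-variant}. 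A direct tableau-level argument is also possible (inserting $z$ into a row never bumps anything since $z$ is maximal, and a row can bump $z$ only when $z$ is its unique element exceeding the inserted letter, in which case the $z$-free insertion simply terminates at that row), but some such induction has to be carried out; it is not a definitional triviality. Finally, your appeal to part (i) to make (iii) ``well-defined on stylic classes'' is idle: (iii) is a statement about a single word $w$, not about its class.
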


\begin{proof} (i) The stylic congruence is generated by the plactic relations and the idempotence relations. Therefore, it 
suffices to prove the statement when $u,v$ differ by an elementary step of this congruence, and we may assume that this 
step involves an $a$. If it is a plactic step, then since $a$ is the smallest letter, the step amounts to replace $aba$ (resp. 
$bab$, resp. $acb$, resp. $bac$) by $baa$ (resp. $bba$, resp. $cab$, resp. $bca$) in one of the words $u,v$, obtaining the other (we have $a<b<c$); this step becomes the identity when the $a$'s are 
removed. If the step is replacing $aa$ by $a$, or conversely, then it becomes the identity too, when the $a$'s are removed.

The second assertion follows from the bijection $\pi$ between the stylic monoid and the set of partitions of subsets of $A$. The last one by symmetry.

(ii) We have by (\ref{wequivrNw}) and the definition of the mapping $RW$ on partitions, $w\equiv_{styl} RW(N(w))
=
RW(\pi(w))$. By (i) we have $w\setminus a\equiv_{styl} RW(\pi(w))\setminus a$. Now, by the definition of $RW$, we have $RW(\pi(w))
\setminus a=RW(\pi(w)\setminus a)$; here $\pi(w)\setminus a$ denotes the skew partition, obtained by removing $a$ from the partition $\pi(w)$. We now apply downward jeu 
de taquin to $\pi(w)\setminus a$, obtaining the partition $R_0$; the latter is by what we have seen above equal to $\Delta(\pi(w))$. By 
(\ref{RequivR0}), $RW(\pi(w)\setminus a)\equiv_{styl} RW(R_0)$. Thus finally, $w\setminus a\equiv_{styl}RW(\Delta(\pi(w)))$, 
and therefore $\pi(w\setminus a)=\pi(RW(\Delta(\pi(w))))=\Delta(\pi(w))$, the last equality by Lemma \ref{NT}.

(iii) We claim that $\delta(w\setminus z)=\delta(w)\setminus z$. The claim being admitted, (iii) follows by induction from 
Lemma \ref{N-variant}.

We prove the claim by induction on $|w|$. 
If $w$ is empty it is clear. So we may assume that ($\ast$) $\delta(w\setminus z)=\delta(w)\setminus z$ and we prove it for 
$wx$, $x$ 
being some letter. We have $\delta((wx)\setminus z)=\delta((w\setminus z)(x\setminus z))=\delta(w \setminus z)t$, where 
$t=1$ if $x=z$, and $t=x^\uparrow_{\Supp(w \setminus z)}$ if $x<z$. On the other hand, $
\delta(wx)=\delta(w)x^\uparrow_{\Supp(w)}$, hence $\delta(wx)\setminus z=(\delta(w)\setminus z)(x^\uparrow_{\Supp(w)}
\setminus z)$. By ($\ast$), it is therefore enough to show that $t=x^\uparrow_{\Supp(w)}\setminus z$. If $x=z$, both sides are 
equal to $1$, since $z$ is the maximum letter. Suppose now that $x<z$. We have to show that ($\ast\ast$) $x^\uparrow_{\Supp(w 
\setminus z)}=x^\uparrow_{\Supp(w)}\setminus z$.
If there exists an element $y$ in $\Supp(w)$ such that $x<y<z$, then, taking $y$ minimum, both sides of ($\ast\ast$)  are equal to 
$y$; if no such $y$ exists, then both sides are equal to $1$, because $x^\uparrow_{\Supp(w)}=z$ or $1$.
\end{proof}

\subsection{Growth diagram}

Recall that a partition on $A$ is equivalent to a path in the Hasse diagram of the poset of compositions, see Section \ref{skew-hole}. Given a partition $R$ on 
$A$, consider the sequence of partitions $R$, $\Delta(R)$, $\Delta^2(R)$,\ldots,$\Delta^{n}(R)$, with $n=|A|$; note that these partitions are on different sets. 
Draw from left to right the $n$ paths of compositions associated with these partitions on a pyramid, each path being represented diagonally upwards, direction 
north-east; see Figure \ref{growth-diagram}, looking only at the north-east arrows $\nearrow$, and disregarding the north-west arrows $\nwarrow$. For example, 
the path $1\to 11\to12\to121\to221\to222$ is associated with the partition $R=15/23/46$, and the path $1\to 11 \to 111 \to 211 \to 212$ is associated with the partition 
$25/3/46=\Delta(R)$. 

We complete this diagram by adding north-west arrows $\nwarrow$, see the figure; at this point it is not clear that these arrows are also covering relations, but it will be proved soon. We call this the {\it evacuation pyramid of $R$}. It follows from the definition of the evacuation that the right side of the pyramid (which goes north-west) represents the path of compositions associated to $\evac(R)$.

Note that the pyramid is formed of rhombuses, that we describe in a moment (the situation, following the work of Sergey Fomin, is quite similar to the one of standard Young tableaux and partitions of integers, see \cite[Proposition A1.2.7]{S}).

\begin{figure}[h!]
\centering
\includegraphics[scale=0.8]{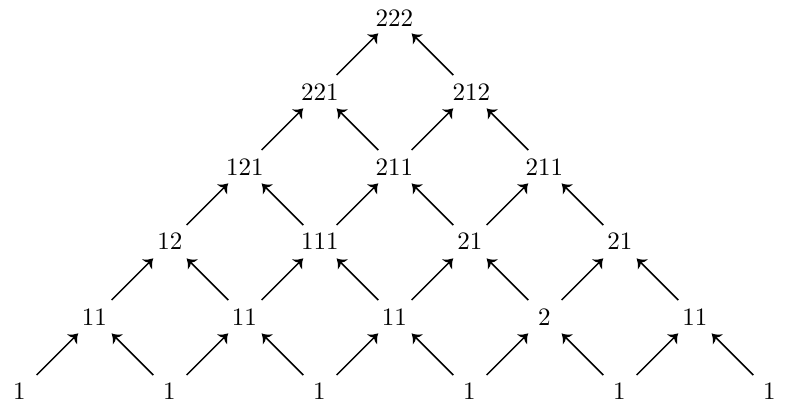} 
\caption{Growth diagram: evacuation of partition $15/23/46$}\label{growth-diagram}
\end{figure}


Before that, we describe the intervals of length 2 in the poset of compositions. By inspecting the definition of the covering relation in this poset, one sees that such an interval is always of cardinality 3 or 4; that is, if $C_1\to C_2\to C_3$, then either $C_2$ is unique and we let $C'_2=C_2$, or there is another composition $C'_2$ such that $C_1\to C'_2\to C_3$. We take this notation below.

\begin{proposition} Each arrow in the evacuation pyramid of $R$ is a covering relation of the poset of compositions. The pyramid may be recursively constructed, starting from the 
bottom row and the leftmost path by applying the following rule: if the two leftmost arrows $C_1\to C_2\to C_3$ of a rhombus are known, then the missing composition of the 
rhombus is $C'_2$.\end{proposition}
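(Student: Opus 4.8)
The plan is to exhibit the evacuation pyramid as a Fomin‑style growth diagram built from two elementary operations — adding the next label along a partition chain, and applying $\Delta$ — and to read off both assertions from the fact that these two operations commute. For a partition $R$ of $B\subset A$ and $s\le |B|$, write $\rho_s(R)$ for the restriction of $R$ to its $s$ smallest labels, and $\mathrm{sh}$ for the shape (the composition). The chain of $R$ is $\mathrm{sh}(\rho_1 R)\to\mathrm{sh}(\rho_2 R)\to\cdots$, each arrow being a covering relation by the discussion of Section \ref{skew-hole}. Since the NE‑diagonals of the pyramid are exactly the chains of the partitions $\Delta^t(R)$, the $\nearrow$ arrows are covering relations for free, and the whole issue concerns the $\nwarrow$ arrows joining consecutive diagonals.

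First I would record two facts about the building blocks. (I) A single application of $\Delta$ lowers the shape by one covering relation: by the jeu‑de‑taquin description of $\Delta$ given in the subsection on $\Delta$ and $\pi$, removing $\min(R)$ at $(1,1)$ and sliding the resulting hole by downward moves ejects it at a unique upper corner, so $\mathrm{sh}(\Delta R)\to\mathrm{sh}(R)$ is a cover. (II) The commutation of $\Delta$ with removing the largest letter: for $W=\pi(w)$ with distinct smallest and largest letters $a,z$,
$$\Delta(W)\setminus z=\Delta(W\setminus z).$$
This is immediate from Lemma \ref{removing}: its parts (ii) and (iii) give $\Delta(W)\setminus z=\pi((w\setminus a)\setminus z)$ and $\Delta(W\setminus z)=\pi((w\setminus z)\setminus a)$, and the two words coincide. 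Equivalently, $\Delta$ commutes with adjoining a new largest label, and iterating yields $\mathrm{sh}(\rho_s(\Delta^t R))=\mathrm{sh}(\Delta^t(\rho_{s+t} R))$.

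Now fix a rhombus; it joins two adjacent diagonals, the left one of index $j-1$ and the right one of index $j$. It is governed by the single partition $V=\Delta^{j-1}(\rho_{i+j} R)$ and its one‑box extension $W=\Delta^{j-1}(\rho_{i+j+1} R)$; since $\rho_{i+j+1}R$ is $\rho_{i+j}R$ with a largest label added, (II) iterated gives that $\mathrm{sh}(W)$ is $\mathrm{sh}(V)$ with one box added. The four corners are $C_2=\mathrm{sh}(V)$ and $C_3=\mathrm{sh}(W)$ on the left diagonal, and $C_1=\mathrm{sh}(\Delta V)$ and $C_2'=\mathrm{sh}(\Delta W)$ on the right diagonal. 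By (I) the edges $C_1\to C_2$ and $C_2'\to C_3$ are covers; adding a box makes $C_2\to C_3$ a cover; and by (II), $\Delta W$ is $\Delta V$ with a largest label re‑added, so $C_1\to C_2'$ is a cover too. Hence all four arrows are covering relations and the corners have ranks $i,i+1,i+1,i+2$, forming an interval of length two in the composition poset. The classification of such intervals carried out just before the statement shows there are at most two intermediate compositions; thus $\{C_2,C_2'\}$ are exactly these intermediates, and given the two leftmost arrows $C_1\to C_2\to C_3$ the missing corner is precisely $C_2'$. This proves both assertions, the degenerate case $C_2=C_2'$ corresponding to a three‑element interval.

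The recursion then follows by induction on the diagonal index: the leftmost diagonal is the chain of $R$, and the bottom row supplies the minimal vertex of every diagonal; knowing a diagonal together with the bottom vertex of the next one, one climbs the latter one cover at a time, each new vertex being the $C_2'$ produced by the local rule from three already‑known corners. I expect the main obstacle to be the bookkeeping that matches the geometric rhombi of the pyramid with the algebraic configurations above — fixing the indexing of diagonals, ranks, and the left/right intermediates — together with the clean proof of (I) from the downward‑move definition; once these are in place, (II) and the interval classification close the argument, exactly as in Fomin's treatment of standard Young tableaux (\cite{S}, Proposition A1.2.7).
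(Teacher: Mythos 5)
Your overall architecture is sound and genuinely different from the paper's: you derive the growth-diagram structure globally, from the commutation fact (II) (correctly obtained from Lemma \ref{removing} (ii) and (iii), which are proved before the growth-diagram subsection, so there is no circularity) together with the cover fact (I) (correctly read off from the jeu-de-taquin description of $\Delta$, which the paper also relies on, the ejected box being an upper corner and removal of an upper corner being the inverse of a covering move). The paper instead makes two reductions --- to the two leftmost diagonals, then, by deleting the largest letter of $A$, to the single upper rhombus --- and finishes by an explicit case analysis there. Your identification of the four corners of a rhombus as $\mathrm{sh}(\Delta V)$, $\mathrm{sh}(V)$, $\mathrm{sh}(\Delta W)$, $\mathrm{sh}(W)$ with $W=V$ plus a new largest label, via the identity $\mathrm{sh}(\rho_s(\Delta^t R))=\mathrm{sh}(\Delta^t(\rho_{s+t}R))$, is correct and arguably cleaner bookkeeping than the paper's reductions.

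However, there is a genuine gap at the decisive step. Showing that all four arrows are covers only places the unknown corner $C_4=\mathrm{sh}(\Delta W)$ among the at most two intermediate elements of the length-two interval $[C_1,C_3]$; it does not determine it. For the local rule ``the missing composition is $C_2'$'' to be correct, you must prove that $C_4=C_2$ happens exactly when the interval is a three-element chain --- equivalently, that whenever $[C_1,C_3]$ has two distinct intermediates, $\mathrm{sh}(\Delta W)\neq\mathrm{sh}(V)$. Your sentence ``the degenerate case $C_2=C_2'$ corresponding to a three-element interval'' asserts this correspondence without proof, and nothing in the rank/interval argument rules out the bad configuration in which the interval has four elements yet $C_4=C_2$, in which case the recursive construction would fill in the wrong corner. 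Closing the gap requires looking at which box each operation removes from $C_3=\mathrm{sh}(W)$: $C_2$ removes the box of the largest label $z$ of $W$, while $C_4$ removes the box at the end of the $\Delta$-trail of $W$; both boxes are ends of rows, so they coincide if and only if $z$ is the last trail label, and when they differ they lie in different rows, whence $C_2\neq C_4$ (decreasing two different parts of a composition by one, with the admissible deletions of a final part of size $1$, always yields distinct compositions) and the interval has cardinality four; when they coincide one checks directly that the interval is a chain. This dichotomy --- the paper's cases $x\neq z$ and $x=z$, with $x$ the last label of the trail --- is precisely the content of the paper's proof, so the step you elided is not routine bookkeeping but the heart of the proposition.
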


\begin{proof} We claim that: for each rhombus in the pyramid: (i) all its sides are covering relations; and (ii) if its leftmost arrows are $C_1\to C_2\to C_3$, then the fourth 
composition is $C'_2$.

To prove the claim, by construction of the pyramid, it is enough to prove it for a rhombus located on the two leftmost north-east paths. Also, since the pyramid obtained by removing the largest element of $A$ is obtained by removing the rightmost north-east sequence, it is enough to prove the claim for the upper rhombus in the pyramid.

Denote by $C_1,C_2,C_3,C_4$ the compositions in this rhombus, as indicated in Figure \ref{rhombus}. 

\begin{figure}
$\begin{array}{ccccc}
&&C_3&&\\
&\nearrow&&\ddots&\\
C_2&&&&C_4\\
&\ddots&&\nearrow&\\
&&C_1&&
\end{array}$
\caption{The upper rhombus}\label{rhombus}
\end{figure}

Let $z$ be the largest letter in $A$ and $x$ be the letter in $A$ which is the last label in the trail determined by the 
downward slide in the computation of $\Delta(R)$ ($x=7$ in Figure \ref{Delta1}). Observe that: $C_3$ is the shape of $R$; 
$C_2$ is the shape of $R\setminus z$; $C_4$ is the shape of $\Delta(R)$, that is, of $R\setminus x$; $C_1$ is the shape 
of $\Delta(R)\setminus z$.

Suppose first that $x\neq z$. Then $x,z$ lie in different parts $P$ and $Q$ (respectively) of $C_3$ (identifying parts of a composition with a 
horizontal subset with given $y$-coordinate of an ideal, as in Figure \ref{compos}) which have respectively size $a,b$. It follows that: $C_2$ is $C_3$ with size of $Q$ replaced by $b-1$; $C_4$ is $C_3$ with size of $P$ replaced by $a-1$; $C_1$ is $C_4$ with size of $Q$ replaced by $b-1$, hence $C_1$ is also $C_2$ with size of $P$ replaced by $a-1$.

Note that if $a=1$ (resp. $b=1$) then $P$ (resp. $Q$) is the last part of the composition (i.e. the top part of the shape) thus if $x\neq z$ one cannot have $a=b=1$. Also, if $a=1$ (resp. $b=1$), $a-1=0$ (resp. $b-1=0$) means that we removed that last part.

Therefore, the interval $[C_1,C_3]$ is equal to $\{C_1,C_2,C_4,C_3\}$ of cardinality 4, which proves the claim in this case. 


Suppose now that $x=z$. Then $z$ lies in a part $P$ of $C_3$, that has size $b$, and it follows that: $C_2$ is $C_3$ with size of $P$ replaced by $b-1$, as is $C_4$; $C_1$ is $C_4$ with size of $P$ replaced by $b-2$. In the particular cases where $b=2$ or $b=1$, $P$ is the last part of the composition. In the first case, for $b-2=0$, one has to remove the last part. In the second case, $b=1$, then the size of the part below is also 1 and one has to remove $P$ to obtain $C_2$ and $C_4$ and remove both parts to obtain $C_1$. Thus the interval $[C_1,C_3]$ is equal to $\{C_1,C_2,C_3\}$ of cardinality 3, which proves the claim in this case.

\end{proof}

\begin{corollary}\label{Deltaev} Let $R$ be a partition on $A$ and $z$ be the largest letter in $A$. Then $\evac(R\setminus z)=\Delta(\evac(R))$.
\end{corollary}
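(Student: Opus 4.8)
The plan is to read off both sides of the identity from the evacuation pyramid of $R$ built in the preceding Proposition, using its two defining features: the north-east diagonals are the paths of $R,\Delta(R),\Delta^2(R),\dots$, while the right (north-west) edge is the path of $\evac(R)$. Throughout I identify a partition of a subset of $A$ with its path in the Hasse diagram of compositions (Section~\ref{skew-hole}); since a path records which box is added at each step, it determines the partition, so it suffices to show that the two sides have the \emph{same} path. Note also that, for $\evac=\evac_A$, both $\evac(R\setminus z)$ and $\Delta(\evac(R))$ lie in $\Part(A\setminus a)$, where $a=\min(A)$, so the comparison is legitimate.

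First I would treat the left-hand side by a \emph{peeling} argument. Because $z=\max(A)$ is the largest letter, along the path of each $\Delta^{i}(R)$ the box carrying $z$ is the one added at the very last step. Moreover $\Delta(R\setminus z)=\Delta(R)\setminus z$: choosing $w$ with $\pi(w)=R$, this follows from Lemma~\ref{removing}(ii),(iii) together with the fact that deletion of $a$ and of $z$ from a word commute. Iterating gives $\Delta^{i}(R\setminus z)=\Delta^{i}(R)\setminus z$, so the path of $\Delta^{i}(R\setminus z)$ is the path of $\Delta^{i}(R)$ with its top step removed. Hence the pyramid of $R\setminus z$ is precisely the pyramid of $R$ with its right edge deleted, and the right edge of the pyramid of $R\setminus z$ is the diagonal of the pyramid of $R$ lying immediately inside the old right edge --- call it the \emph{second diagonal}. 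Applying the stated description of the right edge to $R\setminus z$ and then the relabelling of Eq.~(\ref{evac-A-z}), this second diagonal is the path of $\evac_{A\setminus z}(R\setminus z)$, and after $j_z$ it is the path of $\evac_A(R\setminus z)$.

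Next I would identify the same second diagonal with $\Delta(\evac(R))$ through the symmetry of the local rule. The Proposition shows that each length-$2$ interval $[C_1,C_3]$ carries its two middle compositions $C_2,C_4$ as an unordered pair, so the rule producing the fourth vertex of a rhombus from the other three is symmetric under interchanging the north-east and north-west directions. Consequently the pyramid can be generated equally well from its right edge, and reflecting it across its vertical axis yields a valid evacuation pyramid whose leftmost north-east diagonal is $\evac(R)$. By the uniqueness in the Proposition this reflected pyramid is the pyramid of $\evac(R)$, whose north-east diagonals are the paths of $\Delta^{i}(\evac(R))$. Reflecting back, the second diagonal of the pyramid of $R$ is the path of $\Delta(\evac(R))$.

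Combining the two identifications, the second diagonal is at once the path of $\evac_A(R\setminus z)$ and the path of $\Delta(\evac(R))$; as both partitions lie in $\Part(A\setminus a)$, they coincide, which is the claim. I expect the real work to sit in the middle step: justifying the reflection symmetry carefully --- that generating the pyramid from the right edge reproduces the evacuation pyramid of $\evac(R)$ --- and in keeping the alphabet shifts $i_a$ and $j_z$ straight, since $R\setminus z$, $\evac_{A\setminus z}(R\setminus z)$ and $\Delta(\evac(R))$ nominally live over $A\setminus z$, $A\setminus z$ and $A\setminus a$. By comparison, the commutation $\Delta(R\setminus z)=\Delta(R)\setminus z$ and the principle that a path determines its partition are routine.
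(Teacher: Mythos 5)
Your proof is correct and takes essentially the same route as the paper's: the paper likewise argues that the left--right symmetry of the local rhombus rule makes the second north-west diagonal of the pyramid the path of $\Delta(\evac(R))$, while removing the right side of the pyramid of $R$ yields the evacuation pyramid of $R\setminus z$, whose right side is the path of $\evac(R\setminus z)$. Your extra details --- deriving the commutation $\Delta(R\setminus z)=\Delta(R)\setminus z$ from Lemma \ref{removing}, invoking uniqueness of the recursive construction for the reflection, and tracking the shifts $i_a$, $j_z$ --- only make explicit what the paper's terser proof leaves implicit.
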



\begin{proof} By the previous proposition, the construction of the evacuation pyramid of $R$ is left-right symmetric. Thus, since the first north-west path from the right (the right side of the pyramid) represents $\evac(R)$, the second one represents the partition $\Delta(\evac(R))$. 
But the evacuation pyramid of $R\setminus z$ is obtained by removing form the whole pyramid its right side. The equality follows.
\end{proof}

\subsection{Proof of the evacuation theorem}\label{proof-evac}

\begin{lemma}\label{R1R2} Let $R_1,R_2$ be two partitions on $A$, with largest element $z$. Suppose that $R_1,R_2$ have the 
same number of blocks, that $R_1\setminus z=R_2\setminus z$ and  that $\Delta(R_1)=\Delta(R_2)$. Then $R_1=R_2$.
\end{lemma}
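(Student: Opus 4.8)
The plan is to reduce the statement to the single claim that $R_1$ and $R_2$ have the same \emph{shape} (the underlying composition, in the sense of Section~\ref{skew-hole}), and then to reconstruct each $R_i$ from its shape together with $R_i\setminus z$. Throughout I view a partition on $A$ as an increasing $A$-labelling of an ideal of $(\p^2,\preceq)$, so that its number of blocks equals the number of parts of its shape, and the maximal letter $z=\max A$ always labels the rightmost box of its row. The case $|A|\le 1$ is trivial, so assume $|A|\ge 2$; then $\min(R_1)=\min(R_2)=\min(A)=:a$, hence $\Delta$ removes the same smallest letter on both sides.

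I would first record the easy finish, assuming the shape claim. Set $R'=R_1\setminus z=R_2\setminus z$. Since $z$ is the global maximum it is always reinserted at the end of a single row, so $R_i$ is determined by the pair $\bigl(R',\ \mathrm{shape}(R_i)\bigr)$: the block receiving $z$ is read off as the unique box by which $\mathrm{shape}(R_i)$ exceeds $\mathrm{shape}(R')$ (an incremented part gives an existing block, an appended final part gives the new singleton $\{z\}$). Thus $\mathrm{shape}(R_1)=\mathrm{shape}(R_2)$ together with $R_1\setminus z=R_2\setminus z$ yields $R_1=R_2$.

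It remains to prove $\mathrm{shape}(R_1)=\mathrm{shape}(R_2)$, and here I would use the upper rhombus of the evacuation pyramid (the Proposition preceding Corollary~\ref{Deltaev}, Figure~\ref{rhombus}). For any partition $R$ on $A$ the compositions
$$
C_3=\mathrm{shape}(R),\quad C_2=\mathrm{shape}(R\setminus z),\quad C_4=\mathrm{shape}(\Delta(R)),\quad C_1=\mathrm{shape}(\Delta(R)\setminus z)
$$
form that rhombus, whose proof describes the interval $[C_1,C_3]$ completely. Applying this to $R_1$ and $R_2$, the three lower corners coincide: $C_2$ because $R_1\setminus z=R_2\setminus z$, and $C_4$ and $C_1$ because $\Delta(R_1)=\Delta(R_2)$. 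When the rhombus is nondegenerate, i.e. $C_2\neq C_4$, the interval is a four-element diamond and $C_3=C_2\vee C_4$ is forced, so the two shapes agree.

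The main obstacle is the degenerate rhombus $C_2=C_4$ (the case $x=z$), where $[C_1,C_3]$ is a chain and the three lower corners do \emph{not} determine $C_3$: already for $A=\{1,2\}$ the partitions $R_1=12$ and $R_2=1/2$ share $C_1=()$, $C_2=C_4=(1)$ yet have shapes $(2)$ and $(1,1)$. This is exactly where the hypothesis on the number of blocks intervenes. The number of parts of $C_3$ equals the number of blocks of $R_i$, which is common to $R_1,R_2$; and the two candidates for $C_3$ in the degenerate case—incrementing the unique part in which $C_2$ exceeds $C_1$, or appending a new final part—differ in their number of parts by one. Hence the common block number selects the correct candidate, so $C_3$ is determined by $(C_1,C_2,C_4)$ and this block number, giving $\mathrm{shape}(R_1)=\mathrm{shape}(R_2)$. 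To make this rigorous I would still verify the boundary subcases (a part of size $1$ or $2$ disappearing under the removal of $z$) that are left to the reader in the Proposition, since these arise precisely near the degenerate corner.
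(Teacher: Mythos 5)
Your proof is correct, but it takes a genuinely different route from the paper's. The paper argues directly by contradiction: if $R_1\neq R_2$, then (since $R_1\setminus z=R_2\setminus z$) the letter $z$ occupies different rows $y_1\neq y_2$; the equality $\Delta(R_1)=\Delta(R_2)$ then forces $z$ to lie on the jeu-de-taquin trail and, since $z$ must change its $y$-coordinate and is the last label of the trail, $z$ must sit alone at the top of the first column; row-counting then gives $y_1=y_2=$ the common number of blocks, a contradiction. You instead invert the growth local rule: the hypotheses say exactly that the three lower corners $C_1,C_2,C_4$ of the upper rhombus (Figure~\ref{rhombus}, the proposition preceding Corollary~\ref{Deltaev}) coincide for $R_1$ and $R_2$, and you recover the top corner $C_3$ from them --- by the join $C_2\vee C_4$ when $C_2\neq C_4$ (forced by box counting, since any common cover of two distinct ideals of equal cardinality must be their union), and by the block-count hypothesis in the degenerate case $C_2=C_4$, where the three-element interval $[C_1,C_3]$ restricts $C_3$ to at most two candidates of different part numbers (this restriction is sound: since every box of $(\p^2,\preceq)$ has a unique lower cover, a cover of $C_2$ yielding a three-chain over $C_1$ must add a box whose lower cover is the box $C_2\setminus C_1$, giving precisely your two candidates). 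Your final reconstruction of $R_i$ from $\bigl(R_i\setminus z,\ \mathrm{shape}(R_i)\bigr)$ is also sound, because removing the maximum $z$ preserves the ordering of block minima. What your route buys: it makes transparent why each hypothesis is needed (your $A=\{1,2\}$ example shows the block count is indispensable exactly in the degenerate rhombus), and it exhibits the lemma as an invertibility statement for the local rule of the evacuation pyramid, in the spirit of Fomin's growth diagrams. What it costs: it inherits a dependence on the rhombus proposition, including the boundary subcases the paper leaves to the reader (a dependence you rightly flag and would need to discharge), whereas the paper's trail argument is shorter and self-contained, using nothing beyond the definition of the downward slide.
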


\begin{proof} Suppose that $R_1\neq R_2$. Then, identifying partitions and labelled ideals in $\p^2$, $z$ has $y$-coordinate $y_i$ in $R_i$ ($i=1,2$), and $y_1\neq y_2$. We have $\Delta(R_1)=\Delta(R_2)$. Thus for at least one of the partitions $R_i$, the $y$-coordinate of $z$ in $R_i$ and in $\Delta(R_i)$ must differ, and we may assume that $i=1$; then the trail corresponding to 
the computation of $\Delta(R_1)$ is the first column of $R_1$, ending at $z$, and the corresponding row of $R_1$ contains only $z$. Thus $\Delta(R_1)$ has 
one row 
less than $R_1$ and $z$ is in the upper row, and first column, of $\Delta(R_1)=\Delta(R_2)$. 
Since $R_1,R_2$ have the same number of rows, $\Delta(R_2)$ has 
one row less than $R_2$, too; this is possible if and only if the trail in $R_2$ is the first column and $z$ is at the top; then $y_1=y_2$, a contradiction.
\end{proof}

\begin{proof}[Proof of Theorem \ref{evac}] The proof is by double induction on $|A|$ and $|w|$.
The theorem is clear if $A$ is empty. Now let 
$A$ be nonempty, with $a,z$ respectively the smallest and largest element. 
Let $w\in A^*$. 

1. We suppose first that $a,z$ appear in $w$.
By induction on the length of $w$, we 
have $\pi(\theta(w\setminus a))=\evac(\pi(w\setminus a))$ and $
\pi(\theta(w\setminus z))=\evac(\pi(w\setminus z)).$

Let $R_1=\pi(\theta(w))$ and $R_2=\evac(\pi(w))$. We have to show that $R_1=R_2$ and do it by verifying the hypothesis of Lemma \ref{R1R2}. 

First, note that, for any word $u$, the number of blocks of $\pi(u)$ is equal to the length of the first column of $N(u)$, hence to the length of the first column of $P(u)$, by Lemma \ref{first-column} (i). This is by Schensted's theorem equal to 
the length of the longest strictly decreasing subword of $u$. Now, the lengths of the longest strictly decreasing subword of $w$ 
and of $\theta(w)$ are clearly equal. It follows that $\pi(w)$ and $\pi(\theta(w))$ have the same number of blocks. 
Moreover the shape of $\evac(\pi(w))$ is equal to that of $\pi(w)$. Hence $R_1$ and $R_2$ have the same number of blocks.


We show now that $R_1\setminus z=R_2\setminus z$. We have $\pi(\theta(w))\setminus z=\pi(\theta(w)\setminus z)=\pi(\theta(w\setminus a))=\evac(\pi(w\setminus a))=\evac(\Delta(\pi(w)))=\evac(\pi(w))\setminus z$, 
using Lemma \ref{removing} (iii) and (ii) for respectively the first and penultimate equality and (\ref{evac-z}) for the last one. The second equality has been proved above.

We now show that $\Delta(R_1)=\Delta(R_2)$. By Lemma \ref{removing} (ii), $\Delta(R_1)=\Delta(\pi(\theta(w)))=\pi(\theta(w)\setminus a)$. This is equal to $\pi(\theta(w\setminus z))$. By the above displayed equation, this is $\evac(\pi(w\setminus z))$. By Lemma \ref{removing} (iii), this is equal to $\evac(\pi(w)\setminus z)$ and finally, by Corollary \ref{Deltaev}, to $\Delta(\evac(\pi(w)))=\Delta(R_2)$.

2. Suppose now that $a$ does not appear in $w$. Then by induction on the cardinality of the alphabet, we have $\pi(\theta_{A\setminus a}(w))=\evac_{A\setminus a}(\pi(w))$. Thus, applying $i_a$ on both sides, using (\ref{theta-A-a}) and (\ref{evac-A-a}), and noting that $i_a$ commutes with $\pi$ (the latter is defined on each alphabet), we obtain the theorem. 

If $z$ does not appear in $w$, the argument is similar.
\end{proof} 

\section{Ordering columns}\label{order}

Following \cite{LS2}, there is a natural order on columns, as follows: $\gamma_1\leq \gamma_2$ if they are nonempty and if there is a 
tableau having the two columns $\gamma_1$ and $\gamma_2$, from left to right. For the empty column $\emptyset$, we define 
$\gamma \leq \emptyset$ for any column.
For example, looking at Figure $\ref{tab}$, and viewing columns as decreasing words, we see that $dba\leq ba \leq c$.

Equivalently also, $\gamma_1\leq \gamma_2$ if and only if there is a regressive injective mapping from $\gamma_2$ 
into $\gamma_1$ (a function $f$ is {\it regressive} if $f(x)\leq x$). Note that this order extends the order of $A$, and also
the reverse inclusion order of the subsets of $A$ \cite{LS2}. 

This order on columns is compatible with the action, as follows.
\begin{proposition}\label{increasing} (i) For each column $\gamma$ and each word $w$, one has $w\cdot 
\gamma \leq  \gamma$.

(ii) For any columns $\gamma_1,\gamma_2$, and each word $w$, $\gamma_1\leq \gamma_2$ implies 
$w\cdot\gamma_1\leq w\cdot\gamma_2$.
\end{proposition}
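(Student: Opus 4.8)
My plan is to reduce both statements to the case where $w$ is a single letter, and to analyze column insertion through a counting reformulation of the order. First I would record the following equivalence: for a letter $t$, write $N_\gamma(t)=|\{z\in\gamma:z\le t\}|$; then $\gamma_1\le\gamma_2$ holds if and only if $N_{\gamma_2}(t)\le N_{\gamma_1}(t)$ for every $t\in A$. The forward implication is immediate, since a regressive injection $f\colon\gamma_2\to\gamma_1$ restricts, for each $t$, to an injection of $\{z\in\gamma_2:z\le t\}$ into $\{w\in\gamma_1:w\le t\}$; the converse is a standard greedy (Hall) matching argument, the binding constraints being exactly the down-sets $\{z\in\gamma_2:z\le t\}$. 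This reformulation also makes reflexivity and transitivity of $\le$ transparent, which legitimizes the inductions below. The empty-column convention $\gamma\le 1$ is consistent, since $N_1\equiv 0$.

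Because the action is a \emph{left} action, $(xw')\cdot\gamma=x\cdot(w'\cdot\gamma)$ for a letter $x$, so both parts follow by induction on $|w|$ from two single-letter facts: \textbf{(I)} $x\cdot\gamma\le\gamma$, and \textbf{(II)} $\gamma_1\le\gamma_2$ implies $x\cdot\gamma_1\le x\cdot\gamma_2$. Indeed, for (i) one has $w\cdot\gamma=x\cdot(w'\cdot\gamma)\le w'\cdot\gamma\le\gamma$ by (I), the inductive hypothesis and transitivity; for (ii) one chains (II) with the inductive hypothesis. The engine of both is the effect of a single insertion on $N_\gamma$. Write $y_\gamma$ for the element bumped when $x$ is column-inserted into $\gamma$, with the convention $y_\gamma=\infty$ in the append case $x>\max\gamma$. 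Inspecting the definition of column insertion, one checks directly that $N_{x\cdot\gamma}(t)=N_\gamma(t)$ for $t<x$ and for $t\ge y_\gamma$, while $N_{x\cdot\gamma}(t)=N_\gamma(t)+1$ for $x\le t<y_\gamma$ (removing $y_\gamma$ and inserting the smaller $x$ raises the count precisely on the window $[x,y_\gamma)$). In particular $N_{x\cdot\gamma}(t)\ge N_\gamma(t)$ for all $t$, which is exactly (I).

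The substance is in (II), which I expect to be the main obstacle. Fix $t$; for $t<x$ nothing changes and the hypothesis passes through, so assume $t\ge x$ and write $N_{x\cdot\gamma_i}(t)=N_{\gamma_i}(t)+\varepsilon_i$ with $\varepsilon_i=1$ when $t<y_{\gamma_i}$ and $\varepsilon_i=0$ otherwise. The only dangerous configuration is $\varepsilon_2=1,\ \varepsilon_1=0$, that is $x\le y_{\gamma_1}\le t<y_{\gamma_2}$, for then I must upgrade the hypothesis $N_{\gamma_2}(t)\le N_{\gamma_1}(t)$ to a strict inequality. But in this configuration $\gamma_1$ meets the interval $[x,t]$ (it contains $y_{\gamma_1}$), whereas $\gamma_2$ does not (its least element $\ge x$ exceeds $t$). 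Splitting $N_{\gamma_i}(t)$ as (number of elements of $\gamma_i$ below $x$) plus (number in $[x,t]$), the first summands satisfy $N_{\gamma_2}(s)\le N_{\gamma_1}(s)$ by hypothesis, where $s$ is the largest letter $<x$, while the second summand is $0$ for $\gamma_2$ and at least $1$ for $\gamma_1$. Hence $N_{\gamma_2}(t)<N_{\gamma_1}(t)$, exactly the strict inequality required, so $N_{x\cdot\gamma_2}(t)\le N_{x\cdot\gamma_1}(t)$. In every other configuration the indicator terms cooperate and the non-strict hypothesis already suffices. This establishes (II) for a single letter, and the inductions of the previous paragraph then give both (i) and (ii).
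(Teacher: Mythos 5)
Your proof is correct, but it takes a genuinely different route from the paper's for the key part (ii). Both proofs reduce to the single-letter case via the left-action identity, and your part (i) is essentially the paper's observation (append enlarges the column; bumping replaces $y$ by the smaller $x$) recast in counting form. The real divergence is in (ii): the paper argues directly on heights, comparing the heights $k_1,k_2$ at which $x$ lands in $x\cdot\gamma_1$ and $x\cdot\gamma_2$, showing $k_2\leq k_1$ by comparing the bumped letters, and then checking the entrywise inequalities only at heights $k_1$ and $k_2$ (with a picture and some case distinctions about whether the relevant letters $c,d$ exist). You instead reformulate the regressive-injection order as pointwise domination of the counting functions $N_\gamma(t)=|\{z\in\gamma: z\leq t\}|$ --- a correct equivalence, obtained from the injection characterization the paper itself records in its section on ordering columns, with the converse by the greedy/Hall matching you sketch --- so that one insertion becomes a $+1$ on the window $[x,y_\gamma)$ and all of (ii) collapses to a single dangerous configuration $x\leq y_{\gamma_1}\leq t<y_{\gamma_2}$, where you correctly upgrade the hypothesis to a strict inequality by noting $y_{\gamma_1}\in\gamma_1\cap[x,t]$ while $\gamma_2\cap[x,t]=\emptyset$ by minimality of $y_{\gamma_2}$. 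What each buys: the paper's argument is short and visual but needs the height bookkeeping and existence caveats; yours is more systematic, makes reflexivity and transitivity of $\leq$ (needed for the induction) transparent, and handles the empty column and append case ($y_\gamma=\infty$) uniformly. One cosmetic point: when $x=\min(A)$ there is no largest letter $s<x$, and your appeal to the hypothesis at $s$ should be read as comparing the counts $|\{z\in\gamma_i : z<x\}|$, which are then both $0$; the argument goes through unchanged.
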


The next lemma is due to Bokut, Chen, Chen, Li (\cite[ Lemma 4.1]{BCCL}), in a formulation communicated to us by Darij Grinberg; moreover,  Lemma \ref{LL} is due to him, together with the proof of the second part of Proposition \ref{increasing}, which simplifies our first version.

For any column $\gamma$ and any letter $a \in A$, define $L_a(\gamma)$ to be the number of letters $\leq a$ in $\gamma$. 

\begin{lemma}\label{L} Two columns $\gamma_1$ and $\gamma_2$ over $A$ satisfy $\gamma_1 \leq \gamma_2$ if and only if each $a \in A$ satisfies $L_a(\gamma_1) \geq L_a(\gamma_2)$. 
\end{lemma}

\begin{lemma}\label{LL}
Let $A$ be an alphabet, $\gamma$ a column over $A$ and $x, a\in A$. Let $y=\max(\ell\in\gamma | \ell\leq a)$, if this set is nonempty, and otherwise, let $y=-\infty$ (smaller than any element in $A$). Then:
\begin{enumerate}
\item[i)] if $a<x$, then $L_a(x\cdot\gamma)=L_a(\gamma)$;

\item[ii)] if $y<x\leq a$, then $L_a(x\cdot\gamma)=L_a(\gamma)+1$.

\item[iii)] if $x\leq y$, then $L_a(x\cdot\gamma)=L_a(\gamma)$;
\end{enumerate}
\end{lemma}

\begin{proof}
i) In this case, $x$ does not bump any $\ell\leq a$ in $\gamma$. Therefore, the number of letters $\leq a$ remains the same.

ii) In this case, $x$ is either going to bump a letter $>a$ or will be added at the top of $\gamma$. In either cases, because $x\leq a$, $x$ is added to the count of letters $\leq a$. Therefore $L_a(x\cdot\gamma)=L_a(\gamma)+1$.

iii) In this case, $x$ will bump a letter that is $\leq a$. The number of letters $\leq a$ remains the same.
\end{proof}

\begin{proof}[Proof of Proposition \ref{increasing}] It is enough to prove both properties when $w=x\in A$. 

(i) We refer to the definition of the column insertion of $x$ into $
\gamma$ in Section \ref{insert}. In the first case, $x\cdot \gamma$ contains $\gamma$ and the result follows. In the 
second case, we have, viewing columns as decreasing words, $\gamma=uyv$ and $x\cdot\gamma=uxv$, with $y\in A$ 
and $x\leq y$; the result follows.

%

(ii) 
Let $a\in A$. Note that by Lemma \ref{LL}, $L_a(x\cdot\gamma_i)\geq L_a(\gamma_i)$. Using the fact that $\gamma_1\leq \gamma_2$ and Lemma \ref{L}, if $L_a(x\cdot\gamma_2)=L_a(\gamma_2)$ we obtain
\begin{align*}
L_a(x\cdot\gamma_2)=L_a(\gamma_2)\leq L_a(\gamma_1) \leq L_a(x\cdot\gamma_1).
\end{align*}
In the same way, if $L_a(x\cdot \gamma_1)> L_a(\gamma_1)$ we obtain
\begin{align*}
L_a(x\cdot\gamma_2)\leq L_a(\gamma_2)+1 \leq L_a(\gamma_1)+1\leq L_a(x\cdot\gamma_1).
\end{align*}
There remains only one case to verify:  $L_a(x\cdot \gamma_1)= L_a(\gamma_1)$ and $L_a(x\cdot\gamma_2)=L_a(\gamma_2)+1$. Let $y_1=\max(\ell\in\gamma_1 | \ell\leq a)$ and $y_2=\max(\ell\in\gamma_2 | \ell\leq a)$, with the same convention as for $y$ in Lemma \ref{LL}.

We know that $L_a(\gamma_1)\geq L_a(\gamma_2)$. If we have strict inequality, then $L_a(\gamma_1)\geq L_a(\gamma_2)+1$, hence $L_a(x\cdot \gamma_1)= L_a(\gamma_1)\geq L_a(\gamma_2)+1=L_a(x\cdot\gamma_2)$ and we are done.

Thus we may assume that $L_a(\gamma_1)= L_a(\gamma_2)$; then the height of $y_1$ in $\gamma_1$ is equal to the height of $y_2$ in $\gamma_2$, or they are both $-\infty$. Therefore, because $\gamma_1\leq\gamma_2$, we have $y_1\leq y_2$. Now, using Lemma \ref{LL}, $L_a(x\cdot\gamma_2)=L_a(\gamma_2)+1$ implies that $y_2 < x \leq a$. Thus $y_1<x\leq a$. We obtain by Lemma \ref{LL} (ii) that $L_a(x\cdot\gamma_1)=L_a(\gamma_1)+1$ and
\begin{align*}
L_a(x\cdot\gamma_1)=L_a(\gamma_1)+1=L_a(\gamma_2)+1=L_a(x\cdot\gamma_2).
\end{align*}
This conclude the proof.
\end{proof}

\section{$J$-relations on the stylic monoid}

\subsection{$J$-triviality}

Recall that a monoid $M$ is called {\it $J$-trivial} if for any elements $u,v\in M$ such that $MuM=MvM$, one has $u=v$. 

\begin{theorem}\label{J} $\Styl(A)$ is a $J$-trivial monoid.
\end{theorem}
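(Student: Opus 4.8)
The plan is to exploit the order-theoretic properties of the action on columns established in Proposition \ref{increasing}, together with the fact that, by its very definition, $\Styl(A)$ is a monoid of endofunctions of $\C(A)$, so two of its elements coincide precisely when they act identically on every column. Thus proving $J$-triviality amounts to showing that $J$-equivalent elements act the same way on all columns.

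First I would unwind the hypothesis $MuM = MvM$, where $M = \Styl(A)$. Since $M$ is a monoid (so $1 \in M$), one has $u = 1\cdot u\cdot 1 \in MuM = MvM$, hence $u = pvq$ for some $p,q \in M$; by symmetry, $v = p'uq'$ for some $p',q' \in M$. Recall here that the action of a monoid element $m = \mu(w)$ on a column is just $w\cdot\gamma$, so the two clauses of Proposition \ref{increasing}, although stated for words, apply verbatim to elements of $\Styl(A)$.

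Next I would prove the key monotonicity inequality: for every column $\gamma$, one has $u\cdot\gamma \leq v\cdot\gamma$. Writing $u = pvq$ and setting $\gamma' = q\cdot\gamma$, Proposition \ref{increasing}(i) gives $\gamma' \leq \gamma$; applying Proposition \ref{increasing}(ii) to the element $v$ then yields $v\cdot\gamma' \leq v\cdot\gamma$; and a second application of Proposition \ref{increasing}(i), now to $p$, gives $u\cdot\gamma = p\cdot(v\cdot\gamma') \leq v\cdot\gamma' \leq v\cdot\gamma$. Running the symmetric argument from $v = p'uq'$ produces $v\cdot\gamma \leq u\cdot\gamma$ for every column $\gamma$ as well.

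Finally, since the relation $\leq$ on columns is a genuine partial order (in particular antisymmetric), the two inequalities force $u\cdot\gamma = v\cdot\gamma$ for every column $\gamma$. As $\Styl(A)$ is by definition the monoid of endofunctions of $\C(A)$ induced by this action, equality of the induced endofunctions means $u = v$, which is exactly $J$-triviality. I do not expect a serious obstacle: the whole argument rests on the two clauses of Proposition \ref{increasing} (the action is decreasing, and order-preserving) and on antisymmetry of the column order. The only points deserving care are the bookkeeping of which clause is applied to which of the factors $p,q$, and the conceptual observation that equality of the induced actions \emph{is} equality in $\Styl(A)$, so that no separate faithfulness statement is needed.
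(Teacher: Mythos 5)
Your proof is correct and follows essentially the same route as the paper: decompose $u=pvq$ using $MuM=MvM$, then chain Proposition \ref{increasing}(i) on the outer factors with \ref{increasing}(ii) on the middle one to get $u\cdot\gamma\leq v\cdot\gamma$ for every column $\gamma$, conclude equality by symmetry and antisymmetry, and use that $\Styl(A)$ acts faithfully (being by definition a monoid of endofunctions of $\C(A)$). The paper's proof is the same argument phrased at the level of words ($v\equiv_{styl}xuy$, then $u\cdot\gamma\geq uy\cdot\gamma\geq xuy\cdot\gamma=v\cdot\gamma$), so no substantive difference.
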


\begin{proof} We mimick the proof of Proposition 4.15 in \cite{P}. Suppose that $u,v$ are words such that $M\mu(u)M=M\mu(v)M$, with $M=\Styl(A)$. Then for some words $x,y$, $v\equiv_{styl} xuy$. For any column $\gamma$, we have by 
Proposition \ref{increasing}, $\gamma\geq y\cdot\gamma$, thus $u\cdot\gamma \geq uy\cdot\gamma \geq xuy\cdot 
\gamma=v\cdot\gamma$. Symmetrically, $v\cdot\gamma \geq u\cdot\gamma $. Thus $v\cdot\gamma = u\cdot\gamma 
$. This implies that $u\equiv_{styl} v $ and $\mu(u)=\mu(v)$.
\end{proof}

In a $J$-trivial monoid, one defines the {\it $J$-order} $\leq_J$ by: $u\leq_J v $ if and only if $u\in MvM$. We study this order below.


\subsection{Left $N$-insertion}\label{left}

We describe now an algorithm which constructs, given a letter $x$ and an $N$-tableau $T$, an $N$-tableau denoted $x\to T$, and which will be shown to correspond to left multiplication by $x$ in the stylic monoid. This will serve us to prove that the $J$-order is graded (Theorem \ref{graded}). 

Let the rows of $T$ be $R_1,\ldots,R_k$ (from the lowest one to the highest), which we also view as subsets of $A$. Let 
$p_i=\min(R_i)$, the leftmost element in the row $R_i$; in particular, $p_1$ is the minimum of all elements in $T$. 
For each $i=1,\ldots,k$, let $y_i$ be the smallest element in $R_i$ which is greater than $x$, if it exists; we write $y_i=\emptyset$ if it does not exist, and $y_i\neq \emptyset$ to express that it exists. Define also $r$ to be the largest $i$ such that $x\in R_i$; if no such $i$ exists, we put $r=0$. 

\noindent Case 1: if $x<p_1$, that is, $x$ is smaller than any element in $T$, then $x\to T$ is obtained by replacing $R_1$ by $R_1\cup x$. 

\noindent Case 2: if $x$ is equal to some $p_i$, that is, if $x$ appears in the first column of $T$, then $(x\to T)=T$.

\noindent Case 3: we assume now that we are not in Case 1 nor 2. Then we have $x>p_1$. 

\noindent Subcase 3.1: if $x>p_k$, we let $t=k+1$ and $R_{k+1}$ be a new empty row.

\noindent Subcase 3.2: if $x\leq p_k$, we let $t$ be minimum with $x\leq p_t$. Then $t\leq k$, and $x<p_t$ since we are not in Case 2.

In both subcases $x\notin R_t$. Hence, 
since any element appearing in a row of an $N$-tableau also appears in lower rows, 
we must have $r<t$. Moreover, in both subcases, $p_{t-1}<x$.

In Case 3 (both subcases), $x\to T$ is obtained from $T$ by performing the two following operations (which commute): 

\noindent Step (i): add $x$ to the rows 
$R_{r+1},\ldots,R_t$ (which we call the {\it active rows}, since only these rows are modified); 

\noindent Step (ii): for $i$ satisfying $r+2\leq i\leq t$, 
remove 
$y_i$ from $R_i$ if $\emptyset\neq y_i=y_{i-1}\neq \emptyset$. 

See Figure \ref{leftInsertion} for an example: $x=d$, $r=1$, $t=4$, the 
active rows are $R_2,R_3,R_4$, $y_4=f,y_3=f,y_2=e$, hence $y_4$ disappears after left insertion of $d$, and $d$ is added in rows 2,3,4.

\begin{figure}
\begin{ytableau}
\none[]&\none[]&g\\
\none[]&\none[]&\boldsymbol f&g\\
\none[]&\none[]&c&\boldsymbol f&g\\
\none[d\xrightarrow{N}]&\none[]&b&c&\boldsymbol e&f&g\\
\none[]&\none[]&a&b&c&d&e&f&g
\end{ytableau}
\begin{ytableau}
\none[]&g\\
\none[]&d&g\\
\none[]&c&d&f&g\\
\none[=]&b&c&d&e&f&g\\
\none[]&a&b&c&d&e&f&g
\end{ytableau}
\caption{Left insertion}\label{leftInsertion}
\end{figure}

For later use, define $Y(x,T)=\{y \mid \exists i, r+1\leq i\leq t, y=y_i\neq\emptyset\}$.
If $Y(x,T)$ is empty, Step (ii) in Case 3 of the algorithm is empty.
If $Y(x,T)$ is nonempty, let 
$s$  be the largest $i$ such that $r+1\leq i\leq t$, and that $y_i$ exists. Then $Y(x,T)=\{y_i, i=r+1,\ldots,s\}$ and Step (ii) of Case 3 is restricted to the $i$'s satisfying $r+2\leq i \leq s$.

%
One notes also that if $t\leq k$, then $p_t>x$, $s=t$, and $y_t=p_t$.

\begin{proposition}\label{xtoTN}  $x\to T$ is an $N$-tableau. 
\end{proposition}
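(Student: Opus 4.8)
The plan is to verify directly that the tableau $x\to T$ produced by the left-insertion algorithm satisfies the two defining conditions of an $N$-tableau: that every row is strictly increasing, and that each row is contained in the row immediately below it. I would organize the argument around the three cases of the algorithm. Cases~1 and~2 are immediate: in Case~1 we merely adjoin $x=\min$ at the left of $R_1$ (and leave everything else untouched), which preserves both strict increase and the containment chain because $x\le p_1\le p_i$ for all $i$; in Case~2 nothing changes, so $x\to T=T$ is trivially an $N$-tableau. The substance is entirely in Case~3.

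For Case~3, I would first fix notation following the statement: $r<t$, we add $x$ to rows $R_{r+1},\dots,R_t$, and for $r+2\le i\le t$ we delete $y_i$ from $R_i$ whenever $y_i=y_{i-1}\ne\emptyset$. Checking that each modified row is still strictly increasing reduces to checking it as a \emph{set} (strict increase just means we have a genuine subset of $A$), so I must confirm that $x$ was not already present in the rows it is added to---this is exactly where the inequality $r<t$ is used, since $r$ is the largest index with $x\in R_i$, so $x\notin R_i$ for $r+1\le i\le t$. The deletions in step~(ii) only remove elements, so they cannot destroy the strict-increase property. Thus each row of $x\to T$ is a legitimate strictly increasing word.

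The heart of the proof, and the step I expect to be the main obstacle, is the containment condition $R'_i\subset R'_{i+1}$ for the new rows $R'_i$, particularly at the three boundary indices where the structure of the modification changes: the bottom boundary $i=r$ (where $x$ enters for the first time, going from $R'_r=R_r$ to $R'_{r+1}=R_{r+1}\cup x$), the interior indices $r+1\le i<t$ (where $x$ sits in both rows but a $y_i$ may be removed), and the top boundary $i=t$. For the bottom boundary I would use that $x\in R_r$ would contradict maximality of $r$ only if $x$ appeared higher, but here we need $R_r\subset R_{r+1}\cup x$, which follows from the original containment $R_r\subset R_{r+1}$. For the interior, the delicate point is that removing $y_i$ from $R_i$ must not break $R_{i-1}'\subset R_i'$: here I would argue that $y_i$ is removed precisely when $y_i=y_{i-1}$, and since $y_{i-1}$ is the smallest element of $R_{i-1}$ exceeding $x$, the element $y_i$ cannot lie in $R'_{i-1}$ after its own (possible) deletion, so the containment survives. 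One must track carefully, using the definitions of the $y_i$ and of $Y(x,T)$ together with the refinement that step~(ii) is effectively restricted to $r+2\le i\le s$, that the bumped elements propagate consistently up the tableau, mirroring why Schensted-style insertion preserves tableau shape.

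Finally, at the top boundary $i=t$ I would handle the two sub-cases flagged in the statement: either $t\le k$ with $R_t$ a genuine row (so $p_t>x$ exists and $s=t$), or $t=k+1$ with $R_t$ a newly created empty row to which $x$ is adjoined. In the latter case the new top row is $\{x\}$ and the containment $\{x\}\subset R'_{k}$ must be checked, which holds because $x$ was just added to $R_{k}$ as well. Assembling these boundary verifications with the generic interior step completes the proof that $x\to T$ is an $N$-tableau. I would keep the exposition anchored to Figure-style reasoning about which rows contain $x$ and which $y_i$ coincide, since the combinatorics of the deletions is where sign errors and off-by-one mistakes are most likely to creep in.
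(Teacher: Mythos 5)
Your plan for the containment chain has the right shape, but as written the proposal has two genuine gaps. First, it omits one of the conditions that must be verified: an $N$-tableau is by definition a \emph{tableau}, so besides the nesting of the rows you must check that the first column strictly increases, i.e.\ that the minima of the new rows satisfy $\min(R'_1)<\min(R'_2)<\cdots$. This is not a consequence of your two checks: the rows $R_1=\{1,2\}\supset R_2=\{1\}$ are strictly increasing and nested, yet the first column repeats the entry $1$. The paper's proof devotes its final paragraph to exactly this point, observing that $\min(R'_i)=p_i$ for $i\neq t$ while $\min(R'_t)=x$, and that $p_1<\cdots<p_{t-1}<x<p_t<p_{t+1}<\cdots$ by the definition of $t$ (note also that the deletions of step (ii) remove only letters $y_i>x$, while each affected row contains a letter $\leq x$, so the deletions never disturb these minima). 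Your proposal never addresses this condition at all.

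Second, your containment argument is inverted in two of its three locations and, at the crux, only gestured at. With the paper's bottom-up numbering (which you adopt elsewhere, e.g.\ when you correctly check $\{x\}\subset R'_k$ at the top boundary), the condition is $R'_{i+1}\subset R'_i$, not $R'_i\subset R'_{i+1}$; in particular at the bottom boundary the needed inclusion is $R'_{r+1}=R_{r+1}\cup x\subset R_r=R'_r$, which holds because $x\in R_r$ by the very definition of $r$ --- your stated reason, ``the original containment $R_r\subset R_{r+1}$'', is backwards and does not suffice. In the interior, the danger is not that ``$y_i$ cannot lie in $R'_{i-1}$ after its own deletion'' (deleting $y_i$ from row $i$ only shrinks $R'_i$ and is harmless for $R'_i\subset R'_{i-1}$); the danger is that $y_{i-1}$, deleted from row $i-1$ when $y_{i-1}=y_{i-2}$, might survive in row $i$. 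The saving observation, which you must actually prove, is the cascade: if $y_{i-1}\in R_i$, then $R_i\subset R_{i-1}$ gives $y_{i-1}\leq y_i$, while $y_{i-1}\in R_i$ and $y_{i-1}>x$ give $y_i\leq y_{i-1}$, so $y_i=y_{i-1}$ and the rule deletes $y_i$ from row $i$ as well. The paper packages this as a clean standalone lemma about a nested chain $E_{r+1}\supset\cdots\supset E_s$ with minima $y_{r+1},\ldots,y_s$, from which $y_i$ is deleted exactly when $y_i=y_{i-1}$; it then splits each row into the three slices $R'_i\cap\{c<x\}$, $R'_i\cap\{x\}$ and $R'_i\cap\{c>x\}$ and applies the lemma to the third slice, a decomposition that disposes of all your boundary cases uniformly. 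Writing ``one must track carefully\ldots mirroring Schensted'' leaves precisely this step unproved.
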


We begin by a simple lemma, whose proof is left to the reader. 

\begin{lemma}\label{EE'} Let $r< s$, and let $E_{r+1}\supseteq E_{r+2}\supseteq\ldots \supseteq E_s$ be a decreasing sequence of subsets of a totally ordered set, with 
minima $y_{r+1},\ldots,y_s$. Define $E'_{r+1}=E_{r+1}$, and for $i=r+2,\ldots,s$, $E'_i=E_i$ if $y_i\neq y_{i-1}$, and 
$E'_i=E_i\setminus y_i$ if $y_i=y_{i-1}$. Then $E'_{r+1}\supseteq E'_{r+2}\supseteq\ldots \supseteq E'_s$.
\end{lemma}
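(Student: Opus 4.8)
The plan is to reduce the asserted chain $E'_{r+1}\supseteq E'_{r+2}\supseteq\cdots\supseteq E'_s$ to the verification of a single consecutive inclusion $E'_{i-1}\supseteq E'_i$ for each $i$ with $r+2\le i\le s$, which by transitivity yields the whole chain. First I would record the essential preliminary fact that the minima are weakly increasing, $y_{r+1}\le y_{r+2}\le\cdots\le y_s$: since $E_{i-1}\supseteq E_i$, every element of $E_i$ already lies in $E_{i-1}$, so $y_i=\min(E_i)\ge\min(E_{i-1})=y_{i-1}$. Consequently, whenever the definition distinguishes the two sets by the condition $y_i\neq y_{i-1}$, we in fact have the strict inequality $y_i>y_{i-1}$; and then every element of $E_i$ is $\ge y_i>y_{i-1}$, so that $y_{i-1}\notin E_i$. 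This last observation is the pivot of the whole argument.

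With these facts in hand I would split into four cases, according to whether $y_i=y_{i-1}$ (which governs the form of $E'_i$) and whether $E'_{i-1}$ was produced by a deletion, i.e.\ whether $E'_{i-1}=E_{i-1}$ or $E'_{i-1}=E_{i-1}\setminus y_{i-1}$. When $E'_{i-1}=E_{i-1}$ the inclusion is immediate, since $E'_i$ is always a subset of $E_i$ and $E_i\subseteq E_{i-1}=E'_{i-1}$. The two substantive cases are the deletion cases. If $y_i=y_{i-1}$ and $E'_{i-1}=E_{i-1}\setminus y_{i-1}$, then $E'_i=E_i\setminus y_i=E_i\setminus y_{i-1}$, and removing the common element $y_{i-1}$ from both sides of $E_{i-1}\supseteq E_i$ preserves the inclusion. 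If instead $y_i\neq y_{i-1}$ and $E'_{i-1}=E_{i-1}\setminus y_{i-1}$, then $E'_i=E_i$, and the inclusion $E_{i-1}\setminus y_{i-1}\supseteq E_i$ holds precisely because $y_{i-1}\notin E_i$, the pivotal fact established above.

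There is no serious obstacle in this lemma; the only point requiring genuine care, and hence the \emph{hard part} such as it is, is the asymmetric deletion case in which $y_{i-1}$ has been removed from the larger set $E_{i-1}$ while nothing has been removed from the smaller set $E_i$. One must confirm that the element $y_{i-1}$ deleted from $E_{i-1}$ was never present in $E_i$ to begin with, and this is exactly where the monotonicity of the minima is indispensable. Once that single case is dispatched, the remaining cases are routine, and assembling the consecutive inclusions completes the proof.
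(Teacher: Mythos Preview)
Your proof is correct: the monotonicity $y_{r+1}\le\cdots\le y_s$ together with the observation that $y_i\neq y_{i-1}$ forces $y_{i-1}\notin E_i$ is exactly what is needed, and your four-case verification of each consecutive inclusion is clean and complete. The paper in fact leaves this lemma to the reader, so there is no proof to compare against; your argument supplies precisely the routine verification the authors had in mind.
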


\begin{proof}[Proof of Proposition \ref{xtoTN}]
The only nontrivial case to consider is Case 3.
Recall that the sequence of sets $R_i$ is by definition decreasing. Denote by $R'_1,R'_2,\ldots$ the rows of $x\to T$. We verify first that this sequence of sets is decreasing. It 
is enough to show it separately for the three sequences of subsets $R'_i\cap \{c\in A\mid c<x\}$, $R'_i\cap \{x\}$ and $R'_i\cap 
\{c\in A\mid c>x\}$. For the first sequence, it follows from the equality $R'_i\cap \{c\in A\mid c<x\}=R_i\cap \{c\in A\mid c<x\}$. For the second, it is by construction the sequence of $t$ sets $\{x\}$, followed by empty sets. 

For the third sequence, suppose first that $Y(x,T)$ is empty; then Step (ii) is empty, and we have $R'_i\cap \{c\in A\mid c>x\}=R_i\cap \{c\in A\mid c>x\}$; this implies that the sequence is decreasing. Suppose now that $Y(x,T)$ is nonempty. Let $E_i=R_i\cap 
\{c\in A\mid c>x\}$ and $E'_i=R'_i\cap 
\{c\in A\mid c>x\}$; then by construction, for $r+2\leq i\leq s$, the sets $E_i,E'_i$ satisfy the hypothesis of Lemma \ref{EE'}, so 
that $E_{r+1}=E'_{r+1}\supseteq E'_{r+2}\supseteq\ldots \supseteq E'_s$. Moreover $E_i=E'_i$ for $1\leq i\leq r+1$ and 
$E'_i=E_i=\emptyset$ for $i\geq s+1$. Thus the sequence of sets $E'_i$ is decreasing.

We show now that the minima of $R'_i$ strictly increase. This follows from the fact that $p_i=\min(R_i)=\min(R'_i)$, except if $i=t$, in which case $\min(R'_t)=x$. Then the property follows from $p_1<\ldots<p_{t-1}<x<p_t<p_{t+1}<\ldots$ ($p_t,p_{t+1},\ldots$ may not exist, in which case the sequence of inequalities stops at $x$).
\end{proof}

Recall that $N$-tableaux correspond bijectively to elements in $\Styl(A)$, and that we denote by $RW(T)$ the row-word of an 
$N$-tableau $T$: one has $T=N(RW(T))$.

\begin{theorem}\label{left-insert} Let $T$ be an $N$-tableau and $x$ a letter. Then $(x\to T)=N(xRW(T))$.
\end{theorem}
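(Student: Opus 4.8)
The plan is to establish the identity $x \to T = N(xr(T))$ by showing that both sides are $N$-tableaux representing the same stylic class, and then invoking the bijection of Theorem \ref{bijection}. Since $N(xr(T))$ is by definition the $N$-tableau associated to the word $xr(T)$, it suffices to prove that the $N$-tableau $x \to T$ constructed by the left-insertion algorithm satisfies $r(x \to T) \equiv_{styl} xr(T)$; then applying $N$ to both sides and using Theorem \ref{bijection} together with Eq.(\ref{wequivrNw}) (which gives $w \equiv_{styl} r(N(w))$) yields the result. So the real content is the congruence $r(x \to T) \equiv_{styl} x \, r(T)$.

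First I would reduce to the three cases of the algorithm. In Case 1 ($x \le p_1$), the tableau $x \to T$ is obtained by adjoining $x$ to the bottom row $R_1$; since $x$ becomes the new minimum, its row-word is $x$ prepended to $r(T)$ after suitable reordering, and the congruence $r(x\to T) \equiv_{styl} x\,r(T)$ should be immediate (or follow from a short plactic/idempotent manipulation). In Case 2 ($x = p_t$ already present), we have $x \to T = T$, so I must show $x \, r(T) \equiv_{styl} r(T)$; this is exactly the kind of absorption encoded by Lemma \ref{aua}, since $x$ already appears suitably in $T$ and the letters it would interact with are $\ge x$. The substantive case is Case 3, where $x$ is added to rows $R_{r+1},\dots,R_t$ and certain duplicated bumped letters $y_i$ are deleted.

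For Case 3, the strategy is to track, row by row, how prepending $x$ to $r(T)$ propagates through the row-word under $\equiv_{styl}$. I would write $r(T)$ as the concatenation of row-words read from top to bottom and analyze the effect of inserting $x$ at the front: in the bottom rows $R_1,\dots,R_r$ the letter $x$ is already present, so by the idempotent relation and Lemma \ref{aua} it is absorbed there; in rows $R_{r+1},\dots,R_t$ the insertion adds $x$ and, where the smallest element $>x$ coincides between consecutive rows ($y_i = y_{i-1}$), a copy of $y_i$ is rendered redundant and removed via $y\,y \equiv_{styl} y$ combined with plactic moves. The bookkeeping here should mirror the analysis in Lemma \ref{deltaN} and Lemma \ref{NT}, where row-words of nested supports were simplified using $\D_B(C)$ and the idempotent relations. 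The main obstacle will be making this propagation argument precise: I expect to need an inductive claim on the rows that simultaneously controls which letters are present, which get bumped, and which duplicates are annihilated, matching exactly the set $Y(x,T)$ and the deletion rule (ii) of the algorithm. An alternative, possibly cleaner route would be to verify directly that $x \to T = x \cdot \gamma$ behavior on columns matches left multiplication by $x$ — i.e. show the algorithm computes the action of $\mu(x)$ — using Lemma \ref{first-column}(ii) to read off columns of $N(xr(T))$ and comparing with the rows of $x \to T$; but I anticipate the row-word congruence route to be the most self-contained given the tools already developed.
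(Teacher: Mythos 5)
Your overall reduction is exactly the paper's: the paper likewise proves $x\,r(T)\equiv_{styl} r(x\to T)$ (more precisely, it exhibits an $N$-tableau $T'$ with $r(T')\equiv_{styl}x\,r(T)$ and then checks $T'=x\to T$ row by row) and concludes via Theorem \ref{bijection}. Your Case 2 route works: since $x=p_t$ is the first letter of $u_t$ and every letter of $u_k\cdots u_{t+1}$ lies in $R_{t+1}\subset\cdots\subset R_t$, hence is $\geq x$, Lemma \ref{aua} does give $x\,r(T)\equiv_{styl}r(T)$. (Case 1 is not quite ``immediate'' when $x<p_1$ — one still has to move $x$ past $u_k\cdots u_2$ modulo $\equiv_{styl}$, which the paper also routes through its general machinery — but this is a minor point.)

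The genuine gap is Case 3, where your proposal stops at a statement of intent. The paper's proof there rests on two lemmas that your sketch neither states nor replaces. First, Lemma \ref{filtrationx}: the chain $S_1=R_1\cup x\supset S_2=\D_{R_2\cup x}(R_1)\supset\cdots\supset S_{k+1}=\D_{R_{k+1}\cup x}(R_k)$ is itself an $N$-filtration; this is what lets one read the right-hand side of Lemma \ref{deltaN} (applied with $\delta(x)=1$) as the row-word of an honest $N$-tableau $T'$ satisfying $r(T')\equiv_{styl}x\,r(T)$, and its proof (preservation of nesting and strict increase of the minima under $\D$) is not automatic. Second, and decisively, Lemma \ref{RxS}: the five-case computation of $\D_{R\cup x}(S)$ for $R\subset S$ with $\min(S)<\min(R)$, whose case (5) (if $y_R=y_S$ then $\D_{R\cup x}(S)=(R\cup x)\setminus y_R$) is precisely the deletion rule (ii) of the algorithm, and whose cases (1)--(4) are what make the indices $r$, $s$, $t$ and the set $Y(x,T)$ match up with the sets $S_i$. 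The ``inductive claim that simultaneously controls which letters are present, which get bumped, and which duplicates are annihilated'' that you anticipate needing is exactly this lemma together with the row-by-row verification $S_i=R'_i$, and it constitutes the bulk of the paper's proof; without it the congruence $r(x\to T)\equiv_{styl}x\,r(T)$ in Case 3 is asserted, not proven. Your alternative route — verifying directly that the algorithm computes the action of $\mu(x)$ on columns — would require comparing the actions on every column of $\C(A)$, which is not easier and is precisely what the row-word strategy is designed to avoid.
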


In other words, left multiplication by $x$ in the stylic monoid corresponds to the left insertion into $N$-tableaux; similarly, 
we already know that right multiplication by $x$  corresponds to right $N$-insertion.

\begin{remark} It was suggested by the referee that $x\to T$ may be also computed by the following algorithm: do Schensted left insertion of $x^n$ into $T$ for $n$ large enough, obtaining a tableau $S$, and then replace each row of $S$ by its underlying subset. 
\end{remark}

We need several lemmas. 
Recall that $\D$ has been defined in Section \ref{delta}. 

\begin{lemma}\label{filtrationx} Let $R_1\supseteq\ldots\supseteq R_k$ be an $N$-filtration, and $x\in A$. Then one has the sequence of inclusions $R_1\cup 
x\supseteq 
\D_{R_2\cup x}(R_1)\supseteq \ldots\supseteq\D_{R_k\cup x}(R_{k-1})\supseteq\D_{R_{k+1}\cup x}(R_{k})$ (with $R_{k+1}=\emptyset$), and this sequence is an $N$-filtration (the last set may be empty, in which case it is removed). 
\end{lemma}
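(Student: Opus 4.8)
The plan is to set $S_1=R_1\cup x$ and $S_{i+1}=\D_{R_{i+1}\cup x}(R_i)$ for $1\le i\le k$ (with $R_{k+1}=\emptyset$), so that the asserted chain is $S_1\supseteq S_2\supseteq\cdots\supseteq S_{k+1}$. Writing $m_i=\min(R_i)$, I recall that an $N$-filtration is precisely a decreasing chain of subsets with strictly increasing minima; accordingly I would prove two things: (a) the inclusions $S_j\supseteq S_{j+1}$, and (b) that $S_1,\dots,S_k$ are nonempty with $\min(S_1)<\min(S_2)<\cdots$, while only $S_{k+1}$ may be empty.

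For the minima in (b), I would first record the elementary fact that $c\mapsto c^\uparrow_B$ is non-decreasing, whence $\min(\D_B(C))=(\min C)^\uparrow_B$ whenever $\D_B(C)\neq\emptyset$ (and $\D_B(C)=\emptyset$ exactly when $(\min C)^\uparrow_B$ fails to exist). Applying this, $\min(S_{i+1})=(m_i)^\uparrow_{R_{i+1}\cup x}$, which, when it exists, is strictly $>m_i$; on the other hand $\min(S_i)\le m_i$, since for $i\ge 2$ the element $m_i\in R_i\cup x$ satisfies $m_i>m_{i-1}$ and hence $(m_{i-1})^\uparrow_{R_i\cup x}\le m_i$, while $\min(S_1)=\min(R_1\cup x)\le m_1$. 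Chaining $\min(S_i)\le m_i<\min(S_{i+1})$ yields strict increase. Nonemptiness of $S_2,\dots,S_k$ follows because for $i\le k-1$ the element $m_{i+1}\in R_{i+1}$ with $m_{i+1}>m_i$ witnesses that $(m_i)^\uparrow_{R_{i+1}\cup x}$ exists; only $S_{k+1}=\D_{\{x\}}(R_k)$ can be empty, precisely when $x\le m_k$.

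The inclusion $S_1\supseteq S_2$ is immediate from $S_2\subseteq R_2\cup x\subseteq R_1\cup x=S_1$, so the substance of (a) is the interior inclusion $S_{i+1}\supseteq S_{i+2}$, that is $\D_{R_{i+1}\cup x}(R_i)\supseteq\D_{R_{i+2}\cup x}(R_{i+1})$; this is the main obstacle. I would argue by an element chase. Given $d\in S_{i+2}$, write $d=(c')^\uparrow_{R_{i+2}\cup x}$ with $c'\in R_{i+1}$, and note $d\in R_{i+2}\cup x\subseteq R_{i+1}\cup x$. Let $c$ be the predecessor of $d$ in $R_{i+1}\cup x$ (it exists, since $c'<d$ lies in this set); then by construction $d=c^\uparrow_{R_{i+1}\cup x}$, so it suffices to prove $c\in R_i$. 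As $R_{i+1}\subseteq R_i$, the only way this can fail is $c=x$; but then $c'\le c=x<d$, and if $x>c'$ the letter $x$ would be an element of $R_{i+2}\cup x$ lying strictly between $c'$ and its successor $d$ — impossible — so $x=c'\in R_{i+1}\subseteq R_i$. Hence $c\in R_i$ and $d\in S_{i+1}$, giving the inclusion.

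Finally I would assemble the pieces: by (a) and (b), the nonempty members of $S_1\supseteq\cdots\supseteq S_{k+1}$ form a decreasing chain with strictly increasing minima, i.e. an $N$-filtration, with $S_{k+1}$ discarded when empty. The delicate point throughout is the successor bookkeeping inside $\D$ when one changes the reference set from $R_{i+1}\cup x$ to $R_{i+2}\cup x$; isolating it into the single claim ``$c\neq x$ unless $x=c'$'' is what keeps the argument clean, and I expect that claim to be the one requiring the most care.
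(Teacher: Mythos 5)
Your proof is correct, and its skeleton matches the paper's: the first inclusion comes for free from $\D_{R_2\cup x}(R_1)\subseteq R_2\cup x\subseteq R_1\cup x$, the interior inclusions are an element chase around the interference of $x$, and the $N$-filtration property is the conjunction of strictly increasing minima and nonemptiness of every set but the last. The execution differs in both halves, though, and in the minima part your route is genuinely cleaner. For the interior inclusion $\D_{R_{i+2}\cup x}(R_{i+1})\subseteq\D_{R_{i+1}\cup x}(R_i)$, the paper fixes a \emph{maximal} witness $c\in R_{i+1}$ with $c^\uparrow_{R_{i+2}\cup x}=d$ and then rules out $c^\uparrow_{R_{i+1}\cup x}\neq d$ through two contradiction subcases; you instead take $c$ to be the predecessor of $d$ in $R_{i+1}\cup x$, so that $d=c^\uparrow_{R_{i+1}\cup x}$ holds automatically, and reduce everything to the single claim that $c=x$ forces $x=c'$ and hence $c\in R_{i+1}\subseteq R_i$ --- the same analysis of where $x$ can intervene, but packaged dually and with one case instead of two. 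For the minima (writing $S_1=R_1\cup x$, $S_{j+1}=\D_{R_{j+1}\cup x}(R_j)$, $m_i=\min(R_i)$ as you do), the paper argues case by case: $x\leq m_1$ versus $x>m_1$ for the first step, production of $b\in R_{i-2}$ with $c=b^\uparrow_{R_{i-1}}$ and a split on whether $x$ intervenes for the interior steps, and a separate treatment of the last set; your observation that $c\mapsto c^\uparrow_B$ is non-decreasing, whence $\min\D_B(C)=(\min C)^\uparrow_B$ and emptiness of $\D_B(C)$ is detected already at $\min C$, collapses all of this into the uniform chain $\min(S_i)\leq m_i<\min(S_{i+1})$ and settles nonemptiness of $S_2,\ldots,S_k$ in the same breath. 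What the paper's version buys is that it works from the raw definition of $\D$ alone; what yours buys is a small reusable monotonicity lemma and a shorter, essentially caseless verification of the same statement.
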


An explanation of this technical lemma may be as follows: if $R_i$ are the rows from an $N$-tableau, then one may prove that $R_i=\D_{R_i}(R_{i-1})$. Now, the deformation of the latter expression, as it appears in the lemma, gives the rows of the $N$-tableau 
corresponding to left multiplication by $x$. After this lemma, the lemma below computes these deformations.

\begin{proof} 1. Since $\D_{R_2\cup x}(R_1)$ is a subset of $R_2\cup x$, the first inclusion follows. 

2. Let $i=3,\ldots,k+1$, and $d\in \D_{R_i\cup x}(R_{i-1})$. Then there exists $c\in R_{i-1}$ such that 
$d=c^\uparrow_{R_i\cup x}\neq 1$; take $c$ maximum. Since $c\in R_{i-1}$, we have $c\in R_{i-2}$. If 
$c^\uparrow_{R_{i-1}\cup x}=d$, then $d\in \D_{R_{i-1}\cup x}(R_{i-2})$. 

Otherwise, we have either $1\neq c^\uparrow_{R_{i-1}\cup x}\neq d$, or $1=c^\uparrow_{R_{i-1}\cup x}$. In the first case, 
since $c<d$, and $d\in R_{i}\cup x$ hence $d\in R_{i-1}\cup x$, there exists $z\in R_{i-1}\cup x$ such that $c<z<d$; by maximality of $c$, we must have $z=x$, and thus 
$c<x$, hence $c^\uparrow_{R_{i}\cup x}\leq x=z<d=c^\uparrow_{R_i\cup x}$, a contradiction.

Thus we have $1=c^\uparrow_{R_{i-1}\cup x}$, which means that $c\geq \max(R_{i-1}\cup x) $; but $R_i\cup x\subseteq 
R_{i-1}\cup x$, hence $c\geq \max(R_{i}\cup x )$, hence $c^\uparrow_{R_i\cup x}=1$, a contradiction too.

From all this, the inclusion $\D_{R_i\cup x}(R_{i-1})\subseteq \D_{R_{i-1}\cup x}(R_{i-2})$ follows.

3. We show now that the sequence of minima is strictly increasing. If $x\leq \min(R_1)$, then $x=\min (R_1\cup x)$; 
therefore, for any $c\in R_1$, $c\geq x$ and $c^\uparrow_{R_2\cup x}>c\geq x$, from which follows that $
\min(\D_{R_2\cup x}(R_1))>x=\min (R_1\cup x)$. On the other hand, if $x>\min(R_1)$, then since $\D_{R_2\cup x}
(R_1)\subseteq R_2\cup x$, we have $\min(\D_{R_2\cup x}(R_1))\geq \min(R_2\cup x)>\min (R_1)$ (because $x,
\min(R_2)>\min (R_1)$)  $=\min(R_1\cup x)$.

Now, let $i=3,\ldots,k$. We show that: ($\ast$)  $\min(\D_{R_{i-1}\cup x}(R_{i-2}))< \min(\D_{R_{i}\cup x}(R_{i-1}))$. Let 
$d=\min(\D_{R_i\cup x}(R_{i-1}))$. Then there exists $c\in R_{i-1}$
such that $d= c^\uparrow_{R_i\cup x}$. It follows from the hypothesis that $R_{i-1}\subseteq R_{i-2}$ and that
$\min (R_{i-1})>\min(R_{i-2})$; thus there exists $b\in R_{i-2}$ such that $c=b^\uparrow_{R_{i-1}}$. If 
$c=b^\uparrow_{R_{i-1}\cup x}$, then $c\in \D_{R_{i-1}\cup x}(R_{i-2})$ and we deduce ($\ast$), since $c<d$. If on the 
contrary, $c\neq b^\uparrow_{R_{i-1}\cup x}$, then $b^\uparrow_{R_{i-1}\cup x}=x$ and we must have $b<x<c$ (otherwise 
by $c=b^\uparrow_{R_{i-1}}$, we have $c=b^\uparrow_{R_{i-1}\cup x}$); now $x<c<d$, hence we deduce ($\ast$) too.

If $\D_{R_{k+1}\cup x}(R_{k})$ is nonempty, then $x>\min(R_k)$, $\D_{R_{k+1}\cup x}(R_{k})=\{x\}$, and its  minimum is $x$; since $\min(R_{k-1})<\min(R_k)$, we have $\min(R_k)\in \D_{R_{k}\cup x}(R_{k-1})$, thus the minimum of this latter set is $<x$.
\end{proof}

\begin{lemma}\label{RxS} Let $\emptyset \neq R\subseteq S\subseteq A$, and $x\in A$. Let $m_R$ (resp. $m_S$) be the minimum of $R$ 
(resp. $S$) and assume that $m_S<m_R$. Define, if it exists, $y_R$ (resp. $y_S$) to be the smallest element in $R$ 
(resp. $S$) which is greater than $x$. One has:
\begin{enumerate}
\item
If $x\leq m_S$, then $\D_{R\cup x}(S)=R$. 
\end{enumerate}
Suppose now that $m_S< x$. Then one has:
\begin{enumerate}
\item[(2)] If $x\in R$ and $x\in S$, then $\D_{R\cup x}(S)=R$.
\item[(3)] If $x\notin R$ and $x\in S$, then $\D_{R\cup x}(S)=R\cup x$.
\item[(4)] If $x\notin R$, $x\notin S$, and if either 
$y_R=y_S=\emptyset$, or
$y_R=\emptyset$ and $y_S\neq \emptyset$, or
$\emptyset\neq y_R\neq y_S\neq \emptyset$, then $\D_{R\cup x}(S)=R\cup x$.
\item[(5)] If $x\notin R$, $x\notin S$, and if $\emptyset\neq y_R= y_S\neq \emptyset$, then $\D_{R\cup x}(S)=(R\cup x)\setminus y_R$.
\end{enumerate}
\end{lemma}

\begin{proof} We use several times the fact that
$R=\D_R(S)$ (which follows from $R\subseteq S$ and $m_S<m_R$).

If $x\leq m_S$, then for any $c\in S$, one has $c\geq x$, hence $c^\uparrow_{R\cup x}=c^\uparrow_R$; therefore $
\D_{R\cup x}(S)=\D_{R}(S)=R$, which proves (1). 

Assume now that $m_S< x$. We first show that in each of the cases (2) to (5), $x\in \D_{R\cup x}(S)$. Indeed, since $m_S<x$, there is some $c$ in $S$ which is $<x$, and we choose $c$ maximum; then the open interval $]c,x[$ does not intersect $S$, so does not intersect $R\cup x$ either; thus
$c^\uparrow_{R\cup x}=x$ and therefore $x\in\D_{R\cup x}(S)$.

Now let $d\in R$, with $d\neq x$ and $d\neq y_R$. We show that in each of the cases (2) to (5), $d\in \D_{R\cup x}(S)$. We have $d\in \D_R(S)$, hence there is some $c\in S$ such that $d=c^\uparrow_R$. If $x$ is not 
between $c$ and $d$, then $d=c^\uparrow_{R\cup x}$. Otherwise we have $c<x<d$, so that $y_R$ exists, and by our assumption, $d>y_R$. 
Then, since $R\subseteq S$, $y_S$ exists too, $y_S\leq y_R$, and there is some $c'$
in $S$ such that $x<c'<d$ and we choose $c'$ maximum; then $d=c'^\uparrow_{R\cup x}\in \D_{R\cup x}(S)$.

Note also that $\D_{R\cup x}(S)\subseteq R\cup x$ (because one has $\D_{B}(C)\subseteq B$), so that in 
the three cases (2) to (4), the left-hand side of the equality to be proved is contained in the right-hand side.

We now complete the proof in each case.

\begin{enumerate}
\item[(2)] We have $\D_{R\cup x}(S)=\D_{R}(S)=R$.

\item[(3)] We have $y_R=x^\uparrow_{R\cup x}\in 
\D_{R\cup x}(S)$ since $x\in S$.

\item[(4)] Note that, since $R\subseteq S$, and if $y_R$ exists, then $y_R,y_S$ both exist, and $y_S\leq y_R$. 
Thus either $y_R$ does not exist, which completes this case; or $y_R,y_S$ both exists and $y_S< y_R$ (by the assumption $y_R\neq y_S$), so that there is some $c\in S$ 
such that $x<c<y_R$, and we choose $c$ maximum; then $y_R=c^\uparrow_{R\cup x}\in 
\D_{R\cup x}(S)$.

\item[(5)]  
We show that $y_R\notin \D_{R\cup x}(S)$. Indeed, otherwise, there is some $c\in S$ such that $c<y_R$, and that $]c,y_R[$ does not intersect $R\cup x$. Then $c\neq x$ since $x\notin S$; and we cannot have $c>x$ since $y_S=y_R$. Thus we must have $c<x$, but then $]c,y_R[$ intersects $R\cup x$, a contradiction.
\end{enumerate}
\end{proof}

\begin{proof}[Proof of Theorem \ref{left-insert}] Let $T$ be an $N$-tableau with rows $R_1,\ldots,R_k$, viewed as subsets, with respective minima $p_1,\ldots,p_k$.

Suppose that we are in Case 1: $x<p_1$. Then it is apparent that in the right $N$-algorithm applied to $xRW(T)$, $x$ will appear at the first step in the first row, and the other steps will not involve $x$; hence $N(xRW(T))$ is obtained from $N(RW(T))$ by adding $x$ in the first row, and therefore $N(xRW(T))=(x\leftarrow T)$, since we are in Case 1. 

Suppose now that we are in Case 2. Then $x$ appears in the first column of $T$. Note that if $v$ is a decreasing word containing $x$, then $xv\equiv_{styl}v$, by Lemma \ref{aua}. Since $RW(T)\equiv_{styl} CW(T)$, as follows from Section \ref{plactic} and Proposition \ref{plax-styl}, we obtain that $xRW(T)\equiv_{styl} RW(T)$. Hence $N(xRW(T))=N(RW(T))=(x\to T)$.
%

We assume now that we are in Case 3. Define $u_1,\ldots,u_k$ by $\sigma(R_i)=u_i$ (the function $s$ is defined in Section \ref{delta}). Then $RW(T)=u_k\cdots u_1$. By Lemma \ref{filtrationx},
 $$S_1:=R_1\cup x\supseteq 
S_2:=\D_{R_2\cup x}(R_1)\supseteq \ldots\supseteq S_{k+1}:=\D_{R_{k+1}\cup x}(R_{k})$$
(where $R_{k+1}=\emptyset$) is an $N$-filtration, $\mathcal F$ say. It corresponds to the $N$-tableau $T'$ whose row-word is $(\prod_{i=k}^{i=1}\sigma(\D_{R_{i+1}\cup x}(R_{i})))\sigma(R_1\cup x)$. By Lemma \ref{deltaN} 
(since $\delta(x)=1$), this word is congruent modulo $\equiv_{styl}$ to $\delta(xu_k\ldots u_1)\sigma(R_1\cup x)$. This latter word is congruent to $xRW(T)$
by Lemma \ref{wdeltaSupp}.
Thus it is enough to show that for $i=1,\ldots,k+1$, $S_i=R'_i$, where the $R'_i$ 
are the rows of $x\to T$, with $R'_{k+1}$ possibly empty. We do this by following the algorithm giving $x\to T$, at the beginning of the section, and in particular using the notations there.

Since we are in Case 3, we have $x>p_1$, and either $t=k+1$, $p_k<x$ and $R_t$ empty, or $t\leq k$ and $x<p_t$. 
We have also $r< t\leq k+1$ and $p_{t-1}<x$, and if $s$ exists, then $r< s\leq t$. Next, for $i=2,\ldots,t$, we have $p_{i-1}\leq p_{t-1}<x$, so that we may apply Lemma \ref{RxS} (2), (3), (4), and (5) to $S=R_{i-1}, R=R_i$.
\begin{enumerate}
\item
If $r=0$, then $1=r+1\leq t$, hence $R'_1=R_1\cup x$; if $r\geq 1$, then $x\in R_1$, hence $R'_1=R_1=R_1\cup x$ too. Thus $S_1=R'_1$.
\item
Let $i=2,\ldots,r$; then 
$x\in R_{i-1}, x\in R_i$, so that by Lemma \ref{RxS} (2), 
$S_i=\D_{R_i\cup x}(R_{i-1})=R_i=R'_i$. 
\item
Now let $i=r+1$; then 
$x\in R_r=R_{i-1}, x\notin R_{r+1}=R_{i}$, so that by Lemma \ref{RxS} (3), $S_i=\D_{R_i\cup 
x}(R_{i-1})=R_i\cup x=R_{r+1}\cup x=R'_i$. 
\item
Suppose first that $s$ does not exist. Then for $i=r+1,\ldots,t$, $y_i$ does not exist. Let $i=r+2,\ldots,t$. Then it follows from Lemma \ref{RxS} (4) that $S_i=\D_{R_i\cup x}(R_{i-1})=R_i\cup x =R'_i$.

Suppose now that $s$ exists.
Let $i=r+2,\ldots s$; then $y_i,y_{i-1}$ exist, 
$x\notin R_{i-1}, x\notin R_i$, so that 
$S_i=\D_{R_i\cup x}(R_{i-1})=R_i\cup x $ or $(R_i\cup x)\setminus y_i$, depending on $y_i\neq y_{i-1}$ or $y_i =
y_{i-1}$ (
by Lemma \ref{RxS} (4) and (5)) $=R'_i$. 

If $i=s+1\leq t$, then $y_i$ does not exist, $y_{i-1}$ exists, $x\notin R_i$, $x\notin R_{i-1}$, so that $S_i=\D_{R_i\cup x}(R_{i-1})=R_i\cup x=R'_i$, by Lemma \ref{RxS} (4)).

Now let $i=s+2,\ldots,t$. Then 
$x\notin R_{i-1},x\notin R_i$, $y_{i-1}, y_i$ do not exist, so that $S_i=\D_{R_i\cup x}(R_{i-1})=R_i\cup x$ (by Lemma \ref{RxS} (4)); hence $S_i=R'_i$.

\item Finally, suppose that either $t=k+1, R_t$ empty and $i=k+1$, or $t+1\leq i\leq k+1$. In the first case,
$S_{k+1}=\D_{x}(R_k)=x$ (since $p_k<x$) $=R'_{k+1}$. In the second case, 
$x< p_t\leq p_{i-1}$, so that by Lemma \ref{RxS} (1), we have $S_i=\D_{R_i\cup x}(R_{i-1})=R_i=R'_i$.
\end{enumerate}
\end{proof}

\subsection{Grading of the $J$-order}

\begin{figure}
\centering
\includegraphics[scale=0.7]{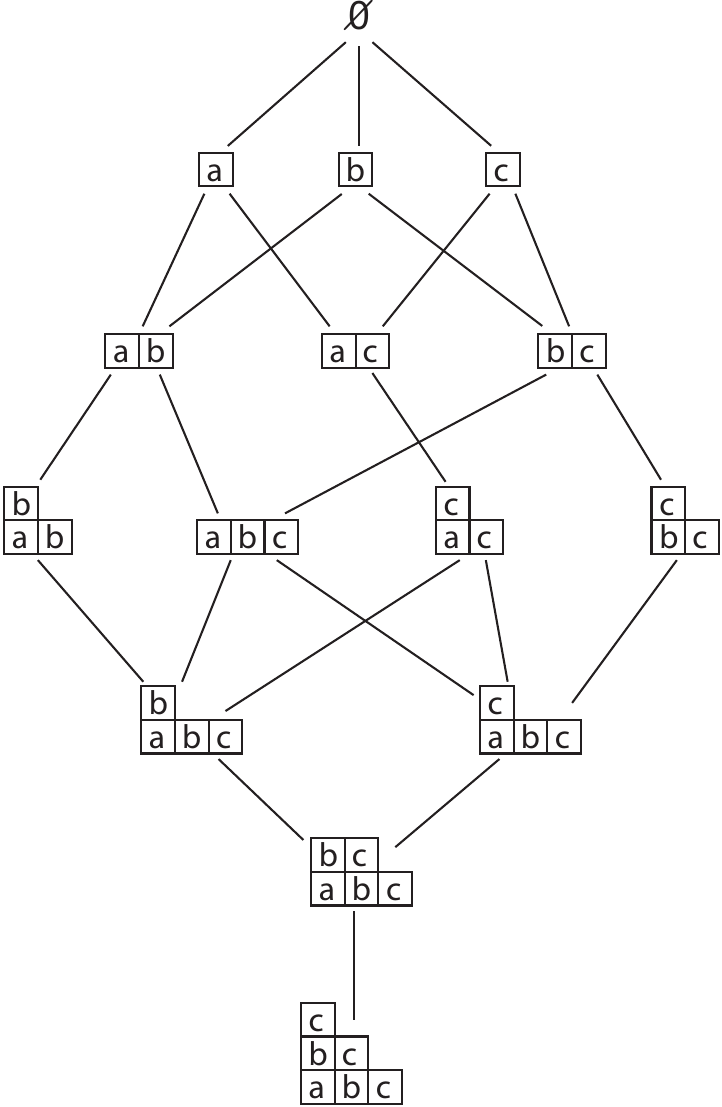} 
\caption{$J$-order for $n=3$}
\end{figure}

A finite poset $P$ is {\it graded} if there is a function $\rk:P\to \N$ such that: if $x<y$ in $P$, then $\rk(x)<\rk(y)$, and if moreover $y$ covers $x$, then $\rk(y)=\rk(x)+1$. The function $\rk$ is called the {\it rank function}. If $P$ has a minimum $\hat 0$ and a maximum $\hat 1$, we may assume that $\rk(\hat 0)=0$; let $N=\rk(\hat 1)$. We then call the function $P\to \N, x\mapsto N-\rk(x)$ the {\it co-rank function}.

\begin{theorem}\label{graded} The $J$-order in $\Styl(A)$ is graded. The co-rank of an element is given by the number of boxes in its 
$N$-tableau. 
\end{theorem}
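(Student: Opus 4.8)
The plan is to exhibit an explicit rank function and reduce the theorem to two facts about how the number of boxes behaves along the $J$-order. Write $\nu(v)$ for the number of boxes of the $N$-tableau $N(v)$ attached to $v\in\Styl(A)$ (well defined by Theorem \ref{bijection}). Since $\Styl(A)$ is $J$-trivial (Theorem \ref{J}), $\leq_J$ is a genuine partial order; its maximum is the identity $\hat 1$, whose $N$-tableau is empty so $\nu(\hat 1)=0$, and its minimum is the zero $\hat 0$. Put $N=\nu(\hat 0)$ and propose $\rk(v)=N-\nu(v)$ as rank function, so that the asserted co-rank $N-\rk(v)$ is exactly $\nu(v)$. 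By the definition of graded recalled just before the theorem, it then suffices to prove: \textbf{(A)} $u<_J v\Rightarrow \nu(u)>\nu(v)$; and \textbf{(B)} if $v$ covers $u$ then $\nu(u)=\nu(v)+1$. Note that (A) also forces $N=\nu(\hat 0)$ to be the maximum value of $\nu$, so that $\rk$ is $\N$-valued with $\rk(\hat 0)=0$.

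For (A) I would first establish the local statement: for any generator $x$ and any $v$, one has $\nu(xv)\geq\nu(v)$ and $\nu(vx)\geq\nu(v)$, with equality if and only if the element is unchanged. Right multiplication is the $N$-insertion $N(v)\leftarrow x$ (Corollary \ref{right-insert}), which only ever enlarges rows (a row $B$ becomes $B\cup x$ and a \emph{copy} of the bumped letter is sent upward), so it never deletes a box; hence $\nu$ cannot decrease and increases strictly as soon as some row grows, i.e. as soon as the element moves. Left multiplication is the insertion $x\to N(v)$ of Theorem \ref{left-insert}; reading off its algorithm (Section \ref{left}), Cases~1 and~2 change $\nu$ by $0$ or $1$, while in Case~3, step~(i) adds $x$ to the $t-r$ rows $R_{r+1},\dots,R_t$ (and $x\notin R_i$ for $i>r$), and step~(ii) deletes at most the $s-r-1\le t-r-1$ entries with $r+2\le i\le s$, for a net change at least $(t-r)-(t-r-1)=1$. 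Thus $\nu$ is monotone under multiplication by a generator, strictly so exactly when the element moves. Since $u<_J v$ means $u$ arises from $v$ by a sequence of left and right multiplications by generators, at least one of which strictly changes the element, (A) follows.

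For (B) I would first reduce covers to single multiplications. If $u<_J v$, write $u=\alpha v\beta$ and expand $\alpha,\beta$ as products of generators; this yields a chain from $v$ to $u$ each step of which is one left or right multiplication by a generator, and every term of which lies in the interval $[u,v]$. If $v$ covers $u$, all terms equal $u$ or $v$, so the first descending step gives $u=xv$ or $u=vx$ for a single generator $x$. It then remains to prove the key refinement statement: \emph{a single multiplication $v\mapsto xv$ (or $vx$) with $\nu$-jump $d\geq 2$ is never a cover}, because it can be refined into $d$ successive single-generator multiplications each raising $\nu$ by exactly $1$. Granting this, a cover with $d\geq 2$ would possess a strictly intermediate element, contradicting that it is a cover; hence every cover has $d=1$, which is (B). To build the refinement I would use Lemma \ref{Tc}, which is designed precisely to add a prescribed letter to the rows $R_{i+1},\dots,R_{k+1}$ one box at a time by successive right insertions, and peel off the extra boxes created by the large insertion one at a time while remaining inside $[u,v]$; on small alphabets one already sees that such a refinement may legitimately mix left and right multiplications.

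The main obstacle is exactly this refinement (peeling) lemma. Monotonicity (A) and the reduction of covers to single multiplications are soft, but showing that a generator multiplication with a large box-jump factors through unit steps demands a careful tableau-by-tableau analysis of which boxes are produced by Case~3 of left insertion and by $N$-insertion, and a matching of these with the one-box moves furnished by Lemma \ref{Tc}. Once the refinement is in hand, (A) and (B) combine at once into gradedness with $\rk(v)=N-\nu(v)$, whence the co-rank $N-\rk(v)=\nu(v)$ is the number of boxes of $N(v)$, proving Theorem \ref{graded}.
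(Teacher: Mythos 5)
Your plan reproduces the architecture of the paper's own proof: the same co-rank function $\nu$ (number of boxes of the $N$-tableau), strict monotonicity of $\nu$ under one-letter multiplications, reduction of covers to a single multiplication $u=xv$ or $u=vx$ via the interval argument, and reduction of gradedness to the refinement claim that a one-letter multiplication with $\nu$-jump $d\geq 2$ factors into $d$ one-letter multiplications of jump exactly $1$. Your soft steps are correct, and your direct box count for left insertion in Case 3 (exactly $t-r$ boxes added by step (i), at most $s-r-1\leq t-r-1$ removed by step (ii), net gain $\geq 1$) is even a little more self-contained than the paper's route, which deduces strictness from Proposition \ref{Young} together with the involution $\theta$. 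But the proposal stops exactly at the decisive point: the refinement claim is \emph{granted}, not proved, and you flag it yourself as ``the main obstacle.'' That claim is the entire second half of the paper's proof of Theorem \ref{graded}, so as it stands there is a genuine gap at the heart of the argument.

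Moreover, the heuristic you offer for closing it points in a direction that cannot work as stated. Since every one-letter multiplication weakly \emph{increases} $\nu$ (your own point (A)), no element of a refining chain can ever lose a box; hence one cannot ``peel off the extra boxes created by the large insertion'' after the fact --- the interpolating elements must be built from $T$ upward, never from $S=(x\to T)$ downward. Concretely, the paper interpolates by \emph{pre-composition}: when $t\leq k$ it argues by induction on $|Y(x,T)|$, taking $y_u=\max(Y(x,T)\setminus y_t)$ and verifying the two exchange identities $\nu(y_u\to T)=\nu(T)+1$ and $x\to(y_u\to T)=x\to T$; when $t=k+1$ it invokes Lemma \ref{Tc} (ii) not to dismantle $S$ but to pre-add the letter $y_s$ (or $x$ itself when $Y(x,T)=\emptyset$) one box at a time by right insertions, producing a $T'$ with $x\to T'=x\to T$ and reducing to the first case --- this is where the mixing of left and right multiplications you anticipated actually occurs. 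These identities between the insertion operators are precisely the ``careful tableau-by-tableau analysis'' you defer. Finally, note that right insertions also exhibit jumps $d\geq 2$ (a bumping chain can create several new boxes: inserting $a$ into the $N$-tableau with rows $\{a,b,c\}$ and $\{c\}$ adds two boxes), so the refinement must cover them as well; the paper disposes of this in one line using $\theta$, which exchanges left and right insertion and preserves $\nu$ --- a reduction your write-up should make explicit rather than leave implicit in the phrase ``or $vx$.''
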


Although the co-rank function is easy to describe, we do not know a direct criterion that characterizes the $J$-order directly on the $N$-tableaux.

We need some preparation. For the first result, recall that the {\it shape} of a {\it semi-standard tableau} $T$, denoted 
$\lambda(T)$, is the integer partition whose parts are the row lengths of $T$. Each integer partition is classically denoted by the decreasing sequence of its parts. The {\it Young order} on integer partitions is obtained by the rule $(a_1,a_2,\ldots)\leq_{Young} (b_1,b_2,\ldots) \Leftrightarrow \forall i, a_i\leq b_i$ (where $a_1\geq a_2\geq \ldots, b_1\geq b_2\geq \ldots$, and where sufficiently many 0's are added to the sequences).

\begin{proposition}\label{Young} Let $T$ be an $N$-tableau, $x$ a letter, and let $S=(T\leftarrow x)$ (resp. $S=(x\to T)$).  If $S\neq T$, then $\lambda(T)<_{Young}\lambda(S)$.
\end{proposition}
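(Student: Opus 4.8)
The plan is to reduce the Young-order comparison to a row-by-row comparison of row lengths, and then inspect the two insertion algorithms separately. Since $T$ and $S$ are both $N$-tableaux, their rows are nested (each row is contained in the one below), so the row lengths read from the bottom upwards are already weakly decreasing; thus $\lambda(T)$ is exactly the sequence $(|R_1|,|R_2|,\ldots)$ and $\lambda(S)$ the sequence $(|R'_1|,|R'_2|,\ldots)$, where $R_i$ (resp. $R'_i$) is the $i$-th row from the bottom of $T$ (resp. $S$), with $R'_i$ the row of $S$ naturally matched to $R_i$ by the insertion (and $|R_i|=0$ for indices beyond the rows of $T$). Consequently $\lambda(T)\leq_{Young}\lambda(S)$ is equivalent to $|R_i|\leq|R'_i|$ for every $i$, and the strict relation $\lambda(T)<_{Young}\lambda(S)$ follows as soon as, in addition, some row of $S$ is strictly longer than the matching row of $T$. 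So it suffices to prove that no row ever shrinks, and that when $S\neq T$ at least one row strictly grows.

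For right insertion $S=(T\leftarrow x)$ the situation is transparent: by definition of the $N$-insertion, a row is only ever replaced by its union with the inserted or bumped letter (the bumped letter being a \emph{copy}, the original stays in place), so $R_i\subseteq R'_i$ for every $i$, and in particular $|R_i|\leq|R'_i|$. If all these inequalities were equalities, then $R_i=R'_i$ for every $i$ and hence $S=T$; so whenever $S\neq T$ some inclusion is strict, giving $\lambda(T)<_{Young}\lambda(S)$.

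For left insertion $S=(x\to T)$ I would follow the case distinction of Section~\ref{left}. Case~2 is exactly the situation $S=T$, which is excluded, and Case~1 with $x<p_1$ merely enlarges $R_1$ by the new letter $x$ while leaving the other rows untouched, so the containment argument above applies verbatim. The real work is Case~3. Here, in the notation of the algorithm, rows $R_1,\ldots,R_r$ and the rows above $R_t$ are left unchanged; step~(i) adjoins $x$ to each of $R_{r+1},\ldots,R_t$, while step~(ii) can only remove a letter $y_i$ from a row $R_i$ with $r+2\leq i\leq t$, that is, from a row that has just received $x$ in step~(i). Hence every modified row gains $x$ and loses at most one letter, so $|R'_i|\geq|R_i|$ for all $i$: no row shrinks. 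Finally, since $r<t$, the row $R_{r+1}$ lies in the range touched by step~(i) but not by step~(ii), and $x\notin R_{r+1}$ by maximality of $r$; therefore $|R'_{r+1}|=|R_{r+1}|+1$, a strict increase. This yields $|R_i|\leq|R'_i|$ for all $i$ with strict inequality at $i=r+1$, hence $\lambda(T)<_{Young}\lambda(S)$.

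The main obstacle is precisely the bookkeeping in Case~3 of the left insertion: unlike right insertion, where rows can only grow, here a row may both gain $x$ and lose a $y_i$, so one cannot argue by inclusion and must instead track the net effect on each row length. The two points that make it go through are that every deletion in step~(ii) occurs in a row that was simultaneously augmented in step~(i), and that the lowest affected row $R_{r+1}$ escapes step~(ii) altogether and so records the guaranteed strict growth.
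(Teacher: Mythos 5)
Your proof is correct, and for the right-insertion half it coincides with the paper's argument: under $T\leftarrow x$ every row is replaced by its union with the letter inserted into it, so rows only grow, and $S\neq T$ forces at least one strict growth. Where you genuinely diverge is the left-insertion half. The paper disposes of $S=(x\to T)$ by symmetry: letting $y=\theta(x)$, the image of $x\to T$ under the anti-automorphism $\theta$ is $T\leftarrow y$ (Corollary \ref{right-insert} and Theorem \ref{left-insert}), and $\theta$ preserves the shape of the corresponding partition, so the left case reduces to the right case already proved. You instead verify the claim directly on the algorithm of Section \ref{left} — a route the paper explicitly mentions as possible but does not carry out — and your bookkeeping is accurate: Cases 1 and 2 are immediate, and in Case 3 every deletion in step (ii) happens in a row ($r+2\leq i\leq t$) that simultaneously gained $x$ in step (i), so no row length drops, while the row $R_{r+1}$ gains $x$ (which it did not contain, by maximality of $r$) and is untouched by step (ii), giving the strict increase; the observation that nestedness of rows makes the bottom-up length sequence already a partition, so that coordinatewise comparison is exactly the Young order, is the correct reduction. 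The trade-off: the paper's argument is shorter and exploits the structural symmetry $\theta$, but it leans on the evacuation machinery (shape-preservation of $\theta$ rests on Theorem \ref{evac}); your version is self-contained, needs nothing beyond the definition of left insertion, and yields finer information, namely that exactly the rows $R_{r+1},\ldots,R_t$ are modified, with $R_{r+1}$ strictly longer and the others of unchanged length.
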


It follows that the function $S\mapsto \lambda (S)$ is strictly increasing, from the set of $N$-tableaux with the $J$-order into the set of integer partitions with the Young order.

\begin{proof} Let $S=T\leftarrow x$ and suppose that $S \neq T$. It follows directly from the algorithm of right $N$-insertion that, since $S\neq T$, several rows (and at least one row) of $T$ get a new element, and the other rows remain unchanged, producing $S$. Thus $\lambda(T)<_{Young}\lambda(S)$.

Recall that $\Styl(A)$ is in bijection with partitions, and also with $N$-tableaux. It follows directly from the definition of evacuation, and of Theorem \ref{evac}, that the anti-automorphism $\theta$  of $\Styl(A)$ preserves the shape of a partition, hence of the corresponding $N$-tableau. Let $y=\theta(x)$. Then the image under $\theta$ of $x\to T$ is $T\leftarrow y$, by Corollary \ref{right-insert} and Theorem \ref{left-insert}. Thus the second assertion follows from the first (it may also be seen directly on the left insertion).
\end{proof}

\begin{lemma}\label{Tc} Let $T$ be an $N$-tableau, with rows $R_1,\ldots,R_k$, and let $R_{k+1}$ denote the empty row.

(i) Let $c$ be a letter in $R_i$, with $c\neq \min(R_i)$. There exists a letter $x\in \Supp(T)$ such that the letter $c$ is bumped from $R_i$ during the right $N$-insertion $T\leftarrow x$, and no letter is added in rows $R_1,\ldots,R_i$. 

(ii) Let $c\in R_i$, such that either $c>\max(R_{i+1})$, or $i=k$ and $c\neq \min(R_k)$. Then there exist letters $x_1,\ldots x_{k+1-i}\in \Supp(T)$ such that the $k+1-i$ successive right $N$-insertions $(\ldots(T\leftarrow x_1)\cdots )\leftarrow x_{k+1-i}$ produce an $N$-tableau which is obtained from $T$ by adding $c$ in rows $R_{i+1},\ldots,R_{k+1}$, with one box labelled $c$ added at each insertion.
\end{lemma}

\begin{proof} (i) (induction on $i$)  If $i=1$, then $c\neq \min(R_1)$ implies that some letter $x<c$ is in $R_1$; we choose $x$ maximum and then in the right $N$-insertion $T\leftarrow x$, $c$ is bumped from $R_1$; the second condition holds since $x\in R_1$.

Suppose now that $i\geq 2$; since $\min(R_i)<c$, there is some letter $b<c$ in $R_i$ and we choose $b$ maximum; then the right $N$-insertion of $b$ into the row $R_{i}$ bumps $c$ from $R_i$.
Since $T$ is an $N$-tableau, $\min(R_{i-1})<\min(R_i)\leq b$ and $b\in R_{i-1}$. Thus, by induction on $i$, there exists a letter $x\in \Supp(T)$ such that during the right $N$-insertion $T\leftarrow x$, $b$ is bumped from $R_{i-1}$, and no letter is added in rows $R_1,\ldots,R_{i-1}$. It follows that during this insertion, $c$ is bumped from $R_i$. Moreover, no letter is added in $R_{i}$, since $b\in R_{i}$.

(ii) (induction on $k+1-i$) If $i=k$, then by (i) there exists $x_1\in\Supp(T)$ such that the right $N$-insertion $T\leftarrow x_1$ bumps $c$ from $R_k$, producing a new row $\{c\}$, and no letter is added in rows $R_1,\ldots,R_{k}$.

Suppose now that $i<k$. Then ($\ast$)  $c>\max(R_{i+1})\geq \min(R_{i+1})>\min(R_i)$, so that by (i) there exists $x_1\in \Supp(T)$ 
such that $T\leftarrow x_1$ adds $c$ into row $R_{i+1}$, and since $c>\max(R_{i+1})$, the algorithm stops; denote by $T'$ 
the resulting tableau, with rows $R'_j$. We have $c\in R'_{i+1}$ and either: $i+1<k$, $c>\max(R_{i+1})\geq \max(R_{i+2})
$ (since $R_{i+1}$ contains $R_{i+2}$) $=\max(R'_{i+2})$ (since $R_{i+2}=R'_{i+2}$); or: $i+1=k$ and $c\neq \min(R_k)$
as follows from ($\ast$).

It follows by induction that there exist $x_{2},\ldots,x_{k+1-i}\in\Supp(T')=\Supp(T)$ such that $(\ldots(T'\leftarrow x_2)\cdots)\leftarrow x_{k+1-i}$ adds $c$ in rows $R'_{i+2},\ldots,R'_{k+1}$, and such that each insertion adds exactly one box. This ends the proof.
%
\end{proof}

\begin{proof}[Proof of Theorem \ref{graded}] For $w\in A^*$, let $\nu(w)$ denote the number of boxes in the $N$-tableau $N(w)$; this function is compatible with the stylic congruence, hence induces a function $\nu$ on $\Styl(A)$; we show that it is the co-rank function on the poset $\Styl(A)$ with the $J$-order.

Since $\Styl(A)$ is generated by the letters, the $J$-order is defined by the following rule: for $u,v\in \Styl(A)$, $u\leq_J v$ if and only if there exist elements $u_0,\ldots,u_n$ in $\Styl(A)$ such that $u_0=u$, $u_n=v$ and for each $i=0,\ldots,n-1$, there exists a letter $x$ such that $u_i=xu_{i-1}$ or $u_i=u_{i-1}x$. Switching to $N$-tableaux, identified with elements of $\Styl(A)$, this translates into: for any $N$-tableaux $S,T$, one has $T\leq_J S$ if and only if there exist $N$-tableaux $T_0,\ldots,T_n$ such that $T_0=T$, $T_n=S$ and for each $i=0,\ldots,n-1$, there exists a letter $x$ such that $T_i=x\to T_{i-1}$ or $T_i=T_{i-1}\leftarrow x$.

We therefore deduce from Proposition \ref{Young} that for $u,v\in \Styl(A)$ such that $u<_J v$, one has $\nu(u)<\nu(v)$.

It remains to show the following result: if for two $N$-tableaux $S,T$, $T\leq_J S$, then there exists a sequence of $N$-tableaux $T_0,\ldots,T_n$ such that $T_0=T$, $T_n=S$ and for each $i=0,\ldots,n-1$, $T_{i-1}<_J T_i$ and $\nu(T_i)=\nu(T_{i-1})+1$. It is enough to prove this when $S$ is obtained from $T$ by a left or a right insertion of a letter, and even when it is a left insertion (since the anti-automorphism $\theta$ exchanges left and right insertions, and preserves the shape, hence also preserves $\nu$). We show that this left insertion $x\to T$ is equivalent to a sequence of left or right insertions, each adding one box to the shape. We argue by induction on $\nu(S)-\nu(T)$, noting that if this quantity is 0, then $S=T$.

So let $S=(x\to T)$ for some letter $x$. Referring to the definition of left insertion at the beginning of Section \ref{left}, we see that Cases 1 and 2 give immediately the result. So we may assume that we are in Case 3. There are two cases to consider: $t=k+1$ (subcase 3.1), and $t\leq k$ (subcase 3.2).

1. Suppose that $t=k+1$; then $R_{k+1}=\emptyset$ and $\min(R_k)=p_k<x$. 

a) Suppose that $Y(x,T)$ is empty. 
Then $S$ is obtained from $T$ by adding $x$ in each row $R_{r+1},\ldots, R_{k+1}$. If $r\geq 1$, we use Lemma \ref{Tc} (ii), 
with $c=x, i=r$: the left insertion $x\to T$ may be simulated by $k+1-r$ right insertions of $x$, each one increasing the 
number of boxes by 1. If $r=0$, then the hypothesis $Y(x,T)=\emptyset$ implies that $x$ is greater than each element in $T$; then $T\to x$ 
adds $x$ in the first row, and nothing else, and we are reduced to $r\geq 1$.

b) Suppose that $Y(x,T)$ is nonempty. Then $y_s\in R_s$ and: either $s<k$, and then $y_s>x\geq \max(R_{s+1})$ (the latter 
inequality since $y_{s+1}$ does not exist), and therefore $y_s>\max(R_{s+1})$; 
or $s=k$ and 
$y_s\neq \min(R_k)$ (since $y_s>x>p_{k}$). Then by Lemma \ref{Tc} (ii) (applied to $c=y_s$, $i=s$), we find a sequence of right insertions, each one of which 
adds a single box, and whose result is the $N$-tableau $T'$ obtained from $T$ by adding $y_s$ in rows $R_{s+1},
\ldots,R_{k+1}$ (and in particular the $(k+1)$-th row of $T'$ is $\{y_s\}$). Then $\nu(T')>\nu(T)$, since $s+1\leq k+1$. Now $x\to T'$ adds $x$ in rows $R_{r+1},
\ldots,R_t$ and removes the $y_s$ that were just added, together with each $y_i$ in rows $R_{r+2},\ldots,R_s$ if $y_i=y_{i-1}$; 
thus $(x\to T')=S$. We conclude by induction, since $\nu(S)-\nu(T')$ is smaller than $\nu(S)-\nu(T)$.

2. Suppose that $t\leq k$. Then $s=t$, $y_{r+1},\ldots, y_t$ exist and $y_t=p_t$; moreover  $Y(x,T)=\{y_{r+1},\ldots, 
y_t\}$. 

a) Suppose that the set $Y$ has only one element, which is $y_t$. Then (i) adds $x$ in rows $R_{r+1},\ldots, R_t$ and (ii) removes $y_t$ from the rows $R_{r+2},\ldots, R_t$; hence $\nu(S)=\nu(T)+1$ and we are done.

b) Suppose that the set $Y$ has at least two elements, and let $y_u=\max(Y\setminus y_t)$, with $u$ chosen maximum. Let $T'=(y_u\to T)$. We have $p_{t-1}<x<y_u<y_t=p_t$. Hence the left insertion $y_u\to 
T$ adds $y_u$ in rows $R_{u+1},\ldots, R_t$ and removes $y_t$ from the rows $R_{u+2},\ldots R_t$, and in particular $
\nu(T')=\nu(T)+1$. Now, for the left insertion $x\to T'$, we have $Y(x,T')=Y(x,T)\setminus y_t$. Moreover, the left insertion 
$x\to T'$: (i) adds $x$ in rows $R_{r+1},\ldots,R_{t}$; and (ii) removes $y_u$ from rows $R_{u+1},\ldots,R_t$, and from the 
rows $R_i$, $i=r+2,\ldots,u$, it removes $y_i$ if $y_i=y_{i-1}$. Thus $(x\to T')=S$, which 
settles this case by induction, because $\nu(T')=\nu(T)+1$.

\end{proof}

%
%
%

\section{Fixpoints and idempotents}

Recall that we may view columns as subsets of $A$. As such, they are ordered by inclusion.

\begin{theorem}\label{fixed} (i) Let $w\in A^*$. A column $\gamma$ is fixed by $w$ if and only if $\Supp(w)\subseteq\gamma$.

(ii)  The support of a word is the smallest fixpoint, in the inclusion order, of its action on the columns. 

(iii) The idempotents in $\Styl(A)$ are the images under $\mu$ of the strictly decreasing words. 

(iv) There are $2^{|A|}$ idempotents in $\Styl(A)$.
\end{theorem}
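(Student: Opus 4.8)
The plan is to prove (i) first, deduce (ii) immediately, and then use (i) to obtain both directions of (iii) together with the count (iv).

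\textbf{Parts (i) and (ii).} The implication $\Supp(w)\subset\gamma \Rightarrow w\cdot\gamma=\gamma$ is exactly Corollary \ref{recursive-action}(i). For the converse I would argue by induction on $|w|$. Writing $w=aw'$ and setting $\gamma''=w'\cdot\gamma$, the left-action identity gives $w\cdot\gamma=a\cdot\gamma''$; since $a\cdot\gamma''$ contains $a$ by Lemma \ref{idemp}(o), the hypothesis $w\cdot\gamma=\gamma$ forces $a\in\gamma$. To recover $w'\cdot\gamma=\gamma$ and close the induction I would invoke Proposition \ref{increasing}(i) twice: $\gamma\ge w'\cdot\gamma=\gamma''$ and $\gamma''\ge a\cdot\gamma''=\gamma$, so antisymmetry of the column order yields $\gamma=\gamma''$. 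Then $\Supp(w')\subset\gamma$ by induction, and with $a\in\gamma$ this gives $\Supp(w)\subset\gamma$. Part (ii) is then immediate: taking $\gamma=\Supp(w)$ in (i) shows $\Supp(w)$ is a fixpoint, and (i) shows every fixpoint contains it, so it is the $\subset$-smallest one.

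\textbf{Columns give idempotents (the easy half of (iii)).} For a strictly decreasing word $c$ I want $cc\equiv_{styl}c$, that is $c\cdot(c\cdot\beta)=c\cdot\beta$ for every column $\beta$. By (i) this reduces to the claim $\Supp(c)\subset c\cdot\beta$. I would prove this by induction on the length of $c=c_1\cdots c_m$ with $c_1>\cdots>c_m$: writing $\beta'=(c_2\cdots c_m)\cdot\beta$, induction gives $\{c_2,\dots,c_m\}\subset\beta'$, and inserting $c_1$ keeps $c_1$ (Lemma \ref{idemp}(o)) and preserves all letters $<c_1$, in particular $c_2,\dots,c_m$ (Lemma \ref{idemp}(ii)). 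Hence $\Supp(c)\subset c_1\cdot\beta'=c\cdot\beta$, giving idempotency of $\mu(c)$.

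\textbf{The key tool, the converse of (iii), and (iv).} The crux is a characterization of the action of an idempotent: for an idempotent $e$ with $\Supp(e)=\gamma_0$ and any column $\beta$, I claim that $e\cdot\beta$ is the \emph{largest} fixpoint of $e$ lying $\le\beta$ in the column order. Indeed $e\cdot\beta$ is itself a fixpoint (as $e^2=e$) and satisfies $e\cdot\beta\le\beta$ by Proposition \ref{increasing}(i); moreover any fixpoint $\delta\le\beta$ satisfies $\delta=e\cdot\delta\le e\cdot\beta$ by the monotonicity of Proposition \ref{increasing}(ii). Since by (i) the fixpoints of $e$ are exactly the columns containing $\gamma_0$, the element $e\cdot\beta$ is the maximum of $\{\delta:\gamma_0\subset\delta,\ \delta\le\beta\}$, a set depending only on $\gamma_0$ and $\beta$. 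This is the step I expect to be the main obstacle, as it is what converts the order-theoretic data into rigidity. Granting it, two idempotents with the same support act identically on every column and are therefore equal, so the support map on idempotents is injective; it is also surjective, because the easy half of (iii) produces, for each subset $\gamma_0\subset A$, an idempotent $\mu(\gamma_0)$ of support $\gamma_0$. Hence every idempotent equals $\mu$ of the column given by its support, which is (iii), and the support map is a bijection between idempotents and subsets of $A$, giving exactly $2^{|A|}$ idempotents, which is (iv). (Injectivity of $c\mapsto\mu(c)$ on columns also follows directly from Lemma \ref{support}.)
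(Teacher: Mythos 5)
Your proposal is correct, and parts (i), (ii) and the easy half of (iii) essentially follow the paper's lines: the forward direction of (i) is Corollary \ref{recursive-action}(i) in both; your induction for the converse of (i) (peel the leftmost letter, get $a\in\gamma$ from Lemma \ref{idemp}(o), then squeeze $\gamma\geq w'\cdot\gamma\geq\gamma$ by Proposition \ref{increasing}(i) and antisymmetry) is a mild variant of the paper's argument, which instead factors $w=uav$ at the rightmost letter $a\notin\gamma$ and derives the strict inequality $w\cdot\gamma<\gamma$; both hinge on Proposition \ref{increasing}(i). Your induction showing $\Supp(c)\subset c\cdot\beta$ for a strictly decreasing $c$ is exactly the paper's Lemma \ref{column.column}.

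Where you genuinely diverge is the uniqueness half of (iii). The paper argues semigroup-theoretically: if $e$ and $f=\mu(w)$ are idempotents with the same support, they have the same fixpoint set by (i), the image of each is contained in that set, whence $ef=f$ and $fe=e$; so $e$ and $f$ are $J$-equivalent and therefore equal by $J$-triviality (Theorem \ref{J}). You instead prove an order-theoretic rigidity statement: for an idempotent $e$, the column $e\cdot\beta$ is the \emph{greatest} fixpoint of $e$ that is $\leq\beta$, using Proposition \ref{increasing}(i) for $e\cdot\beta\leq\beta$ and (ii) for $\delta=e\cdot\delta\leq e\cdot\beta$ whenever $\delta$ is a fixpoint with $\delta\leq\beta$; since by (i) the fixpoint set is determined by $\Supp(e)$, the action of $e$ is determined by its support, and injectivity (hence (iii) and (iv)) follows. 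This key tool is sound — note in particular that the set in question is automatically nonempty since $e\cdot\beta$ itself lies in it, and greatest elements of a poset are unique. Amusingly, your tool is precisely what the paper proves immediately afterwards as Proposition \ref{intervals}(i)--(ii) for strictly decreasing words, by the same two appeals to Proposition \ref{increasing}. The trade-off: your route is self-contained at this point of the paper (it needs only Proposition \ref{increasing} and part (i), not Theorem \ref{J}), while the paper's route is shorter given that $J$-triviality is already established, and it isolates the standard monoid-theoretic fact that two $J$-equivalent idempotents in a $J$-trivial monoid coincide. Either way the count (iv) comes out the same, via the bijection between idempotents and subsets of $A$.
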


\begin{lemma}\label{column.column} Let $w$ be a strictly decreasing word and $\gamma $ a column. Then $\Supp(w)\subseteq \Supp(w\cdot \gamma)$.
\end{lemma}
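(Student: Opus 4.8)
The plan is to follow the insertion of $w$ into $\gamma$ letter by letter and to show that no letter, once placed, is ever bumped out. Write $w=a_1\cdots a_n$ with $a_1>a_2>\cdots>a_n$. Since $\cdot$ is a left action and each single-letter action is a column insertion, $w\cdot\gamma=a_1\cdot(a_2\cdot(\cdots(a_n\cdot\gamma)))$, so the letters are inserted from right to left, that is, in the \emph{increasing} order $a_n<a_{n-1}<\cdots<a_1$. For $1\le k\le n$ set $\gamma_k=a_k\cdot(a_{k+1}\cdots a_n\cdot\gamma)$, the column obtained after inserting $a_k,\dots,a_n$, and put $\gamma_{n+1}=\gamma$.

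I would then prove, by downward induction on $k$, the claim $\{a_k,a_{k+1},\dots,a_n\}\subset\gamma_k$. The base case $k=n$ is immediate: $a_n\in a_n\cdot\gamma=\gamma_n$ by Lemma \ref{idemp}(o). For the inductive step, assume $\{a_{k+1},\dots,a_n\}\subset\gamma_{k+1}$. Because $w$ is strictly decreasing, $a_k>a_i$ for every $i>k$, so each of $a_{k+1},\dots,a_n$ lies in $(\gamma_{k+1})_{a_k}$, the set of elements of $\gamma_{k+1}$ strictly smaller than $a_k$. By Lemma \ref{idemp}(ii), $(a_k\cdot\gamma_{k+1})_{a_k}=(\gamma_{k+1})_{a_k}$, so these letters are unaffected by the insertion of $a_k$ and remain in $\gamma_k$; moreover $a_k\in\gamma_k$ by Lemma \ref{idemp}(o). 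Hence $\{a_k,\dots,a_n\}\subset\gamma_k$, completing the induction. Taking $k=1$ gives $\Supp(w)=\{a_1,\dots,a_n\}\subset\gamma_1=w\cdot\gamma$, i.e. $\Supp(w)\subset\Supp(w\cdot\gamma)$, which is the assertion.

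The one point that genuinely uses the hypothesis that $w$ is strictly decreasing -- and the only place where care is needed -- is the control of the bumped letter. When $a_k$ is inserted, the letter it may bump is by definition $\ge a_k$, hence strictly larger than every letter inserted before it; this is exactly what is packaged in Lemma \ref{idemp}(ii) applied at the threshold $a_k$. No further obstacle arises: the whole argument reduces to inserting the letters in increasing order and invoking parts (o) and (ii) of Lemma \ref{idemp} at each step.
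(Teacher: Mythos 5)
Your proof is correct and is essentially the paper's own argument: the paper proves the same statement by induction on the length of $w$, peeling off the largest letter $a$ from $w=av$ and invoking Lemma \ref{idemp} (o) and (ii) at the threshold $a$, which is exactly your inductive step with the induction merely unrolled over the insertion positions. No gap; only note that your index notation $\gamma_k$ collides with the paper's threshold notation $\gamma_x$, so a different symbol would be safer.
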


\begin{proof} We show this by induction on the length of $w$. 
It is clear if $w$ is empty. Otherwise $w=av$, with $v$ strictly decreasing. By induction $\Supp(v)\subseteq v\cdot \gamma$. 
Since $a$ is greater than any letter in $v$, we also have $\Supp(v)\subseteq (v\cdot \gamma)_a$.
We have $(a\cdot(v\cdot\gamma))_a=(v\cdot\gamma)_a$ by Lemma \ref{idemp} (ii). Hence $\Supp(v)\subseteq (a\cdot(v\cdot\gamma))_a
\subseteq (av)\cdot \gamma=w\cdot\gamma$, and since $a\in a\cdot(v\cdot \gamma)=w\cdot \gamma$, we deduce $\Supp(w)\subseteq w\cdot \gamma$, as was 
to be shown.
\end{proof}

\begin{proof}[Proof of Theorem \ref{fixed}] (i) If $w$ is a word such that $\Supp(w)\subseteq \gamma$, $w$ fixes $\gamma$ by Lemma \ref{idemp} (i) applied iteratively. 

Conversely, let $\gamma$ be a column fixed by $w$. 
If we had $w=uav$ with $a \notin \gamma$, choose $v$ shortest possible; then $\Supp(v)\subseteq \gamma$, thus by Lemma \ref{idemp} (i), $v\cdot \gamma=\gamma$. We have 
$a\cdot \gamma \neq \gamma$, and by Proposition \ref{increasing} (i), $a\cdot \gamma < \gamma$. Therefore $(av)
\cdot\gamma<\gamma$ and finally $w\cdot \gamma =u\cdot ((av)\cdot\gamma)\leq (av)\cdot\gamma$ (by the same proposition)
$< \gamma$, and we cannot have $w\cdot \gamma=\gamma$, a contradiction.

(ii) Clear by (i).

(iii) Let $w$ be a strictly decreasing word. Then we already know that the fixpoints of $w$ are the columns containing $\Supp(w)$. 

Let $\gamma$ be any column. 
It follows from Lemma \ref{column.column} and (i), that $w\cdot\gamma$ is a fixpoint of $w$. Hence $w\cdot(w\cdot \gamma)=w\cdot \gamma$, and $w$ acts as idempotent on the columns.

It remains to prove the converse: each idempotent $e$ in $\Styl(A)$ is equivalent modulo $\equiv_{styl}$ to a strictly 
decreasing word. For this, let $w$ the strictly decreasing word whose letters are the elements in $\Supp(e)$. Then by (i) 
$e$ and $f=\mu(w)$ have the same set of fixpoints; moreover, $e,f$ are idempotent, hence their images are
contained in this set. It follows by monoid theoretical arguments that $\mu(w)=e$: indeed, for any $\gamma$, $f\cdot 
\gamma$ is in the image of $f$, hence is a fixpoint of $e$; hence $ef\cdot \gamma=f\cdot \gamma$; thus $ef=f$; similarly, 
$fe=e$; hence $e,f$ are $J$-equivalent, hence equal since $\Styl(A)$ is $J$-trivial (Theorem \ref{J}).

(iv) is clear, since the idempotents are in bijection with subsets of $A$, because two different subsets, viewed as strictly decreasing words, act differently on the empty column. 
\end{proof}

\subsection{Applications to the plactic monoid: a confluent rewriting system}


In the next result, columns are also viewed as decreasing words, and as subsets of $A$.

\begin{proposition}\label{gammadelta} Let $\gamma,\delta$ be columns. Let $\gamma'=\gamma\cdot \delta$ and 
$\delta'=(\gamma\cup\delta)\setminus (\gamma\cdot\delta)$, where this boolean operation is taken as multisets. Then

(i) $\gamma\subseteq \gamma'$;

(ii) $\gamma'\leq \gamma$;

(iii) $\gamma'=\gamma$ if and only $\delta=\delta'$ if and only if $\gamma\leq \delta$;

(iv) $\gamma'\leq \delta'$;

(v) $\gamma \delta\equiv_{plax}\gamma'\delta'$.
\end{proposition}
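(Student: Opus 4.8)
The plan is to prove everything inside the single two-column tableau $T=P(\gamma\delta)$. First I would record the structure of $T$. Since $\gamma$ and $\delta$ are strictly decreasing, any weakly increasing subword of the word $\gamma\delta$ takes at most one letter from each, so by Schensted's theorem (Section \ref{insert}) $T$ has at most two columns. By Proposition \ref{first} its first column is $\gamma\cdot\delta=\gamma'$. As $P$ preserves the multiset of letters (the Knuth relations only permute letters), the content of $T$ equals $\gamma\cup\delta$ as a multiset; hence the second column of $T$ is exactly $(\gamma\cup\delta)\setminus\gamma'=\delta'$, with the degenerate one-column case giving $\delta'=1$ and $\gamma'\le 1=\delta'$ by the convention on the empty column. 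This identification already yields two of the claims. For (v), the facts of Section \ref{plactic} give $\gamma\delta\equiv_{plax}r(T)\equiv_{plax}c(T)$, and reading the two columns of $T$ from left to right is precisely $c(T)=\gamma'\delta'$, so $\gamma\delta\equiv_{plax}\gamma'\delta'$. For (iv), $T$ is a valid tableau whose left and right columns are $\gamma'$ and $\delta'$, which is by definition what $\gamma'\le\delta'$ asserts.

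For (i) I would follow the column insertion of the word $\gamma=g_1\cdots g_p$ (so $g_1>\cdots>g_p$) into the column $\delta$. By the definition of column insertion the letters are inserted in increasing order $g_p,g_{p-1},\ldots,g_1$, and at each step the new first column is obtained from the current first column $C$ by the single-column insertion $g_i\cdot C$. I claim by downward induction on $i$ that the current first column contains $\{g_i,\ldots,g_p\}$: Lemma \ref{idemp}(o) puts $g_i$ into $g_i\cdot C$, while $g_{i+1},\ldots,g_p$ are all smaller than $g_i$ and are therefore retained by Lemma \ref{idemp}(ii). Taking $i=1$ gives $\gamma\subseteq\gamma'$, which is (i).

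Then (ii) is immediate from (i): since $\gamma\subseteq\gamma'$, the inclusion $\gamma\hookrightarrow\gamma'$, $x\mapsto x$, is a regressive injection from $\gamma$ into $\gamma'$, so by the regressive-injection description of the column order (Section \ref{order}) we get $\gamma'\le\gamma$. For (iii), the three conditions $\gamma'=\gamma$, $\delta'=\delta$ and $\gamma\le\delta$ are equivalent. The equivalence $\gamma'=\gamma\iff\delta'=\delta$ is pure multiset bookkeeping from $\gamma'\cup\delta'=\gamma\cup\delta$. If $\gamma\le\delta$ then the array with left column $\gamma$ and right column $\delta$ is already a tableau, so $P(\gamma\delta)$ equals it and $\gamma'=\gamma$; conversely, $\gamma'=\gamma$ (hence $\delta'=\delta$) makes $T$ a valid tableau with columns $\gamma$ and $\delta$, which is exactly the definition of $\gamma\le\delta$.

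The main obstacle is really the first step: pinning down that $T$ has precisely the two columns $\gamma'$ and $\delta'$ in the multiset sense, and correctly handling the degenerate single-column case. Once the geometry of $T$ is fixed, (iv) and (v) are formal, and the only genuinely combinatorial input is the short descending induction for (i), which Lemma \ref{idemp} makes routine; (ii) and (iii) then follow from (i) and the definition of the column order.
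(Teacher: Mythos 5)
Your proof is correct and takes essentially the same route as the paper: both identify $T=P(\gamma\delta)$ as the (at most) two-column tableau whose columns are $\gamma'$ and $\delta'$, deduce (iii)--(v) from that picture, and prove (i) by the same ascending-insertion induction via Lemma \ref{idemp} (o) and (ii) --- which is precisely the paper's Lemma \ref{column.column}, cited there rather than reproved. The only cosmetic differences are your appeal to Schensted's theorem for the two-column bound (the paper just reads it off the insertion mechanics) and your explicit regressive injection in (ii), where the paper invokes the remark in Section \ref{order} that the column order extends reverse inclusion.
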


Note that $\delta'$ may be the empty column, in which case it is the empty word, according to our conventions.

\begin{proof} (o) Consider the tableau $T$ obtained by column insertion of the word $\gamma$ into the column $\delta$: its first column is $\gamma'=\gamma\cdot \delta$, by definition of the action on columns, and $T$ has either only one column, or two columns, and by counting letters, the second one must be $\delta'$; in particular, $\gamma'\leq \delta'$ by definition of order. Moreover $T=P(\gamma\delta)$. In particular, if $\gamma\leq \delta$, $T$ is the tableau with first column $\gamma=\gamma'$ and second column $\delta$.

(i) This is Lemma \ref{column.column}.

(ii) This follows from (i) and an observation in Section \ref{order}, relating inclusion of columns, and their order.

(iii) The first equivalence follows from the multiset union $\gamma\cup \delta=\gamma'\cup\delta'$. If $\gamma\leq \delta$, 
then $\gamma'=\gamma$ by (o). Conversely, if $\gamma'=\gamma$, then $\delta'=\delta$, and we obtain by (o)  that $
\gamma'\leq \delta'$, hence $\gamma\leq \delta$.

(iv) and (v) follow from (o) and Section \ref{insert}.
\end{proof}

\begin{theorem}\label{quadr-pres} (\cite[Theorem 4.5]{BCCL}, \cite[ Theorem 3.4]{CGM}) 

(i) The plactic monoid has the following presentation: 
it is generated by the columns, subject to the relations $\gamma\delta
=\gamma'\delta'$, for all columns $\gamma,\delta$, where $\gamma',\delta'$ are defined in Proposition \ref{gammadelta}, 
with $\delta'$ omitted if it is the empty column. 

(ii) The rewriting system on the free monoid $\C(A)^*$ given by the rules $\gamma\delta\to\gamma'\delta'$, with the same 
notations, and where one omits the rules with $\gamma\leq\delta$, is confluent. 
\end{theorem}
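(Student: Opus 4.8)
The plan is to treat (i) and (ii) together by extracting from the rules a \emph{convergent} (terminating and confluent) rewriting system, and then reading off the presentation. Consider the canonical surjective monoid morphism $\phi\colon \C(A)^*\to\Plax(A)$ that sends a word of columns $\gamma_1\cdots\gamma_m$ to the plactic element $P(\gamma_1\cdots\gamma_m)$ obtained by concatenating the column readings; it is surjective because every tableau $T$ equals $P(c(T))$, i.e. the product of its columns. By Proposition \ref{gammadelta}(v) the defining relations hold in $\Plax(A)$, so $\phi$ factors through the congruence $\sim$ generated by the rules, yielding $\bar\phi\colon \C(A)^*/{\sim}\to\Plax(A)$. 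Statement (i) is exactly the injectivity of $\bar\phi$, and I will obtain it, together with (ii), from a normal-form analysis. Write $T(w)=\phi(w)$ for the tableau attached to $w$; by Proposition \ref{gammadelta}(v) the value $T(w)$ is invariant under the in-context rewriting $\to$.

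First I would identify the irreducible words. A factor $\gamma\delta$ carries no applicable rule exactly when $\gamma\leq\delta$: the rules with $\gamma\leq\delta$ are omitted, and when $\gamma\not\leq\delta$ one has $\gamma'\neq\gamma$ by Proposition \ref{gammadelta}(iii), so the rule is genuinely reducing. Hence a word $\gamma_1\cdots\gamma_m$ is irreducible precisely when $\gamma_1\leq\cdots\leq\gamma_m$, i.e. when the columns fit side by side into a tableau $S$. For such a word one has $w=c(S)$ and $T(w)=P(c(S))=S$ by Section \ref{plactic}; thus the irreducible words are exactly the column words $c(S)$ of tableaux $S$, and $w\mapsto T(w)$ restricts to a bijection between irreducible words and tableaux.

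Next I would prove termination using the measure $M(\gamma_1\cdots\gamma_m)=\sum_{i} i\,|\gamma_i|$, where $|\gamma_i|$ is the number of boxes of $\gamma_i$. When the rule $\gamma_i\gamma_{i+1}\to\gamma_i'\gamma_{i+1}'$ is applied, Proposition \ref{gammadelta}(i),(iii) give $\gamma_i\subseteq\gamma_i'$ with $\gamma_i\neq\gamma_i'$, so $|\gamma_i'|=|\gamma_i|+k$ and $|\gamma_{i+1}'|=|\gamma_{i+1}|-k$ with $k\geq 1$ (the total content is preserved). If $\gamma_{i+1}'$ is nonempty the length vector changes only in positions $i,i+1$ and $M$ drops by exactly $k$; if $\gamma_{i+1}'$ is empty the $(i+1)$-st column is deleted and all later columns shift left, which makes $M$ drop as well. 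Since $M$ is a nonnegative integer that strictly decreases at every step, the system terminates.

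Finally I would assemble the pieces. By termination every $w$ reduces to some irreducible word $w_0$; by invariance $T(w_0)=T(w)$, and by the second paragraph $w_0=c(T(w_0))=c(T(w))$. Thus every reduction sequence issuing from $w$ terminates at the single word $c(T(w))$, so the system has unique normal forms and is therefore confluent, proving (ii). Having established convergence, $u\sim v$ holds if and only if $u$ and $v$ have the same normal form, i.e. $c(T(u))=c(T(v))$, i.e. $T(u)=T(v)$, i.e. $\phi(u)=\phi(v)$; hence $\bar\phi$ is injective and (i) follows. The main obstacle I anticipate is conceptual rather than computational: instead of resolving critical pairs coming from three-column overlaps $\alpha\beta\eta$ (which would be laborious), the proof leverages the plactic invariant $T$ together with the bijection between tableaux and irreducible words to force uniqueness of normal forms directly; the one genuinely technical point is the choice of termination measure and the verification of the column-deletion case, which the argument above settles.
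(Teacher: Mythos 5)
Your proof is correct and follows essentially the same route as the paper's: both sidestep critical-pair analysis by observing that the irreducible words are exactly the weakly increasing column sequences, i.e.\ column words of tableaux, which form a set of plactic representatives, with Proposition \ref{gammadelta}(v) supplying invariance under rewriting, so that unique normal forms (hence confluence and the presentation) follow at once. The only difference is the termination device --- the paper uses the well-founded order on $\C(A)^*$ given by length first and then lexicographically (via $\gamma'<\gamma$ from Proposition \ref{gammadelta}(ii),(iii)), whereas your weighted measure $M=\sum_i i\,|\gamma_i|$, justified by the multiset identity $\gamma'\cup\delta'=\gamma\cup\delta$ and the strict inclusion $\gamma\subsetneq\gamma'$, is an equally valid and slightly more self-contained alternative.
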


Recall that a {\it rewriting system} on a free monoid $C^*$, generated by rules $u\to v$, is the least reflexive and transitive 
binary relation on $C^*$ which is compatible with left and right multiplication. It is {\it confluent} if the set of 
words $w$ which
may not be rewritten (that is, do not have as factor any word $u$ which is the left part of a rule) is a set of unique
representatives of the congruence generated by this binary relation.

\begin{proof} Consider the order $\leq $ on the set $\C(A)$ of columns, and then order lexicographically the words of equal 
length in the free monoid $C(A)^*$, then order this whole free monoid first by length, then lexicographically. We obtain an 
order on $\C(A)^*$, which is not total, but suffices for our purpose. If we use a rule 
$\gamma\delta\to\gamma'\delta'$ in a word $w$, obtaining $w'$, then either $w'$ is shorter than $w$ (in case $\delta'$ is 
the empty column); or $w'$ and $w$ have the same length, and $w$ is smaller for the previous lexicographic order, since $
\gamma'<\gamma$, by Proposition \ref{gammadelta} (ii) and (iii) (because we do not have $\gamma\leq \delta$). Hence 
$w'<w$. It follows that there is no infinite chain in the rewriting rule, since each such chain decreases for the order, and remains 
in the finite set of words of bounded length.

As a consequence, each word may be rewritten into a word $\gamma_1\cdots\gamma_n$ with $\gamma_1\leq \ldots\leq 
\gamma_n$. Since tableaux form a set of representatives of the plactic monoid and by Proposition \ref{gammadelta} (v), we 
obtain the theorem.
\end{proof}

For the interested reader, note that Bokut et al. give a formula in order to compute $\gamma'$ and $\delta'$, with the 
notations of Proposition \ref{gammadelta}: see \cite[Definition 4.6 and Lemma 4.7]{BCCL}. Note also that when we write $
\gamma_1 \leq \gamma_2$, they write $\gamma_ 1 \vartriangleright\gamma_2$ (and $\gamma_1\succeq\gamma_2$ in 
\cite{CGM}).

\section{Syntacticity}\label{synt}

The syntactic monoid and congruence of a language (= subset of a free monoid) are well-known notions (see for example 
\cite{P}). As is also well-known, they immediately extend to functions from a free monoid into any set, as follows. 

Let $f:A^*\to E$, where $E$ is any set. The {\it syntactic congruence of $f$}, denoted $\equiv_f$, is defined by
$$
u\equiv_f v \Leftrightarrow (\forall x,y\in A^*, f(xuy)=f(xvy)).
$$ 
It is a (two-sided) congruence of $A^*$, that is, an equivalence relation which is compatible with the product in $A^*$. It is 
the coarsest congruence $\equiv$ of $A^*$ which is compatible with $f$, that is, satisfying $u\equiv v\Rightarrow f(u)=f(v)$.  
The {\it syntactic monoid of $f$} is the quotient monoid $M_f=A^*/\equiv_f$. One has clearly $u\equiv_fv\Rightarrow 
f(u)=f(v)$, so that $f$ induces a function $g_f:M_f\to E$ such that $f=g_f\circ \mu$, with $\mu$ the canonical monoid 
homomorphism $A^*\to M_f$.

Similarly, the {\it left synctactic congruence of $f$}, denoted by $\equiv_f^l$, and defined by
$$
u\equiv_f^l v \Leftrightarrow (\forall x\in A^*, f(xu)=f(xv)).
$$
It is a left congruence of $A^*$, that is, compatible with multiplication at the left, and one therefore obtains a left action 
of $A^*$ onto the set $A^*/\equiv_f^l$. The syntactic left congruence of $f$ is the coarsest left congruence of $A^*$ which 
is compatible with $f$.

Both quotients have a universal property with respect to $f$, which we describe only for the syntactic monoid. Consider the 
category whose objects are the triples $M,\mu,g$, where $M$ is a monoid, $\mu$ a surjective monoid homomorphism $A^*\to 
M$, and $g:M\to E$ a function, such that $f=g\circ \mu$; in this case, we say that $M,\mu,g$ (or simply $M$) {\it 
recognizes} $f$. Morphisms of the category are defined as monoid 
homomorphisms $\nu:M\to M'$ such that $\mu'=\nu\circ\mu$ and $g'\circ\nu=g$.
The triple $M,\mu,g_f$ is an object in the category, and it is a final object in the category.
In that sense, we may say that ``$M_f$ is the smallest monoid recognizing $f$". 

\begin{theorem}\label{syntac} Consider the function $f$ which associates to $w\in A^*$ the maximum length of a strictly 
decreasing subsequence of $w$; equivalently (by Schensted's theorem) the length of the first column of $P(w)$.

(i) The syntactic left congruence of $f$ is determined by: $u\equiv_f^l v$ if and only if $u\cdot \emptyset=v\cdot \emptyset$ (where $\emptyset$ is 
the empty column).

(ii) The syntactic monoid of $f$ is $\Styl(A)$, and its syntactic congruence $\equiv_f$ coincides with $\equiv_{styl}$.
\end{theorem}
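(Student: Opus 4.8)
The plan is to reduce everything to a single injectivity statement about the action on columns. The starting point is that, by Schensted's theorem and Lemma \ref{first-column}(i), $f(w)$ equals the cardinality $|w\cdot 1|$ of the column $w\cdot 1$ (the common first column of $P(w)$ and $N(w)$); since the action is a left action, $f(xuy)=|x\cdot(u\cdot(y\cdot 1))|$ for all words $x,y$. So I would first isolate and prove the following \emph{Key Lemma}: the map $\gamma\mapsto\phi_\gamma$, where $\phi_\gamma(x)=|x\cdot\gamma|$, is injective on the set $\mathcal C(A)$ of columns.

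To prove the Key Lemma I would use explicit probe words. For a letter $c$, let $D_c$ be the strictly decreasing word formed by all letters of $A$ that are $\ge c$. Computing $D_c\cdot\gamma$ amounts to column-inserting these letters into $\gamma$ in increasing order; since a letter, once inserted, can only be bumped by the insertion of a strictly larger letter, and we insert in increasing order, every letter $\ge c$ survives, while by Lemma \ref{idemp}(ii) the part $\gamma_c$ of $\gamma$ strictly below $c$ is never disturbed (each inserted letter is $\ge c$). Hence $D_c\cdot\gamma=\{y\in A:y\ge c\}\cup\gamma_c$, so that $|D_c\cdot\gamma|=|\{y\in A:y\ge c\}|+|\gamma_c|$. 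Therefore $\phi_\gamma$ determines $|\gamma_c|$ for every $c$, and the function $c\mapsto|\gamma_c|$ determines $\gamma$ (it jumps by one exactly at the elements of $\gamma$); this gives injectivity.

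Granting the Key Lemma, both parts are formal. For (i), the implication $u\cdot1=v\cdot1\Rightarrow u\equiv_f^l v$ is immediate from $f(xu)=|x\cdot(u\cdot1)|$; conversely, if $u\cdot1\ne v\cdot1$ the Key Lemma produces an $x$ (indeed some $D_c$) with $f(xu)=|x\cdot(u\cdot1)|\ne|x\cdot(v\cdot1)|=f(xv)$, so $u\not\equiv_f^l v$. For (ii), the inclusion $\equiv_{styl}\subseteq\equiv_f$ holds because $u\equiv_{styl}v$ forces $xuy\equiv_{styl}xvy$, hence $(xuy)\cdot1=(xvy)\cdot1$ and $f(xuy)=f(xvy)$ for all $x,y$. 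For the reverse inclusion, fix a column $\gamma$ and set $y=r(\gamma)$, so that $y\cdot1=\gamma$ (by Lemma \ref{first-column}(i), since $P(r(\gamma))=\gamma$); if $u\equiv_f v$ then $|x\cdot(u\cdot\gamma)|=f(xuy)=f(xvy)=|x\cdot(v\cdot\gamma)|$ for every $x$, whence $u\cdot\gamma=v\cdot\gamma$ by the Key Lemma. As $\gamma$ is arbitrary, $u\equiv_{styl}v$; thus $\equiv_f=\equiv_{styl}$, and the syntactic monoid $A^*/\equiv_f$ is $\Styl(A)$.

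The main obstacle is the Key Lemma: one must recover the full set-theoretic data of a column from the single numerical quantity $|x\cdot\gamma|$. The device that overcomes it is the family of probes $D_c$, whose insertion flattens the top of the column into the fixed set $\{y\ge c\}$ while leaving $\gamma_c$ untouched, so that the cardinality alone reports $|\gamma_c|$. Once this formula is established, the two syntacticity statements follow by the routine manipulations above.
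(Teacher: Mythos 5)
Your proposal is correct in substance but takes a genuinely different route from the paper. The paper proves the converse half of (i) by separating two distinct columns $\gamma_1\neq\gamma_2$ \emph{adaptively}, via a three-case analysis (different lengths; equal lengths but different maximal letters, separated by inserting the larger maximal letter; equal maximal letters, where Lemma \ref{action1} supplies the probe $a_{n-1}\cdots a_2$ reducing to the previous case), and then deduces (ii) from (i) by the standard syntactic manipulation, using that every column has the form $y\cdot 1$. You instead prove a stronger, uniform statement: the function $x\mapsto|x\cdot\gamma|$ is a \emph{complete invariant} of the column, reconstructed via the fixed family of probes $D_c$, which flatten everything $\geq c$ into the constant set $\{y\in A: y\geq c\}$ while preserving $\gamma_c$. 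Your derivations of (i) and (ii) from this Key Lemma are then the same formal manipulations as the paper's (your choice $y=r(\gamma)$ plays exactly the role of the paper's remark that each column equals $y\cdot 1$). Your approach buys a cleaner conceptual statement and avoids the case analysis; the paper's avoids having to verify the probe formula.

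Two local slips need repair, though neither threatens the architecture. First, your bumping direction is reversed: in column insertion of $x$, the bumped letter is the \emph{smallest letter $\geq x$}, so a letter already present can only be bumped by inserting a \emph{weakly smaller} letter, not a strictly larger one as you wrote. Your conclusion then holds for the correct reason: inserting the letters $\geq c$ in increasing order, each new letter is strictly larger than all previously inserted ones and hence cannot bump them, so all of $\{y\geq c\}$ survives, while Lemma \ref{idemp}(ii) keeps $\gamma_c$ intact, giving $D_c\cdot\gamma=\{y\in A:y\geq c\}\cup\gamma_c$ as claimed. (Had your stated direction been true, increasing insertion order would be precisely the regime in which earlier letters get bumped, and the formula would fail.) Second, the data $c\mapsto|\gamma_c|$ for $c\in A$ does \emph{not} determine $\gamma$: it cannot detect whether $z=\max(A)$ lies in $\gamma$, since $|\gamma_c|$ only records letters of $\gamma$ strictly below $c$ — for instance $\gamma=\emptyset$ and $\gamma=\{z\}$ give $|\gamma_c|=0$ and indeed $|D_c\cdot\gamma|=|\{y\geq c\}|$ for every $c\in A$ in both cases. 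The fix is already in your hands: the empty-word probe gives $\phi_\gamma(1)=|\gamma|$, and the value $|\gamma|$ together with all the $|\gamma_c|$ does determine $\gamma$ (membership of $z$ is read off from $|\gamma|-|\gamma_z|$, and the remaining letters from the jumps). With these two corrections your proof is complete.
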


\begin{lemma}\label{action1}
Let $n\geq 2$ and letters $a_n>\ldots>a_2>a_1$ and $u$ a word such that $a_n\cdots a_3a_1u$ is a strictly decreasing word. 
Viewing columns as strictly decreasing words, let $\gamma=a_n\cdots a_3a_1u, \gamma'=a_{n-1}\cdots a_1u$; then
$$a_{n-1}\cdots a_2\cdot \gamma=\gamma'.$$
\end{lemma}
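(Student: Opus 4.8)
The plan is to compute $a_{n-1}\cdots a_2\cdot\gamma$ directly, tracing the column insertions one letter at a time, and to package the bookkeeping as a short induction. Recall from Section \ref{insert} that the action of a word inserts its letters from right to left, so to evaluate $a_{n-1}a_{n-2}\cdots a_2\cdot\gamma$ I first insert $a_2$, then $a_3$, and so on, finishing with $a_{n-1}$. Throughout I view columns as subsets of $A$; since $\gamma=a_n\cdots a_3a_1u$ is strictly decreasing, every letter of $u$ lies strictly below $a_1$, so $\Supp(u)$ stays at the bottom and never interacts with the letters $a_i$.

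The key observation, which I would isolate as an inductive invariant, is as follows. Set $C_1=\gamma$ and, for $2\le k\le n-1$, let $C_k$ be the column obtained after inserting $a_2,a_3,\dots,a_k$ (in that order) into $\gamma$. I claim that
$$
C_k=\{a_n,a_{n-1},\dots,a_{k+2}\}\cup\{a_k,a_{k-1},\dots,a_1\}\cup\Supp(u).
$$
This formula also holds for $k=1$, where it reduces to $\gamma=\{a_n,\dots,a_3,a_1\}\cup\Supp(u)$, and for $k=n-1$ it reads $C_{n-1}=\{a_{n-1},\dots,a_1\}\cup\Supp(u)=\gamma'$, which is exactly the desired conclusion.

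The inductive step is a single application of the definition of column insertion. Assuming $C_{k-1}=\{a_n,\dots,a_{k+1}\}\cup\{a_{k-1},\dots,a_1\}\cup\Supp(u)$, I insert $a_k$. Since $a_k\notin C_{k-1}$, the letters of $C_{k-1}$ that are $\ge a_k$ are exactly $a_{k+1},\dots,a_n$ (the letters $a_{k-1},\dots,a_1$ and those of $u$ are all strictly smaller), and their minimum is $a_{k+1}$. Hence $a_{k+1}$ is bumped and $a_k$ inserted, producing $C_k$. In particular we are always in the second case of column insertion, since $a_{k+1}>a_k$ is present in $C_{k-1}$ and is genuinely bumped.

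The computation carries no real obstacle; the only points requiring care are the insertion convention (last letter first) and keeping track of the fact that $a_2$ is \emph{absent} from $\gamma$ while $a_n$ is \emph{present}, the net effect of the whole process being precisely to delete $a_n$ and insert $a_2$. I would double-check the terminal index $k=n-1$ separately, where the top block $\{a_n,\dots,a_{k+2}\}$ is empty and $a_n$ itself is the letter being bumped, confirming that the final column is $\gamma'$.
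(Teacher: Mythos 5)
Your proof is correct and is in essence the same as the paper's: both arguments come down to the single observation that inserting $a_k$ into a column that contains $a_{k+1},\dots,a_n$ but not $a_k$ (all other letters being $<a_k$) bumps exactly $a_{k+1}$, and both respect the right-to-left insertion convention. The paper merely packages your letter-by-letter trace as an induction on $n$, applying the lemma with relabeled letters to $a_2\cdot\gamma=a_n\cdots a_4a_2a_1u$, whereas you unroll the same computation with an explicit closed-form invariant for the intermediate columns $C_k$ --- a purely organizational difference.
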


\begin{proof} For $n=2$, the equality is $1\cdot a_1u=a_1u$, which is true. Suppose that $n\geq 3$. Let $v=a_1u$. By induction, 
$a_{n-1}\cdots a_3\cdot a_n\ldots a_4 a_2v=a_{n-1}\cdots a_2v$. We have $a_2\cdot a_n\cdots a_3a_1u=a_n\cdots 
a_4a_2a_1u$. By the previous equality, we obtain therefore
$$
a_{n-1}\cdots a_2\cdot \gamma=a_{n-1}\cdots a_2\cdot a_n\cdots a_3a_1u=(a_{n-1}\cdots a_3)\cdot(a_2\cdot a_n\cdots a_3a_1u)
$$
$$
=(a_{n-1}\ldots a_3)\cdot a_n\ldots a_4a_2a_1u=a_{n-1}\cdots a_2a_1u,
$$
which was to be shown.
\end{proof}

\begin{proof}[Proof of Theorem \ref{syntac}] (i) If $u\cdot\emptyset= v\cdot \emptyset$, then for any word $x$, $xu\cdot\emptyset=xv\cdot \emptyset$ and 
therefore $f(xu)=f(xv)$, since $w\cdot \emptyset$ is the first column of $P(w)$ by Proposition \ref{first}, and $f(w)$ is its length.
Thus $u\equiv_f^l v$.

Conversely, suppose that $\gamma_1=u\cdot \emptyset\neq v\cdot \emptyset=\gamma_2$. In order to show that $u,v$ are not 
equivalent modulo $\equiv_f^l$, it is 
enough to show the existence of a word $x$ such that $f(xu)\neq f(xv)$, that is: the first columns of $P(xu)$ and $P(xv)
$ have different lengths. We 
know by Proposition \ref{first} that these columns are $xu\cdot \emptyset$ and $xv\cdot \emptyset$, equivalently $x\cdot\gamma_1$ and 
$x\cdot \gamma_2$.

If the two columns $\gamma_1,\gamma_2$ have different length, we take $x=1$.
Suppose now that they have the same length. If their largest letter are distinct, we may assume that it is $a$ for $
\gamma_1$ and $b$ for $\gamma_2$ 
and $a<b$; then $b\cdot \gamma_1=\gamma_1\cup b$ and $b\cdot\gamma_2=\gamma_2$ (since $b$ appears in $
\gamma_2$) and these columns have 
different lengths: we then take $x=b$. If their largest letters are equal, we may write (for example)
$\gamma_1=a_n\cdots a_3a_1s$, $\gamma_2=a_n\cdots a_3a_2t$, with $n\geq 2$, $a_n>\cdots >a_3>a_2>a_1$, and $s,t$ of the same length; let 
$w=a_{n-1}\cdots a_2$; then by Lemma \ref{action1}, $w\cdot\gamma_1=a_{n-1}\cdots a_1s$ and $w\cdot \gamma_2=\gamma_2=a_n\cdots 
a_3a_2t$ (by Lemma \ref{idemp} (i), since $\Supp(w)\subseteq \Supp(\gamma_2)$); these two columns have distinct largest letters, and we are 
reduced to the previous case.

(ii) The argument we use now is standard in algebraic automata theory. We have $u\equiv_f v\Leftrightarrow 
(\forall x,y\in A^*, f(xuy)=f(xvy))
\Leftrightarrow (\forall y\in A^*, uy\equiv_f^l vy) 
\Leftrightarrow (\forall y\in A^*, uy\cdot \emptyset=uy\cdot \emptyset)$ (by (i))
$\Leftrightarrow (\forall \gamma \in\C(A), 
u\cdot\gamma=v\cdot\gamma) $ (since $(wy)\cdot\emptyset=w\cdot(y\cdot \emptyset)$ and since each column is of the form $y\cdot \emptyset$)
$\Leftrightarrow u\equiv_{styl}v$.
\end{proof}






\section{Appendix: a theorem of Lascoux and Sch\"utzenberger}

In \cite[Théorème 2.15 p.136]{LS1}, Lascoux and Schützenberger state that the plactic congruence is the syntactic monoid of the function $\lambda$ which associates with each word $w$ the shape (``forme immanente" in their article) of the tableau $P(w)$. Equivalently: $u\equiv_{plax} v\Leftrightarrow (\forall x,y\in A^*, \lambda(xuy)=\lambda(xvy))$ (see Section \ref{synt} for the definitions about syntacticity).
Their theorem is given without proof, and we provide a proof below, and a generalization. 

Denote by $\Lambda$ the set of all integer partitions. The {\it shape} of a tableau is the partition whose parts are the lengths of its rows.

\begin{theorem}\label{plactic-synt} The left syntactic congruence $\equiv_\lambda^l$ of the function $\lambda:A^*\to\Lambda$, which associates with each word the shape $\lambda(w)$ of the tableau $P(w)$, is the plactic congruence.
\end{theorem}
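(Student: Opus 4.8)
The plan is to establish the two inclusions $\equiv_{plax}\subseteq\equiv_\lambda^l$ and $\equiv_\lambda^l\subseteq\equiv_{plax}$ separately. The first is immediate: the shape function factors as $\lambda=\lambda\circ P$, so it is constant on plactic classes, and $\equiv_{plax}$ is a two-sided (in particular left) congruence compatible with $\lambda$ in the sense $u\equiv_{plax}v\Rightarrow\lambda(u)=\lambda(v)$. Since $\equiv_\lambda^l$ is by definition the coarsest left congruence compatible with $\lambda$, it contains every such congruence, hence contains $\equiv_{plax}$. Thus $u\equiv_{plax}v\Rightarrow u\equiv_\lambda^l v$, and the whole content of the theorem lies in the reverse inclusion.

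For the converse I must show that if $u\equiv_\lambda^l v$, that is $\lambda(xu)=\lambda(xv)$ for every $x\in A^*$, then $P(u)=P(v)$. Writing $T=P(u)$ and recalling that $P(xu)$ is obtained by column insertion of $x$ into $T$, this amounts to proving that the response function $x\mapsto\lambda(xu)$ determines the tableau $T$. Denote the columns of $T$, from left to right, by $\gamma_1\le\cdots\le\gamma_m$. The basic structural remark is that column insertion bumps letters only to the right, so during the insertion of a word $x$ into $T$ the successive contents of the first $k$ columns depend only on $(\gamma_1,\dots,\gamma_k)$ and on $x$, never on the columns of index $>k$ (what leaves column $k$ to the right never returns). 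Consequently $h_k(x):=\sum_{j\le k}\lambda(xu)'_j$, the number of cells lying in the first $k$ columns of $P(xu)$, is a function of $(\gamma_1,\dots,\gamma_k)$ and $x$ alone, and it is read off from the known partition $\lambda(xu)$ through its conjugate. This lets me recover the columns of $T$ one at a time.

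I would recover $\gamma_1,\gamma_2,\dots$ by induction on $k$. Assume $\gamma_1,\dots,\gamma_{k-1}$ already known. Then $x\mapsto h_k(x)-h_{k-1}(x)$ is the height of the $k$-th column of $P(xu)$, depending only on $(\gamma_1,\dots,\gamma_k)$, and I claim it determines $\gamma_k$. For $k=1$ this height equals $|x\cdot\gamma_1|$ by Lemma \ref{first-column}, and the fact that $x\mapsto|x\cdot\gamma_1|$ separates distinct columns is exactly the nontrivial direction of Theorem \ref{syntac}(i). For the inductive step I would track the word $\psi(x)$ formed by the letters successively bumped out of column $k-1$ into column $k$ during the insertion of $x$ into the fixed prefix $(\gamma_1,\dots,\gamma_{k-1})$; the $k$-th column of $P(xu)$ is then $\psi(x)\cdot\gamma_k$, of height $|\psi(x)\cdot\gamma_k|$, so the problem reduces formally to the case $k=1$ applied to the word $\psi(x)$ acting on $\gamma_k$.

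The main obstacle is precisely this last reduction: I must show that, as $x$ ranges over $A^*$, the cascade $\psi(x)$ realizes a rich enough family of words acting on $\gamma_k$ — in particular the separating words produced by Lemma \ref{action1} — so that two distinct candidate columns $\gamma_k\neq\gamma_k'$ are distinguished by some value $h_k(x)$. Engineering $x$ so that a prescribed sequence of letters is bumped through the already known columns $\gamma_1,\dots,\gamma_{k-1}$ and arrives at column $k$ is a controlled \emph{reverse bumping} problem, and is the genuine analogue, one column deeper, of the column computation carried out in Lemma \ref{action1} and Proposition \ref{gammadelta}; this is where the work concentrates. Once it is settled, the induction recovers every column of $T$, hence $T$ itself, giving $P(u)=P(v)$ and therefore $u\equiv_{plax}v$, which completes the reverse inclusion.
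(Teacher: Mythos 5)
Your easy inclusion $\equiv_{plax}\subseteq\equiv_\lambda^l$ is correct and matches the paper, and your structural observations are sound: bumping in column insertion only moves rightward, so the first $k$ columns of $P(xu)$ depend only on $x$ and $(\gamma_1,\dots,\gamma_k)$; the quantity $h_k(x)$ is indeed read off the conjugate of $\lambda(xu)$; and the base case $k=1$ of your reconstruction is exactly the hard direction of Theorem \ref{syntac}(i). But the proof has a genuine, and by your own admission unfilled, gap at the inductive step: you never prove that the cascade words $\psi(x)$ emerging from column $k-1$ form a rich enough family to separate two candidate columns $\gamma_k\neq\gamma_k'$. This is not a routine verification that can be deferred. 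The words realizable as cascades out of a fixed prefix are genuinely constrained: each letter entering column $k$ is the endpoint of a weakly increasing chain of bumps originating from a letter of $x$, and the prefix columns mutate as $x$ is inserted, so the reachable set of words $\psi(x)$ depends delicately on $(\gamma_1,\dots,\gamma_{k-1})$. In particular the separating words supplied by Lemma \ref{action1} are built from the letters of the target column itself, with no guarantee that they (or adequate substitutes) occur as cascades; so the claim that ``the problem reduces formally to the case $k=1$'' is precisely what needs proof, and since everything before it is soft bookkeeping, this unproved reverse-bumping claim carries the entire weight of the theorem.

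For comparison, the paper avoids column-by-column reconstruction altogether. Taking $n$ minimal such that the $n$-th columns of $P(u)$ and $P(v)$ differ, and $a<b$ the first letters distinguishing them, it sets $y$ equal to the strictly decreasing word of all letters $\geq b$ and left-multiplies by the single word $x=y^n$. The point is a closed-form lemma: writing each column $\gamma_i=u_iv_i$ with $u_i$ the letters $\geq b$ and $v_i$ the letters $<b$, the column representative of $y^nw$ is $(yv_1)\cdots(yv_n)v'_{n+1}\cdots$. Flooding with $y^n$ saturates the first $n$ columns with all letters $\geq b$, so the cascade becomes completely computable, and the $n$-th columns of $P(y^nu)$ and $P(y^nv)$ get heights $|y|+|v_n|$ and $|y|+|v_n|-1$ respectively, whence $\lambda(y^nu)\neq\lambda(y^nv)$. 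If you wish to salvage your induction, the natural repair is to replace arbitrary cascade engineering by this one flooding word --- but doing so essentially reproduces the paper's argument rather than completing yours.
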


The theorem hold also for the right syntactic congruence, as follows from the application of the anti-automorphism $\theta$.

\begin{corollary}(Lascoux and Schützenberger) The plactic congruence is the syntactic congruence of the function $\lambda$.
\end{corollary}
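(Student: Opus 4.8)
The plan is to prove the two inclusions $\equiv_{plax}\;\subseteq\;\equiv_\lambda^l$ and $\equiv_\lambda^l\;\subseteq\;\equiv_{plax}$ separately. The first is immediate: if $u\equiv_{plax}v$ then $P(xu)=P(x)\cdot P(u)=P(x)\cdot P(v)=P(xv)$ in the plactic monoid for every $x\in A^*$, since $P$ is a monoid homomorphism; hence $\lambda(xu)=\lambda(xv)$ and $u\equiv_\lambda^l v$. The whole content is the converse, which I would phrase as a reconstruction statement: the tableau $P(u)$ is determined by the function $\Phi_u\colon x\mapsto\lambda(xu)$. Note first that $\lambda$ determines its own number of parts, which is the length of the first column of $P(w)$, i.e.\ the function $f$ of Theorem~\ref{syntac}; hence $\equiv_\lambda^l$ refines $\equiv_f^l$, and Theorem~\ref{syntac}(i) already gives that $u\equiv_\lambda^l v$ forces $u\cdot 1=v\cdot 1$, so $P(u)$ and $P(v)$ have the same first column. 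This is the seed of the reconstruction, and I would run the remainder by induction on $|A|$, removing the largest letter $z=\max A$.

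The inductive machine rests on two facts. The first is the plactic analogue of Lemma~\ref{removing}(iii): writing $T\setminus z$ for the tableau obtained by deleting all boxes labelled $z$ (a horizontal strip, as $z$ is maximal), one has $P(w\setminus z)=P(w)\setminus z$; this is the clean, slide-free peeling available for the largest letter, and I would prove it by a short induction on $|w|$ as in the stylic case. The second, and crucial, fact is a \emph{strip lemma}: the skew shape $\sigma_z(w):=\lambda(w)\big/\lambda(w\setminus z)$ of the $z$-boxes of $P(w)$ is already determined by $\Phi_w$. Granting the strip lemma, the induction closes as follows. For $x\in(A\setminus z)^*$ we have $x(u\setminus z)=(xu)\setminus z$, so by peeling $\lambda\bigl(x(u\setminus z)\bigr)=\lambda(xu)\setminus\sigma_z(xu)$; the first factor is $\Phi_u(x)$ and, by the strip lemma applied to $xu$, the second is a function of $\Phi_{xu}$, hence of $\Phi_u$. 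Thus $\Phi_{u\setminus z}$ is entirely determined by $\Phi_u$. Since $\Phi_u=\Phi_v$, we get $\Phi_{u\setminus z}=\Phi_{v\setminus z}$ over the smaller alphabet $A\setminus z$, so by induction $P(u\setminus z)=P(v\setminus z)$; the strip lemma with $x=1$ gives $\sigma_z(u)=\sigma_z(v)$; and a semistandard tableau is recovered uniquely from its $z$-deletion together with the horizontal strip carrying the $z$'s, whence $P(u)=P(v)$. The base case $|A|=1$ is trivial ($\lambda$ is the word length), and the statement for the \emph{right} syntactic congruence then follows by applying the anti-automorphism $\theta$, using Eq.(\ref{theta-A-z}), exactly as in the remark following the theorem.

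The strip lemma is the main obstacle, and I would attack it by prepending powers of the maximal letter and tracking shapes. Column insertion of a single $z$ into a tableau adds exactly one box, placed on top of the leftmost column not already capped by $z$ (the maximal letter either caps an empty slot or cascades rightward, bumping copies of $z$), so the increasing sequence of shapes $\lambda(w)\preceq\lambda(zw)\preceq\lambda(z^2w)\preceq\cdots$ is read off $\Phi_w$ and records these successive placements; moreover $(z^kw)\setminus z=w\setminus z$ for all $k$, so every increment box is a $z$ lying above the fixed core $\lambda(w\setminus z)$. The task is to show that this sequence of shapes pins down the core $\lambda(w\setminus z)$ — equivalently the strip $\sigma_z(w)=\lambda(w)\big/\lambda(w\setminus z)$ — which I expect to establish either by identifying $\lambda(w\setminus z)$ as the stable ``non-$z$'' part that persists under the saturation dynamics of prepending $z$'s, or, more invariantly, by Greene's theorem, expressing $\lambda(w\setminus z)$ through maximal unions of weakly increasing subsequences of $w$ that avoid $z$ and relating these to the Greene invariants of the $z^k w$. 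Making this recovery unambiguous — turning the coarse shape data of left products into the exact location of the top strip — is precisely where the work lies and generalises the role played by Lemma~\ref{action1} in the proof of Theorem~\ref{syntac}.
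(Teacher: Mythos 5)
Your architecture is sound and genuinely different from the paper's, but as written the proof is incomplete: the \emph{strip lemma} — which you yourself flag as ``precisely where the work lies'' — is the entire content of the converse inclusion, and you sketch two candidate strategies (saturation dynamics, Greene invariants) without carrying out either. The good news is that the capping dynamics you already stated correctly closes the gap in a few lines, with no need for Greene's theorem. Each prepended $z$ caps the leftmost uncapped column, and a new column (created capped) appears only once \emph{all} existing columns are capped; since $P(w)$ has at most $|w|$ columns, after $k=|w|$ prepended $z$'s every column of $P(z^kw)$ is capped, i.e.\ the $z$'s of $P(z^kw)$ are exactly the top boxes of its columns. As $(z^kw)\setminus z=w\setminus z$, your peeling lemma then gives that $\lambda(w\setminus z)$ is obtained from $\lambda(z^{|w|}w)=\Phi_w(z^{|w|})$ by deleting the top box of every column — manifestly a function of $\Phi_w$ (note $|w|$ itself is $|\Phi_w(1)|$) — and hence $\sigma_z(w)=\lambda(w)\big/\lambda(w\setminus z)$ is determined as well. (Equivalently: the increment boxes from $\lambda(z^{k-1}w)$ to $\lambda(z^kw)$ land successively atop the originally uncapped columns, left to right, so the capped columns of $P(w)$ — i.e.\ the strip — are read off as the columns of $\lambda(w)$ that never receive an increment.) With this inserted, your induction on $|A|$ is complete, and the two-sided corollary follows from the left-handed theorem by specializing $y=1$, exactly as in the paper.

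For comparison, the paper proves Theorem~\ref{plactic-synt} by a direct separation argument rather than a reconstruction: given $w\not\equiv_{plax}w'$, it takes the column representatives $\gamma_1\cdots\gamma_N$, locates the first index $n$ where the columns differ, with first differing letters $a<b$, and prepends $y^n$, where $y$ is the strictly decreasing word of all letters $\geq b$; its lemma computing the column representative of $y^nw$ shows the $n$-th columns of $P(y^nw)$ and $P(y^nw')$ have different lengths, so $x=y^n$ separates. Your $z^k$-prepending is in spirit the special case $b=z$ of this device, but deployed differently: instead of separating two given inequivalent words, you show $\Phi_u$ determines $P(u)$ outright, by induction on the alphabet with the maximal letter peeled off. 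The paper's route is shorter once its column-representative lemma is in hand and exhibits an explicit separating context; yours is arguably more conceptual and rests only on two standard facts (restriction to an initial alphabet interval commutes with insertion, plus the capping dynamics), at the price of the double structure of induction and strip recovery — and it is only a complete proof once the recovery step above is actually written out.
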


\begin{proof} If $u\equiv_{plax}v$, then $xuy\equiv_{plax}xvy$, hence $P(xuy)=P(xvy)$, for any words $x,y$. Thus $
\lambda(xuy)=\lambda(xvy)$. Conversely, if $\forall x,y\in A^*, \lambda(xuy)=\lambda(xvy)$, then in particular $
\forall x\in A^*, \lambda(xu)=\lambda(xv)$; hence $u\equiv_\lambda^l v$ and therefore $u\equiv_{plax}v$ by 
Theorem \ref{plactic-synt}.
\end{proof}

Recall that each plactic equivalence class contains a unique representative which is a product of columns $$w=\gamma_1\cdots \gamma_N, \gamma_1\leq \cdots\leq \gamma_N,$$ where $\leq$ is the order on columns of Section \ref{order}.
This follows from Section \ref{plactic}, by considering the column-word of a tableau. We call the {\it column representative} this representative of the plactic 
class.

\begin{lemma}  Let $w$ be as above. 
Let $b\in A$ and $y$ the strictly decreasing word involving all letters $\geq b$ in $A$. Let $n\leq N$.
Write $\gamma_i=u_iv_i$, where $u_i$ involves only letters $\geq b$, and $v_i$ only letters $<b$.
Then the column representative of the plactic class of $y^nw$ is
$$
m=(yv_1)\cdots(yv_n) v'_{n+1}v'_{n+2}\cdots.
$$
\end{lemma}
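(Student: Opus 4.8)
I would work with the confluent column rewriting of Theorem \ref{quadr-pres}: the column representative of a plactic class is the unique normal form $\delta_1\cdots\delta_M$ with $\delta_1\leq\cdots\leq\delta_M$, obtained by iterating the rule $\gamma\delta\to\gamma'\delta'$ of Proposition \ref{gammadelta} whenever $\gamma\not\leq\delta$. The plan is to isolate the effect of left-multiplying a single column by $y$, and then run an induction on $n$ that peels off the copies of $y$ one at a time.

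\textbf{Two local moves.} For a column $\gamma=uv$ (so $u$ collects its letters $\geq b$ and $v$ its letters $<b$) I would first prove the absorption identity $y\gamma\equiv_{plax}(yv)u$, where $yv$ and $u$ are columns with $yv\leq u$. This is Proposition \ref{gammadelta} applied to the pair $(y,\gamma)$: the crucial point is that $y\cdot\gamma=yv$, because column-inserting a letter $\geq b$ can only bump a letter $\geq b$, so the small part $v$ is never disturbed, while inserting all of $y$ (which already contains $u$) saturates the big part to the full set of letters $\geq b$. Hence $\gamma'=y\cup v=yv$ and $\delta'=(y\uplus u\uplus v)\setminus(y\uplus v)=u$; and $yv\supseteq u$ gives $yv\leq u$ since $\leq$ extends reverse inclusion. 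The special case $u=y$ gives the pass-through move $y(yv')\equiv_{plax}(yv')\,y$: inserting $y$ to the left of an already saturated column fixes that column and re-emits a full $y$ to its right.

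\textbf{Induction.} I would show by induction on $n\leq N$ that
\[
y^n\gamma_1\cdots\gamma_N\;\equiv_{plax}\;(yv_1)\cdots(yv_n)\,\rho_n ,
\]
where $\rho_n$ is the column representative of the residual word $u_1\cdots u_n\gamma_{n+1}\cdots\gamma_N$; its columns are the $v'_{n+1},v'_{n+2},\dots$ of the statement. For the step I write $y^{n+1}\cdots=y\,(y^n\cdots)\equiv y\,(yv_1)\cdots(yv_n)\rho_n$ and push the leading $y$ through the $n$ saturated columns by the pass-through move, reaching $(yv_1)\cdots(yv_n)(y\rho_n)$. It then remains to see $y\rho_n\equiv_{plax}(yv_{n+1})\rho_{n+1}$. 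Here I use that big-letter insertions never relocate small letters, so the small sub-tableau of $\rho_n$ coincides with that of $\gamma_{n+1}\cdots\gamma_N$; in particular the leftmost column of $\rho_n$ has small part $v_{n+1}$, and the absorption identity peels off precisely $(yv_{n+1})$. The leading columns then satisfy $yv_1\leq\cdots\leq yv_n\leq v'_{n+1}$: each such inequality reduces, via the identity on the (full) big letters together with the fact that $\leq$ extends reverse inclusion, to the regressive injection among small parts $v_{i+1}\hookrightarrow v_i$ supplied by $\gamma_i\leq\gamma_{i+1}$ in the original tableau; so the right-hand side is a genuine column representative.

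\textbf{Main obstacle.} The delicate step is $y\rho_n\equiv_{plax}(yv_{n+1})\rho_{n+1}$, that is, controlling how the bumped big columns $u_1,\dots,u_n$ cascade and recombine inside the residual. Unwinding the plactic classes with $\gamma_{n+1}=u_{n+1}v_{n+1}$, this reduces to the commutation $y\,U\,v_{n+1}\equiv_{plax}(yv_{n+1})\,U$ for a product $U$ of big columns and a small column $v_{n+1}$ --- the assertion that the small part can be slid to the front past the entire big block. I would prove it by the same bumping analysis as the absorption identity (the letters of $v_{n+1}$, being below $b$, sit at the bottom and are untouched by $U$), and then conclude by the uniqueness of normal forms in Theorem \ref{quadr-pres}, matching contents to identify the normalized tail with $\rho_{n+1}$. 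Establishing this commutation, together with the $b$-split bookkeeping showing that the tail is \emph{exactly} $v'_{n+1}v'_{n+2}\cdots$ and not merely a word of the correct content, is where the real work lies.
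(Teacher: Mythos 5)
Your proposal takes a genuinely different route from the paper's, and its skeleton is sound. The paper's own proof is a short simulation argument: it first records the identity $y^n\equiv_{plax}\prod_{t\geq b}t^n$ (decreasing product of $n$-th powers), both sides being the column- and row-readings of the rectangular tableau with $n$ columns equal to $y$, and then performs Schensted column insertion of $b^n$, then of the next letter's $n$-th power, and so on, into $P(w)$: each block $t^n$ deposits one $t$ into each of the first $n$ columns (an inserted $t$ bumps the $t$ already placed and travels one column to the right), while letters $\geq b$ never bump letters $<b$, so the small parts $v_1,\dots,v_n$ stay put and the first $n$ columns saturate to $yv_1,\dots,yv_n$ all at once. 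Your one-$y$-at-a-time induction through the rewriting system of Theorem \ref{quadr-pres}, with the absorption move $y\gamma\equiv_{plax}(yv)u$ and the pass-through $y(yv')\equiv_{plax}(yv')y$ (both correctly extracted from Proposition \ref{gammadelta}, since indeed $y\cdot\gamma=yv$), is heavier, but it buys something the paper does not state: a precise identification of the tail as the column normal form of $u_1\cdots u_n\gamma_{n+1}\cdots\gamma_N$ --- the paper's $v'_{n+1}v'_{n+2}\cdots$ is never actually defined, and its application in Theorem \ref{plactic-synt} uses only the first $n$ columns. Your ordering check (regressive injections restricted to small letters) is also fine, though uniqueness of normal forms makes it automatic once $m$ is known to be a column word of a tableau.

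The one place where your argument is not yet a proof is exactly the step you flag: the commutation $yUv\equiv_{plax}(yv)U$ for a big word $U$ and small column $v$ (note $(yv)U$ and $yvU$ are literally the same word, since $yv$ is itself strictly decreasing). The claim is true, but ``the letters of $v$ sit at the bottom and are untouched by $U$'' is not an argument: in $P(yvU)$ the big letters do enter the low rows to the right of the small letters, so the two sides do not match box-by-box for free. The bumping analysis that closes it is this. Writing $y_1<y_2<\cdots$ for the letters of $y$, since $y\cdot\kappa=y$ for every big column $\kappa$, the first column of $P(yU'')$ equals $y$ for every big word $U''$; consequently (a) row-inserting a big letter into such a tableau never bumps a first-column entry (the letter entering row $i$ exceeds $y_{i-1}$, hence is $\geq y_i$), so the whole cascade lives in columns $\geq 2$ and never lengthens column $1$; and (b) row-inserting a small letter bumps the $(1,1)$ entry and cascades straight up the first column, lengthening it by one box and touching no other column. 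From (a) and (b), by induction on the letters of $U$, both $P(yUv)$ and $P(yvU)$ equal the tableau whose first column is $yv$ and whose remaining columns are those of $P(yU)$, and uniqueness of normal forms finishes your induction step. With that lemma supplied your proof is complete; it is worth noting that the paper's identity $y^n\equiv_{plax}\prod_{t\geq b}t^n$ sidesteps this cascade bookkeeping entirely by inserting all $n$ copies of each letter value simultaneously.
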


\begin{proof} Note the identity $y^n\equiv_{plax}\prod_{t\in A,t\geq b}t^n$, where the product is strictly decreasing from the largest letter in $A$ until $b$: this identity is true because the two sides are the column- and row-words of the rectangular tableau with $n$ columns, all equal to $y$. Note that the left product by $y^n$ (equivalent to Schensted  left insertion of $y^n$) does not change the letters $\leq b$ in the columns; moreover the product by $b^n$ introduces a $b$ in the $n$ first columns, the product by $c^n$ (with $c$ the next letter in $A$) introduces a $c$ in them, and so on. Finally, these columns contain all the letters $\geq b$, which proves the lemma.
\end{proof}

\begin{proof}[Proof of Theorem \ref{plactic-synt}] If $w\equiv_{plax}w'$, we have for any word $x$, $xw\equiv_{plax}xw'$, and therefore $P(xw)=P(xw')$ and $\lambda(xw)=\lambda(xw')$.

Conversely, suppose that $w,w'$ are not equivalent modulo the plactic relation. Then for some $n\geq 1$, for $i=1,\ldots, 
n-1$, the $i$-th columns of $P(w)$ and $P(w')$ are equal, and their $n$-th columns differ. If their $n$-th columns have 
different heights, then $\lambda(w)\neq \lambda(w')$ and we choose $x=1$. If their heights are equal, let $a<b$ be the 
first letters distinguishing these columns, from left to right (columns being viewed as strictly decreasing words): $a$ appears in the $n$-th 
column of $w$, and $b$ in the $n$-th column of $w'$, and the letters at the left of $a$ (in $w$) and $b$ (in $w'$) in the two 
$n$-th columns are equal.

Then the plactic classes of $w$ and $w'$ have respectively columns representations of the form given in the displayed 
equation before the lemma (with primes for $w'$), and $\gamma_i=\gamma_i'$ for $i=1,\ldots,n-1$. We  may write $\gamma_i=u_iv_i=\gamma'_i$, $i=1,\ldots,n-1$, where $u_i$ involves only letters $\geq b$, and $v_i$ only letters $<b$. Moreover, by what has been said above, $\gamma_n=u_nv_n=u_nas_n$, $\gamma'_n=u_nbv'_n$, where $u_n$ involves only letters $>b$ and $v_n,v'_n$ only letters $<b$; moreover, $|v'_n|=|v_n|-1$. 
Then by the lemma, 
the column representatives of $y^nw$ and $y^nw'$ are respectively
$$
(yv_1)\cdots(yv_n) \cdots
$$
and 
$$
(yv'_1)\cdots(yv'_n) \cdots
$$
Then the $n$-th column of $P(y^nw)$ is longer than the $n$-th column of $P(y^nw')$. Thus $\lambda(y^nw)\neq \lambda (y^nw')$, and we take $x=y^n$.
 \end{proof}


\medskip

{\bf Acknowledgments} 

Special thanks are due to Jean-Eric Pin. He let run his software {\it Semigroupe} \cite{FP} on 
some examples we sent to him, and the tables he gave to us not only led us to the cardinality conjecture, but also to 
useful data on the stylic monoid. Note that hand calculations are almost impossible, except when $A$ is very small, since the number of states of the automaton for $|A|=n$ is $2^n$, the number of columns of $A$. His help was crucial.

We thank also Hugh Thomas for a useful mail exchange on the action of the plactic monoid on rows and columns.

We specially thank Darij Grinberg, who provided us with a long list of typos, and sloppiness in some proofs, and communicated us a new simpler proof of Proposition \ref{increasing} (ii), using Lemma \ref{LL} due to him.

Finally, we than the anonymous referee; he read thoroughly the two versions of this article, gave us a long list of typos, suggested changes, and found some gaps in proofs (in particular in the proof of Proposition \ref{zero}) and algorithms.

This work was partially supported by NSERC, Canada.

\end{document}